\begin{document}
\bibliographystyle{plain}

\title{Kauffman bracket intertwiners and the volume conjecture}

\author[Zhihao Wang]{Zhihao Wang}
\address{Zhihao Wang, School of Physical and Mathematical Sciences, Nanyang Technological University, 21 Nanyang Link Singapore 637371}
\email{ZHIHAO003@e.ntu.edu.sg}

\keywords{Volume conjecture, skein algebra, representation theory}

 \maketitle


\newtheorem{thm}{Theorem}[section]    
\newtheorem{lmm}[thm]{Lemma}          
\newtheorem{conj}[thm]{Conjecture}  
\newtheorem{cor}[thm]{Corollary}    
\newtheorem{prop}[thm]{Proposition}   
\newtheorem{rem}[thm]{Remark}    
\newtheorem{exam}[thm]{Example}    
%
\theoremstyle{definition}
\newtheorem{defn}[thm]{Definition}    

\def \cb {\color{blue}}
\def \cred {\color{red}}
\def \cbf {\color{blue}\bf}
\def \credf {\color{red}\bf}
\definecolor{ligreen}{rgb}{0.0, 0.3, 0.0}
\def \cg {\color{ligreen}}
\def \cgf {\color{ligreen}\bf}
\definecolor{darkblue}{rgb}{0.0, 0.0, 0.55}
\def \dbf {\color{darkblue}\bf}
\definecolor{anti-flashwhite}{rgb}{0.55, 0.57, 0.68}
\def \afw {\color{anti-flashwhite}}


\def \ri {{\rm i}}
\newcommand{\bs}[1]{\boldsymbol{#1}}

\begin{abstract}The volume conjecture relates the quantum invariant and the hyperbolic geometry.
Bonahon-Wong-Yang introduced a new version of the volume conjecture by using the intertwiners  between two isomorphic irreducible representations of the skein algebra.
The intertwiners are built from surface diffeomorphisms; they  formulated the volume conjecture when diffeomorphisms are pseudo-Anosov. In this paper, we explicitly calculate all the intertwiners
for the closed torus using  
an algebraic embedding from the skein algebra of the closed torus to a quantum torus \cite{FG},  and show the  limit superior related to the trace of these intertwiners is zero. Moreover, we consider the periodic
diffeomorphisms for surfaces with  negative Euler characteristic, and  conjecture the corresponding limit  is zero because the simplicial volume of the mapping tori for periodic
diffeomorphisms is zero.  For the once punctured torus, we make precise calculations for intertwiners and prove our conjecture.
\end{abstract}


\maketitle

\section{Introduction}

In this paper, we first discuss  irreducible representations  for  skein algebras of the closed torus and the once punctured torus, which is related to Bonahon-Wong's work \cite{BW2,BW7,BW3,BW4}. They explored the connection between irreducible representations of skein algebras and the character variety related to the fundamental group of a surface. In section \ref{seeee}, we give more detailed discussions about this connection for  the closed torus and the once punctured torus. 

A profound result of the skein algebra is the unicity theorem, which was conjectured by Bonahon-Wong and was proved by Frohman-Kania-Bartoszynska-L{\^e} \cite{TLe2}. Based on this result there is an increased focus on the
Azumaya locus. Ganev-Jordan-Safronov  proved that the smooth part of the character variety lives in the
Azumaya locus when the surface is closed  \cite{IDP}. In section \ref{seeee}, we give  an explicit description for the
Azumaya locus for the skein algebra of the closed torus.

Let $S$ be an oriented surface, let $\varphi$ be a diffeomorphism for $S$,  and let $q_n = e^{2\pi i/n}$ with $(q_n)^{1/2} = e^{\pi i/n}$ and $n$ odd.  Using these data, Bonahon-Wong-Yang built  a sequence of intertwiners between irreducible representations of the skein algebra of $S$ \cite{BW5,BW6}.  When $S$ has negative
Euler characteristic
  and $\varphi$ is pseudo-Anosov, they formulated the volume conjecture using these intertwiners:
 \begin{equation*}
 \lim_{n\text{\;} odd \rightarrow \infty} \frac{1}{n} \log |\text{Trace} \Lambda^{q_n}_{\varphi,\gamma}| = \frac{1}{4\pi}vol_{hyp}(M_\varphi),
 \end{equation*}
   where $vol_{hyp}(M_\varphi)$ is the volume of the complete hyperbolic metric of the mapping torus $M_{\varphi}$.

\def \det {\text{det}}

We explicitly  compute the intertwiners corresponding to all diffeomorphisms of the closed  torus using  
an algebraic embedding from the skein algebra of the closed torus to a quantum torus \cite{FG}, see section \ref{seeee} for more details. The representation theory for this quantum torus is  well-studied.
 We prove almost all the irreducible representations of this quantum torus can be resticted to irreducible representations of  the skein algebra of the closed torus. So intertwiners  between two isomorphic irreducible representations of this quantum torus  are also the intertwiners between irreducible representations for the skein algebra of the closed torus. 
These intertwiners are built when the quantum parameter $q$ for the skein algebra is a primitive root of unity of odd order. We use $\Lambda_n$ to denote the intertwiner obtained as above when the quantum parameter is $q_n = e^{2\pi i/n}$ with $(q_n)^{1/2} = e^{\pi i/n}$ and $n$ odd. We also normalize  $\Lambda_n$ such that $|\text{det}(\Lambda_n)| = 1$. Then we prove the following Theorem, please refer to Theorem \ref{thm4.1} for a more detailed version.

\begin{thm}\label{ttttttt}
Let $\Lambda_n$ be defined as above, then we have 
\begin{equation*}
\limsup_{\text{odd }n\rightarrow \infty} \frac{\log(|{\rm{Trace}}\Lambda_n|)}{n} = 0.
\end{equation*}
\end{thm}

The
volume conjecture was first introduced by Kashaev \cite{Ka},  and  was rewritten and generalized to the non-hyperbolic case by Hitoshi Murakami and Jun Murakami \cite{MM}  using the simplicial volume.

Bonahon-Wong-Yang only formulated the conjecture when  the diffeomorphisms are  pseudo-Anosov for 
surfaces with negative Euler characteristic.   In this paper, we broaden the scope of the conjecture to include periodic diffeomorphisms.
When
$\varphi$ is a periodic diffeomorphism for the surface $S$, the corresponding mapping torus $M_{\varphi}$ is a Seifert manifold
whose simplicial volume is zero. So we conjecture the limits are zero for  periodic diffeomorphisms.
   We prove our conjecture for the once punctured torus, which serves as examples to confirm the limit is the simplicial volume of the corresponding mapping torus.

Let $S$ be an oriented surface with negative Euler characteristic, and let $\varphi$ be a  periodic diffeomorphism for $S$. According to page 371 in \cite{BM}, $\varphi$ fixes a point in the Teichm{\"u}ller space of $S$. This fixed point in the Teichm{\"u}ller space offers a smooth $\varphi$-invariant character $\gamma$
 (that is $\gamma$ is a group homomorphism from $\pi_1(S)$ to $SL(2,\mathbb{C})$ such that $\gamma\varphi_{*}$ and $\gamma$ have the same character, where $\varphi_{*}$ is the isomorphism from $\pi_1(S)$ to $\pi_1(S)$ induced by $\varphi$). 
Suppose the quantum parameter for the skein algebra is $q_n = e^{2\pi i/n}$ with $(q_n)^{1/2} = e^{\pi i/n}$ and $n$ odd. For each puncture $v$ of $S$, 
we choose a complex number $p_v$ such that $p_v = p_{\varphi(v)}$ and $T_n(p_v) = -\text{Trace}(\gamma(\alpha_v))$, where $T_n$ is the $n$-th Chebyshev polynomial of the first type and $\alpha_v$ is the element in $\pi_1(S)$ going around puncture $v$. According to Theorem \ref{thm1.1}, we know $\gamma$ and $p_v$ uniquely determine
an irreducible representation $\rho$ of the skein algebra. Let $\varphi_{\sharp}$ be the isomorphism from the skein algebra of $S$ to itself induced by $\varphi$. Since both $\gamma$ and $p_v$ are $\varphi$-invariant, we have 
$\rho$ and $\rho\varphi_{\sharp}$ are isomorphic according to Theorem \ref{uunicity}. Thus there exists the intertwiner
$\Lambda^{q_n}_{\varphi,\gamma}$ between these two isomorphic irreducible representations. We normalize it such that 
$|\text{det}(\Lambda^{q_n}_{\varphi,\gamma})| = 1$. Then we formulate the following conjecture, please refer to Conjecture \ref{conj4.2} for a more detailed version.

\begin{conj}\label{con1}
Let $S$ be a surface with negative Euler characteristic, let $\varphi$ be a  periodic diffeomorphism for $S$, and let
 $ \Lambda^{q_n}_{\varphi,\gamma}$ be defined as above, then we have
\begin{equation*}
 \lim_{n\text{\;} odd \rightarrow \infty} \frac{1}{n} \log |{\rm{Trace}} \Lambda^{q_n}_{\varphi,\gamma}| = 0.
 \end{equation*}
\end{conj}

In Theorem \ref{th4.1}, we prove the limit in Conjecture \ref{con1}  is less than or equal to zero if it exists by using the periodic property. It seems like we are half way there to prove our conjecture. But proving that the limit is greater than or equal to zero is  harder, which is actually related to an interesting question raised by Gerald Myerson \cite{GM} and Terry Tao \cite{TT}.  By direct calculations and using some  conclusions in \cite{TK,GM},
we prove the above conjecture for some special cases:

\begin{thm}\label{8888}
For any surface with negative Euler characteristic
if $\varphi$ is of order $2^m$ where $m$ is any positive integer, we have
$$ \lim_{n\text{\;} odd \rightarrow \infty} \frac{1}{n} \log |{\rm{Trace}} \Lambda^{q_n}_{\varphi,\gamma_{\varphi}}| = 0.$$
\end{thm}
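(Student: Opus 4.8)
The plan is to exploit that a periodic diffeomorphism yields an intertwiner which, after the normalisation $|\det\Lambda_n|=1$, is of finite order up to a unimodular scalar, and then to reduce the assertion to an elementary estimate for sums of $2^m$-th roots of unity.

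\emph{Finite order.} Since $\gamma_\varphi$ is $\varphi$-invariant, $\Lambda_n:=\Lambda^{q_n}_{\varphi,\gamma_\varphi}$ is an endomorphism of the finite-dimensional irreducible module $V_n$ attached to $\gamma_\varphi$. By the composition law for the Bonahon--Wong--Yang intertwiners, $\Lambda_n^{2^m}$ is, up to a scalar, the intertwiner attached to $\varphi^{2^m}=\mathrm{id}$; in particular it commutes with the module structure on $V_n$, so by irreducibility (Schur's lemma) $\Lambda_n^{2^m}=c_n\,\mathrm{Id}$ for some scalar $c_n$. Taking determinants and using $|\det\Lambda_n|=1$ gives $|c_n|^{D_n}=1$, hence $|c_n|=1$, where $D_n=\dim V_n$. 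Since $x^{2^m}-c_n$ has simple roots, $\Lambda_n$ is diagonalisable and each eigenvalue has the form $c_n^{1/2^m}\xi^{\,j}$ with $\xi=e^{2\pi i/2^m}$ and $0\le j<2^m$; in particular every eigenvalue is unimodular.

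\emph{Upper bound and reduction.} From the above, $|\mathrm{Trace}\,\Lambda_n|\le D_n$, and $D_n$ is odd (being a positive power of the odd integer $n$ on the relevant Azumaya locus) and grows only polynomially in $n$; hence $\frac1n\log|\mathrm{Trace}\,\Lambda_n|\le\frac1n\log D_n\to0$, which recovers the inequality $\le0$ of Theorem \ref{th4.1}. For the lower bound, write $\mathrm{Trace}\,\Lambda_n=c_n^{1/2^m}S_n$ with $S_n=\sum_{j=0}^{2^m-1}m_j\,\xi^{\,j}$, where $m_j\ge0$ is the multiplicity of the eigenvalue $c_n^{1/2^m}\xi^{\,j}$; thus $S_n\in\mathbb{Z}[\xi]$, $\sum_j m_j=D_n$, and $|\mathrm{Trace}\,\Lambda_n|=|S_n|$ because $|c_n|=1$.

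\emph{Nonvanishing and the estimate.} Using $\xi^{2^{m-1}}=-1$ one rewrites $S_n=\sum_{j=0}^{2^{m-1}-1}(m_j-m_{j+2^{m-1}})\,\xi^{\,j}$, an expression in the integral power basis $1,\xi,\dots,\xi^{2^{m-1}-1}$ of $\mathbb{Z}[\xi]$. Hence $S_n=0$ iff $m_j=m_{j+2^{m-1}}$ for all $j$, which would force $D_n=2\sum_{j=0}^{2^{m-1}-1}m_j$ to be even; since $D_n$ is odd, $S_n\ne0$. Put $K=\mathbb{Q}(\xi)$, so $[K:\mathbb{Q}]=\phi(2^m)=2^{m-1}$. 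The norm $N_{K/\mathbb{Q}}(S_n)$ is a nonzero rational integer, so $|N_{K/\mathbb{Q}}(S_n)|\ge1$, while every Galois conjugate of $S_n$ is again a sum of $D_n$ roots of unity and so has absolute value $\le D_n$. Therefore $1\le|S_n|\,D_n^{2^{m-1}-1}$, i.e. $|S_n|\ge D_n^{-(2^{m-1}-1)}$, and
\[
\frac1n\log|\mathrm{Trace}\,\Lambda_n|=\frac1n\log|S_n|\ \ge\ -(2^{m-1}-1)\,\frac{\log D_n}{n}\ \longrightarrow\ 0 .
\]
Combined with the upper bound, the limit exists and equals $0$. (One may invoke the sharper bounds on small nonzero sums of roots of unity from \cite{GM,TK} in place of the crude field-norm estimate.)

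\emph{Main obstacle.} The hard direction is the lower bound $\liminf\ge0$, precisely the point linked to the Myerson--Tao question: one must simultaneously exclude $\mathrm{Trace}\,\Lambda_n=0$ and control how small a nonzero sum of roots of unity of bounded order can be. Both become tractable here only because the order is a power of $2$, so that a vanishing sum of $2^m$-th roots of unity forces the eigenvalue multiplicities to pair off and nonvanishing reduces to the parity of $\dim V_n$; for other orders the structure of vanishing sums is far more delicate. A secondary technical point is the first step: one must track the scalar cocycle in the Bonahon--Wong--Yang composition law carefully enough to be sure that $\Lambda_n^{2^m}$ is genuinely a scalar operator, which is a direct computation.
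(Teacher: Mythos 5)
Your proposal is correct and follows essentially the same route as the paper: reduce $\mathrm{Trace}\,\Lambda_n$ to a sum of $D_n$ many $2^m$-th roots of unity via $\Lambda_n^{2^m}=c_n\,\mathrm{Id}$ and Schur's lemma, show the sum is nonzero because $D_n$ is odd, and conclude with a polynomial lower bound on nonzero sums of roots of unity squeezed against the trivial upper bound. The only difference is that where the paper cites Lam--Leung for the nonvanishing and Myerson for the bound $f(N,k)\ge N^{-k}$, you prove both inline (the integral-basis parity argument in $\mathbb{Z}[\xi]$ and the field-norm estimate, which is in fact Myerson's own proof), making the argument self-contained.
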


\begin{thm}\label{6666}
Conjecture \ref{con1} holds if $S$ is  the once punctured torus.
\end{thm}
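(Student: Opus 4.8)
The plan is to enumerate the periodic mapping classes of the once punctured torus $\Sigma_{1,1}$, reduce everything to one essential case, and then compute the intertwiner by hand. The mapping class group of $\Sigma_{1,1}$ is $SL(2,\mathbb Z)$, and it acts on $K_{q_n}(\Sigma_{1,1})$ through $PSL(2,\mathbb Z)$: the matrix $-I$ is the elliptic involution, which fixes the isotopy class of every unoriented simple closed curve and hence induces the identity automorphism of the skein algebra. Every periodic element of $SL(2,\mathbb Z)$ is conjugate to one of order $1,2,3,4$, or $6$, and $|\mathrm{Trace}\,\Lambda_n|$ depends only on the conjugacy class of $\varphi$, since conjugate diffeomorphisms yield conjugate intertwiners and the normalization $|\det\Lambda_n|=1$ removes the remaining scalar ambiguity. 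Orders $2$ and $4$ are of the form $2^m$, so those cases follow from the preceding theorem, and order $1$ is trivial.

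Next I would handle order $6$ by reduction to order $3$: if $\varphi$ has order $6$ then $\varphi^{3}=-I$, so $\psi:=-\varphi$ has order $3$ and $\psi_{*}=\varphi_{*}$ as automorphisms of $K_{q_n}(\Sigma_{1,1})$; thus the intertwiners for $\varphi$ and for $\psi$ coincide up to a scalar of modulus $1$ after normalization, and the order $6$ case follows from the order $3$ case. It then remains to treat $\varphi$ of order $3$, and here I would fix the representative cyclically permuting the three Bullock--Przytycki generators $\alpha\mapsto\beta\mapsto\gamma\mapsto\alpha$ of $K_{q_n}(\Sigma_{1,1})$, together with its invariant character $\gamma_\varphi$. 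Using the explicit description of the irreducible representations of $K_{q_n}(\Sigma_{1,1})$ at $q_n$ from Section~3, one realizes $\rho$ on $\mathbb C^{d}$ (with $d=d(n)$ the PI-degree, linear in $n$) by shift and multiplication operators. Since $\varphi_*^{3}=\mathrm{id}$, Schur's lemma gives $\Lambda_n^{3}=c_nI$, and together with $|\det\Lambda_n|=1$ this forces $|c_n|=1$; hence every eigenvalue of $\Lambda_n$ has modulus $1$ and lies in $\{\mu_n,\mu_n\omega,\mu_n\omega^{2}\}$ with $\omega=e^{2\pi i/3}$. Writing $m_0,m_1,m_2$ for the multiplicities, $m_0+m_1+m_2=d$ and $|\mathrm{Trace}\,\Lambda_n|=|m_0+m_1\omega+m_2\omega^{2}|\le d=O(n)$, which already gives $\limsup_n\frac1n\log|\mathrm{Trace}\,\Lambda_n|\le 0$, consistently with Theorem~\ref{th4.1}.

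For the matching lower bound it suffices to show $m_0+m_1\omega+m_2\omega^{2}\neq0$, i.e.\ $\mathrm{Trace}\,\Lambda_n\neq0$, for all large odd $n$: a nonzero element of the Eisenstein ring $\mathbb Z[\omega]$ has absolute value at least $1$, so then $\frac1n\log|\mathrm{Trace}\,\Lambda_n|\ge o(1)$. I would prove this by computing $\mathrm{Trace}\,\Lambda_n$ explicitly: in the concrete model the intertwiner realizing the order $3$ permutation is a discrete-Fourier-transform-type operator, so its trace is a Gauss-type exponential sum $\sum_{k\bmod n}q_n^{\,Q(k)}$ for a fixed quadratic polynomial $Q$ determined by $\varphi$ and $\gamma_\varphi$. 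Such a sum is evaluated in closed form by Gauss-sum reciprocity, and combined with the estimates of \cite{TK,GM} it is nonzero and of size $n^{O(1)}$; hence $0<|\mathrm{Trace}\,\Lambda_n|=n^{O(1)}$ and $\frac1n\log|\mathrm{Trace}\,\Lambda_n|\to0$, completing all cases.

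The main obstacle is exactly this lower bound. A priori a sum of $d\sim n$ roots of unity can vanish or be super-polynomially small — this is precisely the Myerson--Tao phenomenon alluded to before Theorem~\ref{th4.1} — so one cannot argue softly. What rescues the once punctured torus is the rigidity of the intertwiner attached to a periodic map: it is forced to be essentially a quadratic Gauss sum, for which exact evaluation and strong classical lower bounds are available. The remaining work is the bookkeeping of the explicit model: identifying the quadratic form $Q$, the normalizing scalar $\mu_n$, and checking compatibility with $|\det\Lambda_n|=1$.
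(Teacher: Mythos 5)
Your skeleton is the same as the paper's: enumerate the possible orders $1,2,3,4,6$ of a periodic mapping class of $S_{1,1}$, dispose of orders $2$ and $4$ by the $2^m$ theorem and order $1$ trivially, reduce to conjugacy-class representatives, and settle the remaining cases by an explicit Gauss-sum evaluation of the intertwiner. Two genuine differences are worth noting. First, the paper does not reduce order $6$ to order $3$ via the elliptic involution; instead it invokes the $GL(2,\mathbb Z)$-conjugacy classification to reduce orders $3$ and $6$ to the two representatives $\bigl(\begin{smallmatrix}0&1\\-1&-1\end{smallmatrix}\bigr)$ and $\bigl(\begin{smallmatrix}1&1\\-1&0\end{smallmatrix}\bigr)$ and treats both by the same computation. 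Your reduction via $-I$ acting trivially on the skein algebra is correct and arguably cleaner, since it halves the number of representatives to compute. Second, your observation that $\mathrm{Trace}\,\Lambda_n$ lies in $\mu_n\mathbb Z[\omega]$, hence has modulus at least $1$ whenever it is nonzero, is a sharper lower bound than the Myerson bound $f(n,k)\ge n^{-k}$ that the paper uses in the $2^m$ case; it is a nice addition, though it still leaves the nonvanishing question open.

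That nonvanishing question is where your proposal stops short of a proof, and it cannot be dismissed as bookkeeping. Your fallback appeal to the Lam--Leung and Myerson estimates does not close it: for a sum of $n$ cube (or sixth) roots of unity, vanishing is only obstructed when $3\nmid n$ (resp.\ when $n\notin 2\mathbb N+3\mathbb N$), and since $n$ runs over all odd integers, infinitely many $n$ are multiples of $3$, so those results alone give only $\limsup=0$ (this is exactly the content of the paper's Proposition on order $p^m$, which is weaker than the full limit). The full limit therefore rests entirely on the explicit evaluation you defer. The paper carries it out: for $\varphi=\bigl(\begin{smallmatrix}0&1\\-1&-1\end{smallmatrix}\bigr)$ it fixes the ideal triangulation of Figure \ref{fgggg3}, observes that the fixed point in Teichm\"uller space forces all shear parameters to equal $1$, takes the representation $\rho_{1,1,1}$ of $\mathbb{C}_{q^4}[Y_1^{\pm1},Y_2^{\pm1},Y_3^{\pm1}]$, and exhibits the intertwiner as the matrix $\Lambda_{i,k}=q_n^{k^2+i^2+4ik+i-k}$, whose normalized trace is $n^{-1/2}\bigl|\sum_{i}q_n^{6i^2}\bigr|=\sqrt{(6,n)}$ --- manifestly nonzero and bounded. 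Until you produce the analogous explicit quadratic form $Q$ and verify that the resulting Gauss sum does not degenerate to $0$ (which a priori it could, as your own Example-style discussion of the closed torus shows traces that vanish for infinitely many $n$), the order $3$ and $6$ cases remain unproved.
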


Plan of the paper: In section \ref{sec22}, we introduce the Kauffman bracket skein algebra, the classical shadow, the volume conjecture, and the Chekhov-Fork algebra, etc. Section \ref{seeee} is about the discussion on the irreducible representations of skein algebras of  the closed torus and  the once punctured torus. In section \ref{seccece}, we calculate the intertwiners for  the closed torus and prove  Theorem \ref{ttttttt}. In section \ref{sectiiii}, we  formulate our conjecture for  periodic diffeomorphisms and prove  Theorems \ref{8888} and \ref{6666}.

{\bf Acknowledgements}: The idea of considering periodic case was suggested by my supervisor Andrew James Kricker.
We would like to thank Andrew James Kricker, Jeffrey Weenink, Roland van der Veen and Xiaoming Yu  for  constructive discussion and help.
We wish to thank the referee most warmly for numerous suggestions that have improved the exposition
of this paper.
The research is supported by the NTU research scholarship from the Nanyang Technological University (Singapore).

\section{Preliminaries}\label{sec22}

\subsection{The SL(2,$\mathbb{C})$ character variety and the Kauffman bracket skein algebra}

Let $S$ be an oriented surface of finite type. 
 The corresponding character variety
$$\mathcal{X}_{SL(2,\mathbb{C})}(S)=\rm{Hom}(\pi_{1}(S), SL(2,\mathbb{C}))//SL(2,\mathbb{C})$$
 is the set of the group homomorphisms from the fundamental group of $S$ to $SL(2,\mathbb{C})$  with the equivalence relation that two homomorphisms are equivalent if and only if they have the same character   \cite{DB,MP,PS}.

The Kauffman bracket skein algebra $SK_{q^{1/2}}(S)$ of a surface $S$, as a vector space over the complex field $\mathbb{C}$, is generated by all isotopic framed links in $S\times [0, 1]$, subject to the skein relation:
$$K_1 = q^{-1/2} K_{\infty} + q^{1/2} K_{0},$$
where $K_1, K_{\infty}, K_{0}$ are  three links that differ in a small neighborhood as shown in Figure \ref{fg2.1}, and the trivial knot relation: $K\coprod \bigcirc = -(q + q^{-1}) K$ where $\bigcirc$ is a simple knot bounding a disk that has no intersection with $K$. For any two links $[L_{1}]$, $[L_{2}]$, the multiplication $[L_{1}][L_{2}]$ is defined  by stacking $L_{2}$ above $L_{1}$. Here $q^{1/2}$ is a nonzero complex number. 
The skein algebra $SK_{q^{1/2}}(S)$ is a quantization for the regular ring of the character variety $\mathcal{X}_{SL(2,\mathbb{C})}(S)$ \cite{DB}.

\begin{figure}[htbp]
	\centering
	\includegraphics[width=7cm]{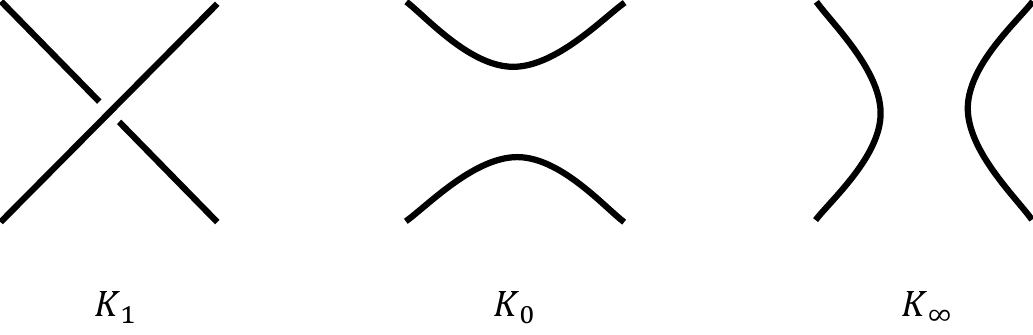}
	\caption{The Kauffman bracket skein relation.}\label{fg2.1}
\end{figure}

\subsection{Classical shadow and Unicity theorem}
We recall some notations and constructions for the classical  shadow \cite{BW2}. When $q$ is a primitive $n$-root of unity with $n$ odd and $(q^{1/2})^n = -1$,
  Bonahon and  Wong found a fascinating algebra homomorphism $T^{q^{1/2}}$
 from $SK_{-1}(S)$ to $SK_{q^{1/2}}(S)$, called the Chebyshev homomorphism.  Bonahon and  Wong proved that
 $\text{Im}(T^{q^{1/2}})$ is contained in the center of $SK_{q^{1/2}}(S)$. If $K$ is a simple knot with vertical framing,
 then $T^{q^{1/2}} ([K]) = T_n([K])$ where $T_n$ is the $n$-th Chebyshev polynomial of the first type.

 Let $\rho: SK_{q^{1/2}}(S)\rightarrow {\rm{End}}(V)$ be an irreducible representation of $SK_{q^{1/2}}(S)$.  Then there exists an algebra homomorphism
 $\kappa_{\rho}$ from $SK_{-1}(S)$ to $\mathbb{C}$ such that $\rho\circ T^{q^{1/2}}(X) = \kappa_{\rho}(X)Id_V$ for any $X$ in $SK_{-1}(S)$.
 According to \cite{DB}, there exists a unique character $[\gamma]\in \mathcal{X}_{SL(2,\mathbb{C})}(S)$
 such that $Tr^{\gamma} = \kappa_{\rho}$. Recall that $Tr^{\gamma}$ is an algebra homomorphism from
 $SK_{-1}(S)$ to $\mathbb{C}$ defined by $Tr^{\gamma}([K]) = -{\rm{Trace}}\gamma(K)$ where $[K]$
 is a simple knot. For every puncture $v$, we use $P_v$ to denote the loop going around this puncture.
 There is a complex number $p_v$ such that $\rho([P_v]) = p_v Id_V$. Then an irreducible representation
 of $SK_{q^{1/2}}(S)$ gives a character $[\gamma]$, called the classical shadow of this irreducible representation,  and puncture weights $\{p_v\}_{v}$,  with the relation that
 $-{\rm{Trace}}\gamma(\alpha_v) = T_n(p_v)$ where $\alpha_v$ denotes the element in the fundamental group of $S$  going around the puncture $v$.

 \begin{thm}[\cite{BW2,BW3,BW4,BW5}]\label{thm1.1}
 Let $q$ be a primitive $n$-root of unity with $n$ odd, and  $(q^{1/2})^n = -1$. Then an irreducible representation $\rho: SK_{q^{1/2}}\rightarrow {\rm{End}}(V)$ uniquely determines:

 (1) a  character $[\gamma]\in \mathcal{X}_{SL(2,\mathbb{C})}(S)$, represented by a group homomorphism $\gamma:\pi_1(S)\rightarrow SL(2,\mathbb{C})$;

 (2) a weight $p_v$ associated to each puncture $v$ of $S$ such that $T_n(p_v) = -{\rm{Trace}}\gamma(\alpha_v)$.

 Conversely, every data of a character $\gamma\in \mathcal{X}_{SL(2,\mathbb{C})}(S)$ and of puncture weights $p_v\in\mathbb{C}$ satisfying the above condition
 is realized by an irreducible representation $\rho: SK_{q^{1/2}}(S)\rightarrow {\rm{End}}(V)$.
 \end{thm}

\def \dim {\text{dim}}

It turns out that every character in an open dense subset of $\mathcal{X}_{SL(2,\mathbb{C})}(S)$ corresponds to a unique irreducible representation of the skein algebra.

 \begin{thm}[\cite{BW5,TLe2,IDP}]\label{uunicity}
 Suppose that $[\gamma]$ is in the smooth part of $\mathcal{X}_{SL(2,\mathbb{C})}(S)$ or, equivalently, that it is realized by an irreducible homomorphism
 $\gamma :\pi_1(S)\rightarrow SL(2,\mathbb{C})$. Then the irreducible representation $\rho: SK_{q^{1/2}}(S)\rightarrow {\rm{End}}(V)$ in Theorem \ref{thm1.1}
 is unique up to isomorphism of representations. This representation has dimension $\dim V=n^{3g+p-3}$ if $S$ has genus $g$ and $p$ punctures.
 \end{thm}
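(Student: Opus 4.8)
The plan is to reduce the statement to the general theory of Azumaya loci of affine prime PI algebras, after recognizing $SK_{q^{1/2}}(S)$ as such an algebra at a root of unity. First I would record two structural facts. Since $q$ is a primitive $n$-th root of unity with $n$ odd, the image of the Chebyshev homomorphism $T^{q^{1/2}}\colon SK_{-1}(S)\to SK_{q^{1/2}}(S)$ lies in the center; moreover for a simple closed curve $K$ one has $T^{q^{1/2}}([K])=T_n([K])$, a degree-$n$ polynomial in $[K]$, so every simple closed curve is integral over the affine subalgebra $\mathrm{Im}(T^{q^{1/2}})$, and since finitely many simple closed curves generate $SK_{q^{1/2}}(S)$ as a $\mathbb{C}$-algebra, $SK_{q^{1/2}}(S)$ is module-finite over $\mathrm{Im}(T^{q^{1/2}})$ and hence over its center $Z$; in particular it is an affine PI algebra. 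Secondly, $SK_{q^{1/2}}(S)$ has no zero divisors for $S$ of finite type, so it is prime and therefore an order in the central simple algebra $Q:=\mathrm{Frac}(Z)\otimes_Z SK_{q^{1/2}}(S)$; write $d$ for its PI degree, so that $\dim_{\mathrm{Frac}(Z)}Q=d^{2}$. Finally, unwinding the classical shadow construction recalled before Theorem \ref{thm1.1}, a closed point $\mathfrak{m}\in\mathrm{MaxSpec}(Z)$ is precisely a datum $([\gamma],\{p_v\})$ with $[\gamma]\in\mathcal{X}_{SL(2,\mathbb{C})}(S)$ and $T_n(p_v)=-\mathrm{Trace}\,\gamma(\alpha_v)$, and an irreducible representation $\rho$ as in Theorem \ref{thm1.1} is exactly a simple module over the finite-dimensional $\mathbb{C}$-algebra $SK_{q^{1/2}}(S)/\mathfrak{m}\,SK_{q^{1/2}}(S)$.

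Next I would invoke the Azumaya machinery. For an affine prime PI algebra $A$ that is module-finite over its center $Z$, the Azumaya locus $\mathcal{U}=\{\mathfrak{m}\in\mathrm{MaxSpec}(Z): A/\mathfrak{m}A\cong M_d(\mathbb{C})\}$ is a nonempty Zariski-open subset; over $\mathcal{U}$ every central character is realized by a simple $A$-module that is unique up to isomorphism, of dimension exactly $d$ (because $M_d(\mathbb{C})$ has a unique simple module, and any simple $A$-module with central character $\mathfrak{m}$ is a module over $A/\mathfrak{m}A$). Consequently both conclusions --- uniqueness up to isomorphism and $\dim V=d$ --- follow as soon as one knows that the central character attached to an irreducible $\gamma$ lies in $\mathcal{U}$. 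This is the substantive input of \cite{TLe2} (for $S$ with punctures) and \cite{IDP} (for $S$ closed): the non-Azumaya locus of $SK_{q^{1/2}}(S)$ is contained in the singular locus of $\mathrm{MaxSpec}(Z)$, equivalently $\mathcal{U}$ contains the locus of irreducible (i.e.\ smooth) characters. The argument for this containment passes through the homological regularity of $SK_{q^{1/2}}(S)$ as a $Z$-algebra (Auslander--regular and Cohen--Macaulay, i.e.\ homologically homogeneous) and the Brown--Goodearl criterion. Since an irreducible homomorphism $\gamma\colon\pi_1(S)\to SL(2,\mathbb{C})$ has centralizer $\{\pm I\}$ and gives a smooth point of $\mathcal{X}_{SL(2,\mathbb{C})}(S)$ --- the obstruction space $H^{2}(\pi_1(S);\mathfrak{sl}_2)$ vanishes for $S$ with punctures (free $\pi_1$) and is dual to $(\mathfrak{sl}_2)^{\pi_1(S)}=0$ for $S$ closed and $\gamma$ irreducible --- the corresponding $\mathfrak{m}$ lies in $\mathcal{U}$, and we are reduced to computing $d$.

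For $d$, the cleanest route is a degeneration. Using Dehn--Thurston (pants) coordinates attached to a pants decomposition of $S$ with $3g-3+p$ curves, one filters $SK_{q^{1/2}}(S)$ so that the associated graded algebra embeds as a full-rank monomial subalgebra of a quantum torus $\mathbb{T}_\omega$ of rank $N:=6g-6+2p$, with $\omega$ a root of unity built from $q$; PI degree is preserved under such a degeneration. For a quantum torus at a primitive odd $n$-th root of unity whose defining skew form has full rank $N$ (a fact one reads off from the Goldman/symplectic pairing on the twist coordinates, together with $n$ odd), the center is the $n$-th-power sublattice, of index $n^{N}$, over which $\mathbb{T}_\omega$ is free of rank $n^{N}$, so its PI degree is $n^{N/2}$. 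Hence $d^{2}=n^{6g-6+2p}$ and
\[
\dim V \;=\; d \;=\; n^{(6g-6+2p)/2}\;=\;n^{3g-3+p}\;=\;n^{3g+p-3},
\]
as claimed (note $6g-6+2p$ is always even).

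The step I expect to be the main obstacle is the containment ``locus of irreducible characters $\subseteq$ Azumaya locus''. This is exactly the deep content extracted from \cite{TLe2,IDP}: for surfaces with punctures it rests on the quantum-torus degeneration together with a direct verification of the Azumaya property at enough points, while for closed surfaces it genuinely requires identifying $SK_{q^{1/2}}(S)$ with the relevant invariant of internal skein theory / factorization homology in order to establish the homological regularity that feeds the Brown--Goodearl criterion. The degeneration used to compute $d$, though elementary, also needs care: one must check that no rank is lost in passing to the associated graded and that the relevant skew form indeed has full rank $N$.
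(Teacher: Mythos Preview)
The paper does not actually prove this theorem: it is stated with a citation to \cite{BW5,TLe2,IDP} and then used as a black box. There is no ``paper's own proof'' to compare against; the statement functions purely as background.

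Your outline is a reasonable summary of how the cited references establish the result. The reduction to Azumaya theory for an affine prime PI algebra finite over its center, the identification of central characters with pairs $([\gamma],\{p_v\})$, and the deep step ``smooth locus $\subseteq$ Azumaya locus'' via the Brown--Goodearl criterion (homological homogeneity) are indeed the architecture of \cite{TLe2,IDP}. Your computation of the PI degree by degeneration to a quantum torus is also in the right spirit, though in \cite{TLe2} the degeneration and the rank computation are carried out with more care than a sketch can convey; in particular the claim that the skew form has full rank $6g-6+2p$ over $\mathbb{Z}/n\mathbb{Z}$ for odd $n$ is not quite a one-liner from ``the Goldman pairing,'' and your parenthetical ``$6g-6+2p$ is always even'' is true but the phrasing suggests this needed checking, which it does not. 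One minor inaccuracy: you say \cite{TLe2} handles the punctured case and \cite{IDP} the closed case; in fact \cite{TLe2} already proves the unicity theorem in general, while \cite{IDP} sharpens the description of the Azumaya locus. None of this affects the validity of your sketch as a roadmap, but since the paper treats the theorem as a quotable fact rather than something to be re-derived, a proof here is not expected.
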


 \subsection{Volume conjecture for surface diffeomorphisms}
Bonahon-Wong-Yang constructed the so called Kauffman bracket intertwiners \cite{BW5,BW6}. They used these intertwiners to
 formulate the volume conjecture for surface diffeomorphisms. Here we recall their construction for Kauffman bracket intertwiners.

\def \Mod {\textup{Mod}}

 For a surface $S$, let $\varphi$ be a diffeomorphism of $S$.
Obviously $\varphi$ induces an isomorphism $\varphi_{*}$ from $\pi_{1}(S)$ to $\pi_{1}(S)$.
Then $\varphi_*$ induces an action on $\mathcal{X}_{SL(2,\mathbb{C})}(S)$ defined by
$\varphi^{*}([\gamma]) = [\gamma\varphi_{*}]$ where $\gamma$ is a representative for $[\gamma]$.
Although $\varphi_{*}$ is only defined up to conjugation, we have $\varphi^{*}$ is well-defined.
Actually the mapping class group $\Mod(S)$ acts on $\mathcal{X}_{SL(2,\mathbb{C})}(S)$.
We say an element $[\gamma]\in \mathcal{X}_{SL(2,\mathbb{C})}(S)$ is invariant under
a diffeomorphism $\varphi$, or the element it represents in $\Mod(S)$, if $\varphi^{*}([\gamma]) = [\gamma]$.

The algebra isomorphism induced by $\varphi$ from $SK_{q^{1/2}}(S)$ to itself is defined by
$\varphi_{\sharp}([K]) = [\varphi\times Id_{[0,1]} (K)]$ where $K$ is a framed link in $S\times [0,1]$. Actually
the mapping class group $\Mod(S)$ acts on $SK_{q^{1/2}}(S)$.

Let $\varphi$ be any  diffeomorphism for surface $S$, and let $[\gamma]\in \mathcal{X}_{SL(2,\mathbb{C})}(S)$ be
a $\varphi$-invariant  smooth character. For each puncture $v$, select a complex number $\theta_v$ such that ${\rm{Trace}}\gamma(\alpha_v) = -e^{\theta_v} - e^{-\theta_v}$. Since
$[\gamma]$ is $\varphi$-invariant, we can choose $\theta_v$ to be $\varphi$-invariant, that is, $\theta_v = \theta_{\varphi(v)}$. Then set
$p_v = e^{\frac{\theta_v}{n}} + e^{-\frac{\theta_v}{n}}$, we have $T_n(p_v) = -{\rm{Trace}}(\alpha_v)$ and  $\{p_v\}_{v}$ are invariant under the action of $\varphi$.
Suppose $\rho$ is an irreducible representation associated to $[\gamma]$ and puncture weights $p_v$.
Then $\rho\circ \varphi_{\sharp}$ is also an irreducible representation associated to $[\gamma]$ and puncture weights $p_v$. By the unicity theorem, we know
there exists an intertwiner $\Lambda^{q}_{\varphi,\gamma}$ such that
$$\rho\circ \varphi_{\sharp} (X) = \Lambda^{q}_{\varphi,\gamma} \circ \rho(X) \circ (\Lambda^{q}_{\varphi,\gamma})^{-1}$$
for every $X\in SK_{q^{1/2}}(S)$. We normalize the intertwiner such that $|\det(\Lambda^{q}_{\varphi,\gamma})| = 1$.

 \begin{conj}[\cite{BW5,BW6}]\label{conj4.1}
Let the pseudo-Anosov surface diffeomorphism $\varphi : S\rightarrow S$, the $\varphi$-invariant  smooth character $[\gamma]\in \mathcal{X}_{SL(2,\mathbb{C})}(S)$ and the $\varphi$-invariant puncture weights $p_v$ as above be given. For every odd $n$, consider the primitive $n$-root of unity $q_n = e^{2\pi i/n}$ and choose
$(q_n)^{1/2} = e^{\pi i/n}$.
 Then
 \begin{equation*}
 \lim_{n\text{\;} odd \rightarrow \infty} \frac{1}{n} \log |{\rm{Trace}} \Lambda^{q_n}_{\varphi,\gamma}| = \frac{1}{4\pi}vol_{hyp}(M_\varphi),
 \end{equation*}
   where $vol_{hyp}(M_\varphi)$ is the volume of the complete hyperbolic metric of the mapping torus $M_{\varphi}$.
\end{conj}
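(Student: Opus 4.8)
The plan is to prove the conjecture for $S=S_{1,1}$ by combining the very rigid structure of $\mathrm{Mod}(S_{1,1})$ with one explicit computation of the intertwiner. Throughout write $\Lambda_n$ for $\Lambda^{q_n}_{\varphi,\gamma}$, acting on the $n$-dimensional space $V$ of the irreducible representation $\rho$ attached to a $\varphi$-invariant smooth character $[\gamma]$ (so $\dim V = n^{3g+p-3}=n$ for the once punctured torus, by the unicity theorem).

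First I would reduce to the case in which $\varphi_\sharp$ has order $3$. Recall $\mathrm{Mod}(S_{1,1})\cong SL(2,\mathbb{Z})$, that $-I$ is its unique element of order $2$, and that $-I$ is realized by the hyperelliptic involution, which preserves every essential simple closed curve; hence $(-I)_\sharp=\mathrm{id}$ on $SK_{q_n^{1/2}}(S_{1,1})$, so the automorphism $\varphi_\sharp$ depends only on the image of $\varphi$ in $PSL(2,\mathbb{Z})\cong\mathbb{Z}/2*\mathbb{Z}/3$. Since the torsion of $\mathbb{Z}/2*\mathbb{Z}/3$ has order $1,2$ or $3$, a periodic $\varphi$ induces $\varphi_\sharp$ of order $k\in\{1,2,3\}$: $k=1$ for $\varphi=\pm I$; $k=2$ for $\varphi$ of order $4$ (then $\varphi^2=-I$, so $\varphi_\sharp^2=\mathrm{id}$); and $k=3$ for $\varphi$ of order $3$ or $6$ (if $\varphi$ has order $6$ then $\varphi^3=-I$, so again $\varphi_\sharp^3=\mathrm{id}$). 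Conjugate diffeomorphisms give conjugate intertwiners, hence the same $|\mathrm{Trace}\,\Lambda_n|$, and the two conjugacy classes with $\varphi_\sharp$ of order $3$ are inverse to each other, with $\varphi\mapsto\varphi^{-1}$ sending $\mathrm{Trace}\,\Lambda_n$ to its complex conjugate; so a single computation will handle the order-$3$ case.

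Now fix a $\varphi$-invariant smooth character $[\gamma]$. Since $\varphi_\sharp^k=\mathrm{id}$, Schur's lemma gives $\Lambda_n^k=c_n\,\mathrm{Id}_V$, and $|\det\Lambda_n|=1$ forces $|c_n|=1$; hence $\Lambda_n$ is diagonalizable with unimodular eigenvalues, $|\mathrm{Trace}\,\Lambda_n|\le n$, and the limsup of $\tfrac1n\log|\mathrm{Trace}\,\Lambda_n|$ is $\le 0$ (this also follows from Theorem~\ref{th4.1}). For $k=1$, $\Lambda_n$ is a unimodular scalar and $|\mathrm{Trace}\,\Lambda_n|=n$. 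For $k=2$, the eigenvalues are $\pm\zeta$ with multiplicities $m_\pm$ summing to the odd number $n$, so $m_+\ne m_-$ and $|\mathrm{Trace}\,\Lambda_n|=|m_+-m_-|\ge 1$. For $k=3$, the eigenvalues are $\zeta,\zeta\omega,\zeta\omega^2$ with $\omega=e^{2\pi i/3}$ and multiplicities $m_0,m_1,m_2$ summing to $n$, so $\mathrm{Trace}\,\Lambda_n=\zeta(m_0+m_1\omega+m_2\omega^2)$; the Eisenstein integer $m_0+m_1\omega+m_2\omega^2$ vanishes precisely when $m_0=m_1=m_2$ (which requires $3\mid n$), and otherwise has modulus $\ge 1$ because every nonzero element of $\mathbb{Z}[\omega]$ has norm $\ge 1$. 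Thus in all cases $1\le|\mathrm{Trace}\,\Lambda_n|\le n$, whence $\tfrac1n\log|\mathrm{Trace}\,\Lambda_n|\to 0$, \emph{except possibly} when $k=3$ and $3\mid n$, where it remains to prove that $m_0,m_1,m_2$ are not all equal, equivalently $\mathrm{Trace}\,\Lambda_n\ne 0$.

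It is for this last case that a direct calculation is unavoidable. An order-$3$ mapping class cyclically permutes the three generators $K_1,K_2,K_3$ of $SK_{q_n^{1/2}}(S_{1,1})$, and the $\varphi$-invariant smooth characters form the symmetric locus $\mathrm{Trace}\,\gamma(K_1)=\mathrm{Trace}\,\gamma(K_2)=\mathrm{Trace}\,\gamma(K_3)$ of the cubic-surface character variety. I would realize $\rho$ on $\mathbb{C}[\mathbb{Z}/n\mathbb{Z}]$ by the standard root-of-unity formulas, with $\rho(K_1)$ diagonal and $\rho(K_2),\rho(K_3)$ conjugates of diagonal operators by shift operators, so that the cyclic symmetry is implemented by an explicit metaplectic-type operator $\Lambda_n$---a product of a discrete Fourier transform and multiplication by a quadratic phase determined by $[\gamma]$---and $\mathrm{Trace}\,\Lambda_n$ becomes a twisted quadratic Gauss sum $\sum_{x\in\mathbb{Z}/n\mathbb{Z}}\varepsilon(x)\,q_n^{Q(x)}$. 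The main obstacle is then to show this sum does not vanish when $3\mid n$: Gauss sums of this shape can degenerate to $0$, so one must track the form $Q$ and the character $\varepsilon$ precisely enough and verify that the sums occurring here stay off the vanishing locus---this is where the arithmetic of Gauss sums and Myerson's lower bounds on how small a nonzero sum of roots of unity can be are used. Once nonvanishing is established the bounds $1\le|\mathrm{Trace}\,\Lambda_n|\le n$ conclude the proof, and this is also the point where the argument relies on $S$ being the once punctured torus rather than a general surface.
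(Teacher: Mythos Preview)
There is a fundamental mismatch between the statement and your proposal. Conjecture~\ref{conj4.1} concerns \emph{pseudo-Anosov} diffeomorphisms $\varphi$, and its right-hand side is the hyperbolic volume $\tfrac{1}{4\pi}\mathrm{vol}_{\mathrm{hyp}}(M_\varphi)>0$. This conjecture is \emph{not proved in the paper}; it is quoted from \cite{BW5,BW6} and remains open (the paper only cites the single verified instance $\varphi=\begin{pmatrix}2&1\\1&1\end{pmatrix}$ from \cite{BW6}). Your entire argument, by contrast, treats \emph{periodic} $\varphi$: you use that $\varphi_\sharp$ has finite order, apply Schur's lemma to obtain $\Lambda_n^k=c_n\,\mathrm{Id}_V$, and deduce that the eigenvalues of $\Lambda_n$ are roots of unity---none of this is available for a pseudo-Anosov map, whose induced skein automorphism has infinite order. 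What you have actually sketched is a proof of Conjecture~\ref{conj4.2} (limit equal to $0$) for $S_{1,1}$, i.e.\ the paper's Theorem~\ref{thm5.14}, not of Conjecture~\ref{conj4.1}.

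Compared with the paper's proof of Theorem~\ref{thm5.14}, your route is somewhat different. The paper dispatches orders $1,2,4$ via Theorem~\ref{thm5.16} and handles orders $3,6$ by conjugating to two explicit matrices (Lemma~\ref{lmm4.17}) and computing the intertwiner directly (Lemma~\ref{lmm4.15}), obtaining $|\mathrm{Trace}\,\Lambda_n|=\sqrt{(6,n)}\neq 0$ for every odd $n$. Your reduction through $PSL(2,\mathbb{Z})$ via the hyperelliptic involution is neat, and the parity and Eisenstein-integer arguments for $k\le 3$ with $3\nmid n$ are more elementary than appealing to Theorem~\ref{thm5.16} and Myerson's bound. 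But you still leave the crucial case $k=3$, $3\mid n$ to an unspecified ``direct calculation'' of a twisted Gauss sum, without writing down the form $Q$ or verifying nonvanishing; the paper actually carries this computation out and finds $\sqrt{3}$. So even as an argument for Theorem~\ref{thm5.14} your sketch is incomplete at exactly the point you flag as the main obstacle.
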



\subsection{Ideal triangulation and intertwiners obtained from Chekhov-Fock algebras}\label{newsec}

 Let $S$ be an oriented surface with punctures, and let $\tau = \{e_1,\cdots,e_m\}$ be an ideal triangulation for $S$, where $e_1,\cdots,e_m$ are non-isotopic disjoint embedded  arcs in $S$ connecting punctures such that all these arcs cut $S$ into triangles. We call $e_1,\cdots,e_m$ the edges of $\tau$.
An edge weight system for $\tau$ is an $m$-tuple, $a= (a_1,\cdots,a_m)$, where $a_i$ is a nonzero complex number for each $1\leq i\leq m$. The pair $(\tau,a)$ determines a character $[\overline{\gamma}]$ in $\mathcal{X}_{PSL(2,\mathbb{C})}(S)$, please refer to section 8 in \cite{BX} or section 3 in \cite{BW5} for more details.

For each ideal triangulation $\tau$, there is a Chekhov-Fock algebra $\mathcal{T}^{q}_{\tau}$ corresponding to $\tau$, where $q$ is a nonzero complex number. As an algebra over $\mathbb{C}$, the Chekhov-Fock algebra $\mathcal{T}^{q}_{\tau}$ is generated by $X_1^{\pm1},X_2^{\pm1},\dots,X_m^{\pm1}$ subject to the relations: $$X_i X_i^{-1} = X_i^{-1}X_i = 1,
X_i X_j = q^{2\sigma_{ij}} X_j X_i.$$
 Each $X_i$ is associated to the $i$-th edge in the ideal triangulation $\tau$, and $\sigma_{ij}$ is an integer determined by  $\tau$, see \cite{BX,BW1,Liu} for more details.
If we replace $q$ with $q^{\frac{1}{4}}$, we get the so called  Chekhov-Fock square root algebra $\mathcal{T}^{q^{\frac{1}{4}}}_{\tau}$.
 It is well known that $\mathcal{T}^{q}_{\tau}$ is an Ore domain. We will use $\hat{\mathcal{T}}^{q}_{\tau}$ to denote the ring of fractions of $\mathcal{T}^{q}_{\tau}$ (that is the localization over all nonzero elements).

Let $\tau,\tau^{'}$ be any two ideal triangulations for $S$. Then there is an algebra isomorphism
 $\Phi^{q}_{\tau\tau^{'}}:\hat{\mathcal{T}}^{q}_{\tau^{'}}\rightarrow \hat{\mathcal{T}}^{q}_{\tau}$, called be the Chekhov-Fock coordinate
  change isomorphism \cite{Liu}.

For an ideal triangulation $\tau$,
 there are two operations. 
(1) Reindexing: obtain a new ideal triangulation $\tau^{'}$ by reindexing all the edges in $\tau$. (2) Diagonal exchange: for any $1\leq i\leq m$, define a new ideal triangulation 
$\tau^{'} = \{e_1^{'},\cdots,e_m^{'}\}$, where $e_j^{'} = e_j$ for every $j\neq i$ and 
$e_i^{'}$ is the other diagonal of the square formed by the two faces of $\tau$ that are adjacent  to $e_i$.

Let $\tau$ be an ideal triangulation, let $a= (a_1,\cdots,a_m)$ be an edge weight system for $\tau$.
Suppose  $\tau^{'} = \{e_1^{'},\cdots,e_m^{'}\}$ is obtained from $\tau$ by reindexing such that 
$e_i^{'} = e_{\sigma(i)}$ for $1\leq i\leq m$, where $\sigma$ is a permutation for $\{1,\cdots,m\}$.
Then we  define an edge weight system $a^{'}= (a_1^{'},\cdots,a_m^{'})$ for $\tau^{'}$ by setting
$a_i^{'} = a_{\sigma(i)}$ for $1\leq i\leq m$. If $\tau^{'}$ is obtained from $\tau$ by the
 diagonal exchange, we define an edge weight system $a^{'}$ for $\tau^{'}$ using formulas in Proposition 3 in \cite{Liu}.  We will say $a^{'}$ is an edge weight system for $\tau^{'}$ derived from the pair $(\tau,a)$. Then $(\tau^{'},a^{'})$ determines the same character in $\mathcal{X}_{PSL(2,\mathbb{C})}(S)$ as $(\tau,a)$ \cite{BX,BW5}.

A sequence of ideal triangulations $\tau^{(0)}, \tau^{(1)},\dots, \tau^{(k)}$ is called an ideal triangulation sweep if, for each $1\leq i\leq k-1$, we have $\tau^{(i+1)}$ is obtained from $\tau^{(i)}$ by  reindexing or the diagonal exchange.
A sequence of edge weight systems $a^{(0)},a^{(1)},\cdots,a^{(k)}$ is called an edge weight system sweep for the ideal triangulation sweep $\tau^{(0)}, \tau^{(1)},\dots, \tau^{(k)}$, if the edge weight system $a^{(i+1)}$ for $\tau^{(i+1)}$  is derived from 
$(\tau^{(i)},a^{(i)})$ for each $0\leq i\leq k-1$. Note that the sequence $a^{(0)},a^{(1)},\cdots,a^{(k)}$ is completely determined by $a^{(0)}$. If in addition $a^{(0)} = a^{(k)}$, we call the sequence $a^{(0)},a^{(1)},\cdots,a^{(k)}$ a periodic edge weight system for  the ideal triangulation sweep $\tau^{(0)}, \tau^{(1)},\dots, \tau^{(k)}$.

Suppose $q$ is a primitive $n$-root of unity with $n$ odd.
  Let $\varphi$ be an orientation
 preserving diffeomorphism for surface $S$, and 
  let $\tau = \tau^{(0)}, \tau^{(1)},\dots, \tau^{(k)} = \varphi(\tau)$ be an ideal triangulation sweep.
Suppose
  $a = a^{(0)},a^{(1)},\dots, a^{(k)} = a$ is a periodic edge weight system for $\tau^{(0)}, \tau^{(1)},\dots, \tau^{(k)}$ (the existence of the periodic edge weight system is guaranteed by Lemma 11 in \cite{BW5}), which defines a $\varphi$-invariant character
  $[\overline{\gamma}]\in \mathcal{X}_{PSL(2,\mathbb{C})}(S)$. Then,
for each puncture $v$, we can choose a nonzero complex number $h_v$ such that
$h_v = h_{\varphi(v)}$ for every puncture $v$ and $(h_v)^n = a_{i_1}a_{i_2}\dots a_{i_{j}}$ for every puncture  $v$  adjacent to the edges $e_{i_1}, e_{i_2},\dots, e_{i_{j}}$.
From Proposition 13 in \cite{BW5}, we know  $a$ and puncture weights $h_v$ uniquely determine an irreducible representation 
$\overline{\rho}:\mathcal{T}^{q}_{\tau}\rightarrow \text{End}(V)$ for the  Chekhov-Fock algebra $\mathcal{T}^{q}_{\tau}$
such that $\overline{\rho}(X_i^{n}) = a_i$ for $1\leq i\leq m$ and $\overline{\rho}(H_v)
=h_v$ for each puncture $v$, where $H_v$ is a central element in $\mathcal{T}^{q}_{\tau}$ associated to each puncture $v$.
  Let $\Phi^{q}_{\tau\varphi(\tau)}:\hat{\mathcal{T}}^{q}_{\varphi(\tau)}\rightarrow \hat{\mathcal{T}}^{q}_{\tau}$ be the Chekhov-Fock coordinate
  change isomorphism, and let $\Psi_{\varphi(\tau)\tau}^{q}: \hat{\mathcal{T}}^{q}_{\tau} \rightarrow \hat{\mathcal{T}}^{q}_{\varphi(\tau)}$
  be the algebra isomorphism induced by $\varphi$. Then $\overline{\rho}\simeq \overline{\rho}\circ \Phi^{q}_{\tau\varphi(\tau)}\circ \Psi_{\varphi(\tau)\tau}^{q}$, so
  there exists an intertwiner $\overline{\Lambda}^{q}_{\varphi,\overline{\gamma}}$
  such that
  $$\overline{\rho}\circ \Phi^{q}_{\tau\varphi(\tau)}\circ \Psi_{\varphi(\tau)\tau}^{q} (X) =
  \overline{\Lambda}^{q}_{\varphi,\overline{\gamma}} \circ \overline{\rho}(X) \circ (\overline{\Lambda}^{q}_{\varphi,\overline{\gamma}})^{-1}$$
  for every $X\in \mathcal{T}^{q}_{\tau}$.

Under certain  conditions, the trace of intertwiners obtained from Chekhov-Fock algebras  equals  
the trace of intertwiners obtained from skein algebras, see Theorem 16 in \cite{BW5}. We will use this equality to calculate the trace of intertwiners obtained from skein algebras for the once punctured torus in section \ref{sectiiii}.

 From now on, we will always assume $q^{1/2}$ is
a primitive $n$-root of $-1$ with $n$ odd.

\section{Irreducible representation construction for $SK_{q^{1/2}}(T^2)$ and $SK_{q^{1/2}}(S_{1,1})$}\label{seeee}

In order to get  Kauffman bracket intertwiners, we want to find the explicit irreducible representations associated to given characters and puncture weights.
Here we construct irreducible representations for skein algebras of the closed torus $T^2$ and the once punctured torus $S_{1,1}$.
In section \ref{seccece}, we will use these irreducible representations to calculate intertwiners for the closed torus.


\subsection{An algebraic embedding for $SK_{q^{1/2}}(T^2)$}
Let $\mathbb{C}[X^{\pm 1},Y^{\pm 1}]_{q^{1/2}}$  be the algebra generated by $X, X^{-1}, Y, Y^{-1}$, subject to the relations $XY=q YX, XX^{-1} = X^{-1}X = 1, YY^{-1}  = Y^{-1}Y = 1$.  Frohman-Gelca built an algebraic embedding \cite{FG}:
\begin{equation*}
\begin{split}
G_{q^{1/2}} : SK_{q^{1/2}}(T^{2})&\rightarrow \mathbb{C}[X^{\pm 1},Y^{\pm 1}]_{q^{1/2}}\\
(a,b)_{T}&\mapsto \theta_{(a,b)} + \theta_{(-a, -b)},
\end{split}
\end{equation*}
where $(a,b)_{T}$ is the simple link associated to two integers $a, b$, and $\theta_{(a,b)}= q^{-ab/2}X^{a}Y^{b}$. If $\rm{gcd}(a, b) = 1$  (with the convention that  gcd$(\pm1,0) =\text{gcd}(0, \pm1) = 1$), then $(a,b)_{T}$ is represented by the simple knot $(a, b)$ in $\mathbb{R}^{2}/\mathbb{Z}^{2}$ with vertical framing. If $\text{gcd}(a,b) = k$ and  $a = a^{'}k, b = b^{'}k$, then $(a,b)_{T} = T_{k}((a^{'},b^{'}))$ where $T_{k}$ is the $k$-th  Chebyshev polynomial  of the first  type.
We have
$\theta_{(a,b)}\theta_{(c,d)} = q^{1/2\begin{bmatrix}
a & b\\
c & d
\end{bmatrix}} \theta_{(a+c, b+d)}$, and $(\theta_{(a,b)})^{-1} = \theta_{(-a, -b)}$. Since $\theta_{(a,b)} + \theta_{(-a, -b)} = q^{-ab/2}(X^{a}Y^b + X^{-a}Y^{-b})$,  then
${\rm{Im}} G_{q^{1/2}} = \text{span}<X^{a}Y^b + X^{-a}Y^{-b}| (a, b) \in \mathbb{Z}\times \mathbb{Z}>$.

Let $T_{q^{1/2}}$ be the Chebyshev homomorphism from the skein algebra $SK_{-1}(T^2)$ to $SK_{q^{1/2}}(T^{2})$ defined in \cite{BW2}, and let $F_{q^{1/2}}: \mathbb{C}[X^{\pm 1},Y^{\pm 1}]_{-1}\rightarrow \mathbb{C}[X^{\pm 1},Y^{\pm 1}]_{q^{1/2}}$ defined by $X\mapsto X^{n}, Y\mapsto Y^{n}$. 
It is easy to check that we have $F_{q^{1/2}}G_{-1} = G_{q^{1/2}}T_{q^{1/2}}$.

\subsection{Irreducible representations for $SK_{q^{1/2}}(T^2)$}
Bonahon-Liu  described the irreducible representations of $\mathbb{C}[X^{\pm 1},Y^{\pm 1}]_{q^{1/2}}$ \cite{BX}. Let $V$ denote the $n$-dimensional vector space over the complex field with basis $e_{0}, e_{1},\dots, e_{n-1}$, and let $u,v$ be any two nonzero complex numbers. Set $\rho_{u,v}(X) e_i = uq^{i} e_i,\;
\rho_{u,v}(Y) e_i = v e_{i+1}$, where the indices are considered modulo $n$,
then $\rho_{u,v}$ is an irreducible representation.
Any irreducible representation of $\mathbb{C}[X^{\pm},Y^{\pm}]_{q^{1/2}}$ is isomorphic to  a representation  $\rho_{u,v}$, and $\rho_{u,v}\simeq \rho_{u^{'},v^{'}}$ if and only if $u^{n}=(u^{'})^{n}, v^{n}=(v^{'})^{n}$.

It is well-known that $\pi_{1}(T^2) = \mathbb{Z}\oplus\mathbb{Z} = \mathbb{Z}\alpha\oplus \mathbb{Z}\beta$ where $\alpha= (1, 0)$ and $\beta = (0, 1)$.
For any $[\gamma] \in \mathcal{X}_{SL(2,\mathbb{C})}(T^{2})$, we have $[\gamma]$ has a representative $\gamma$ such that
$\gamma(\alpha)$=
$\begin{pmatrix}
  \lambda_{1} & 0\\
  0 & \lambda_{1}^{-1}
\end{pmatrix}$ and
$\gamma(\beta)$=
$\begin{pmatrix}
  \lambda_{2} & 0\\
  0 & \lambda_{2}^{-1}
\end{pmatrix}$ because $\pi_{1}(T^2)$ is commutative.

\def \mod {\textup{mod}}

For any given character $[\gamma] \in \mathcal{X}_{SL(2,\mathbb{C})}(T^{2})$, the following Theorem offers a representation of $SK_{q^{1/2}}(T^2)$ whose classical shadow is $[\gamma]$.
For this Theorem, we use the fact that $ab + a + b\equiv \text{gcd}(a, b)$ $(\mod\text{ } 2)$ for any two integers $a,b$ (recall that $\text{gcd}(\pm1, 0) = \text{gcd}(0, \pm1) = 1$).

\begin{thm}\label{thm2.1}
Choose $u, v \in\mathbb{C}$ such that $u^n = -\lambda_1, v^n = -\lambda_2$ or $u^n = -\lambda_1^{-1}, v^n = -\lambda_2^{-1}$, then the classical shadow of $\rho_{u,v}G_{q^{1/2}}$ is $[\gamma]$.
\end{thm}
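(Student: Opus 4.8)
The plan is to compute the classical shadow of the representation $\rho_{u,v}\circ G_{q^{1/2}}$ directly from its definition, namely by evaluating the scalar $\kappa$ attached to the Chebyshev homomorphism and matching it with $Tr^{\gamma}$. Recall that for an irreducible representation $\rho$ of $SK_{q^{1/2}}(T^2)$, the classical shadow is the unique character $[\gamma']$ with $\rho\circ T^{q^{1/2}}(Z) = Tr^{\gamma'}(Z)\,\mathrm{Id}$ for all $Z\in SK_{-1}(T^2)$. Since $SK_{-1}(T^2)$ is generated (as an algebra) by the simple knots $(1,0)_T$, $(0,1)_T$ and $(1,1)_T$ (the three curves that, with the Frohman–Gelca product-to-sum formula, generate everything), it suffices to check the identity on these three elements, or even just on $(1,0)_T$ and $(0,1)_T$ together with one more to pin down the sign/third trace.

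First I would use the compatibility relation $F_{q^{1/2}}\circ G_{-1} = G_{q^{1/2}}\circ T^{q^{1/2}}$ established just above the theorem. Applying $\rho_{u,v}$ to both sides, we get $\rho_{u,v}\circ G_{q^{1/2}}\circ T^{q^{1/2}}(Z) = \rho_{u,v}\circ F_{q^{1/2}}\circ G_{-1}(Z)$. Now for $Z = (a,b)_T$ with $\gcd(a,b)=1$, we have $G_{-1}((a,b)_T) = \theta_{(a,b)} + \theta_{(-a,-b)} = (-1)^{-ab/2}\cdot(\text{something})$; more precisely in the $q^{1/2}=-1$ (i.e. commutative up to sign) setting this is $X^aY^b + X^{-a}Y^{-b}$ up to the sign $(-1)^{ab}$ — here the parity fact $ab+a+b\equiv \gcd(a,b)\pmod 2$ flagged before the theorem is exactly what controls that sign. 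Then $F_{q^{1/2}}$ sends this to $X^{na}Y^{nb} + X^{-na}Y^{-nb}$ (up to the same sign), and finally $\rho_{u,v}$ of $X^{na}Y^{nb}$ acts on $e_i$ by a scalar times $e_{i+nb}$; since $n$ is the dimension, $e_{i+nb}=e_i$, so this is genuinely a scalar operator. Computing that scalar: $\rho_{u,v}(X^{na}Y^{nb})e_i = u^{na}q^{nai}v^{nb}e_{i+nb} = u^{na}v^{nb}e_i$ because $q^n=1$. Hence $\rho_{u,v}\circ F_{q^{1/2}}\circ G_{-1}((a,b)_T) = \pm(u^{na}v^{nb} + u^{-na}v^{-nb})\mathrm{Id}$, with the sign determined by parity.

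Next I would compare this with $Tr^{\gamma}((a,b)_T) = -\mathrm{Trace}\,\gamma((a,b)) = -(\lambda_1^a\lambda_2^b + \lambda_1^{-a}\lambda_2^{-b})$, using that $\gamma(\alpha),\gamma(\beta)$ are the prescribed diagonal matrices and $\pi_1(T^2)$ is abelian so $\gamma((a,b)) = \mathrm{diag}(\lambda_1^a\lambda_2^b, \lambda_1^{-a}\lambda_2^{-b})$. Substituting $u^n=-\lambda_1$, $v^n=-\lambda_2$ gives $u^{na}v^{nb} = (-1)^{a+b}\lambda_1^a\lambda_2^b$, so $\pm(u^{na}v^{nb}+u^{-na}v^{-nb}) = \pm(-1)^{a+b}(\lambda_1^a\lambda_2^b+\lambda_1^{-a}\lambda_2^{-b})$. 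The remaining task is a sign bookkeeping check: the overall sign coming from $G_{-1}$ (carrying $(-1)^{ab}$) times $(-1)^{a+b}$ should equal $-1$ for all coprime $(a,b)$ — and indeed $(-1)^{ab+a+b} = (-1)^{\gcd(a,b)} = (-1)^1 = -1$ by the highlighted parity identity, which is precisely why that fact was isolated. The same computation with the alternative choice $u^n=-\lambda_1^{-1}$, $v^n = -\lambda_2^{-1}$ just swaps the two terms and gives the same value. So $\rho_{u,v}\circ G_{q^{1/2}}\circ T^{q^{1/2}} = Tr^{\gamma}$ on each $(a,b)_T$, hence (since these generate and both sides are algebra homomorphisms into $\mathbb{C}$, modulo the reduction of $(a,b)_T$ to $T_k$ of a primitive class, which is handled compatibly by $T_k$ on both sides) on all of $SK_{-1}(T^2)$; by the uniqueness in Theorem \ref{thm1.1} the classical shadow is $[\gamma]$.

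\textbf{Main obstacle.} The genuinely delicate point is the sign tracking: making sure the $q^{-ab/2}$-type normalization factors in $\theta_{(a,b)}$, the specialization at $q^{1/2}$ a primitive $n$-th root of $-1$ (so $(q^{1/2})^n = -1$, not $+1$, which affects half-integer powers), and the $(-1)^{ab}$ from $G_{-1}$ all combine via $ab+a+b\equiv\gcd(a,b)\pmod 2$ to produce exactly the minus sign in $Tr^{\gamma}([K]) = -\mathrm{Trace}\,\gamma(K)$ uniformly in $(a,b)$; I would also need to confirm the $\gcd(a,b)=k>1$ case reduces correctly, i.e. that $(a,b)_T = T_k((a',b')_T)$ on the skein side matches $T_k$ applied to the primitive-class computation on the character side, which is automatic since $T_n$ and $T_k$ commute appropriately and the Chebyshev homomorphism is built from exactly this structure.
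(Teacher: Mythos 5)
Your proposal is correct and follows essentially the same route as the paper: both use the compatibility $F_{q^{1/2}}G_{-1}=G_{q^{1/2}}T^{q^{1/2}}$ to compute the scalar $\rho_{u,v}(\theta_{(na,nb)}+\theta_{(-na,-nb)})=(-1)^{ab}(u^{na}v^{nb}+u^{-na}v^{-nb})$, compare it with $Tr^{\gamma}((a,b)_T)$ computed via the Chebyshev reduction $(a,b)_T=T_d((a',b')_T)$, and close the sign gap with the parity identity $ab+a+b\equiv\gcd(a,b)\pmod 2$. The only organizational difference is that you verify the coprime case first and reduce the general case through $T_d$, whereas the paper runs the identical computation uniformly for all $(a,b)$; this changes nothing of substance.
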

\begin{proof}
To show  the classical shadow of $\rho_{u,v}G_{q^{1/2}}$ is $[\gamma]$, it suffices to show
$\rho_{u,v}G_{q^{1/2}}(T_{q^{1/2}}((a, b)_{T})) \\= Tr^{\gamma}((a, b)_{T})Id_{V}$ for all $(a, b)_{T}\in SK_{-1}(T^{2})$. First we have
\begin{align*}
&\rho_{u,v}G_{q^{1/2}}(T_{q^{1/2}}((a, b)_{T})) = \rho_{u,v}(F_{q^{1/2}}G_{-1}((a,b)_{T})) \\
= &\rho_{u,v}(F_{q^{1/2}}(\theta_{(a,b)}+ \theta_{(-a, -b)}))=\rho_{u,v}(\theta_{(na, nb)}+\theta_{(-na, -nb)}) \\
= &\rho_{u,v}((-1)^{ab}X^{na}Y^{nb}+(-1)^{ab}X^{-na}Y^{-nb})\\
 = &(-1)^{ab}[(\rho_{u,v}(X))^{na}(\rho_{u,v}(Y))^{nb}+(\rho_{u,v}(X))^{-na}(\rho_{u,v}(Y))^{-nb}]\\
= &(-1)^{ab}[(u^n)^{a}(v^n)^{b}+(u^n)^{-a}(v^n)^{-b}]Id_V\\
= &(-1)^{ab+a+b}[\lambda_1^{a}\lambda_2^{b}+\lambda_1^{-a}\lambda_2^{-b}]Id_V.
\end{align*}
Suppose $\text{gcd}(a,b) = d$ and $a = a^{'}d, b = b^{'}d$, then we have
\begin{align*}
&Tr^{\gamma}((a, b)_{T}) = Tr^{\gamma}(T_d((a^{'}, b^{'}))) = T_d(Tr^{\gamma}((a^{'}, b^{'}))) \\
=&T_d(-\text{Trace}(\gamma((a^{'}, b^{'}))))=T_d(-\text{Trace}(\gamma(a^{'}\alpha+b^{'}\beta)))\\
=&T_d(-\text{Trace}((\gamma(\alpha))^{a^{'}}(\gamma(\beta))^{b^{'}}))
 =  T_d((-\lambda_{1}^{a^{'}}\lambda_{2}^{b^{'}}) + (- \lambda_{1}^{-a^{'}}\lambda_{2}^{-b^{'}}))\\
  = & (-\lambda_{1}^{a^{'}}\lambda_{2}^{b^{'}})^{d} + (- \lambda_{1}^{-a^{'}}\lambda_{2}^{-b^{'}})^{d}
   =  (-1)^{d}[\lambda_{1}^{da^{'}}\lambda_{2}^{db^{'}} +  \lambda_{1}^{-da^{'}}\lambda_{2}^{-db^{'}}]\\
   = & (-1)^{ab+a+b}[\lambda_1^{a}\lambda_2^{b}+\lambda_1^{-a}\lambda_2^{-b}].
\end{align*}

\end{proof}

We can easily get the following Theorem by using the representation theory.

\begin{thm}\label{thm2.2}
Under the same assumption as in Theorem \ref{thm2.1}, we have the following conclusions:

(a) if $\lambda_{1}\neq \pm1$ or $\lambda_{2}\neq \pm1$, the representation $\rho_{u,v} G_{q^{1/2}}$ is  irreducible.

(b) if $\lambda_{1} = \pm1$ and $\lambda_{2} = \pm1$, we have $V$ has only two irreducible subrepresentations $V_{1}, V_{2}$, and
$V=V_{1}\oplus V_{2}$, and $\dim(V_1)=(n+1)/2, \dim(V_2) = (n-1)/2$, especially $V_1=$
span$<e_0, e_1+e_{n-1}, e_{2} + e_{n-2},\dots, e_{(n-1)/2} + e_{(n+1)/2}>$ and $V_2=$
span$<e_1-e_{n-1}, e_{2} - e_{n-2},\dots, e_{(n-1)/2} - e_{(n+1)/2}>$ if $u=\pm1, v=\pm1$.
\end{thm}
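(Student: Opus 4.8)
Write $\mathcal{A}:=\rho_{u,v}\big(G_{q^{1/2}}(SK_{q^{1/2}}(T^2))\big)\subseteq\mathrm{End}(V)$. Since $G_{q^{1/2}}$ is injective with image the subalgebra linearly spanned by the elements $X^aY^b+X^{-a}Y^{-b}$, the algebra $\mathcal{A}$ is the unital linear span of the operators $M_{a,b}:=\rho_{u,v}(\theta_{(a,b)}+\theta_{(-a,-b)})$, and a direct computation gives $M_{a,b}e_i=u^av^bq^{ai+ab/2}e_{i+b}+u^{-a}v^{-b}q^{-ai+ab/2}e_{i-b}$. In particular $M_{a,0}$ is diagonal with $i$-th entry $(uq^i)^a+(uq^i)^{-a}$, while $M_{0,1}e_i=ve_{i+1}+v^{-1}e_{i-1}$. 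Since $\rho_{u,v}G_{q^{1/2}}$ is irreducible exactly when $\mathcal{A}$ acts irreducibly on $V$, the plan for (a) is to prove $\mathcal{A}=\mathrm{End}(V)$, and for (b) to exhibit the stated decomposition.

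\textbf{Part (a).} By the $X\leftrightarrow Y$ symmetry of the construction, implemented concretely by the discrete Fourier transform $e_i\mapsto\sum_j q^{-ij}e_j$ (which turns $\rho_{u,v}$ into a representation of the same form with $\rho(Y)$ now diagonal and the roles of $\lambda_1,\lambda_2$ exchanged), it suffices to treat the case $\lambda_1\neq\pm1$, i.e.\ $u^{2n}\neq1$, i.e.\ $u^2$ is not an $n$-th root of unity. Then the diagonal entries $d_i=uq^i+u^{-1}q^{-i}$ of $M_{1,0}$ are pairwise distinct, because $d_i=d_j$ forces $q^i=q^j$ or $u^2q^{i+j}=1$ and the second is excluded. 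Writing $M_{a,0}=2T_a\!\big(\tfrac12 M_{1,0}\big)$ and using that the Chebyshev polynomials of degrees $0,\dots,n-1$ span all polynomials of degree $<n$, a Vandermonde argument shows $\{M_{a,0}:0\le a\le n-1\}$ spans all diagonal operators, so every $E_{ii}\in\mathcal{A}$. As $\mathcal{A}$ is an algebra containing $M_{0,1}$, it then contains $E_{i+1,i+1}M_{0,1}E_{ii}=vE_{i+1,i}$ for every $i$; composing these matrix units around the $n$-cycle of indices produces every $E_{jk}$, so $\mathcal{A}=\mathrm{End}(V)$ and the representation is irreducible.

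\textbf{Part (b).} Now assume $\lambda_1=\pm1$ and $\lambda_2=\pm1$, i.e.\ $u^{2n}=v^{2n}=1$. Since $n$ is odd, $u$ and $v$ are each $\pm1$ times an $n$-th root of unity; conjugating $\rho_{u,v}$ by a monomial change of basis of the form $e_i\mapsto c^ie_{i+k}$ is an isomorphism of representations that replaces $(u,v)$ by $(\pm1,\pm1)$ and changes neither the dimensions nor the lattice of submodules, so I may assume $u=\epsilon_1$, $v=\epsilon_2$ with $\epsilon_1,\epsilon_2\in\{\pm1\}$. In this case the reflection $\iota:e_i\mapsto e_{-i}$ (indices mod $n$) is an involution of $V$ satisfying $\iota\rho_{u,v}(X)\iota=\rho_{u,v}(X)^{-1}$ and $\iota\rho_{u,v}(Y)\iota=\rho_{u,v}(Y)^{-1}$, whence $\iota M_{a,b}\iota=M_{a,b}$ for all $(a,b)$ and $\iota$ commutes with $\mathcal{A}$. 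Therefore the $\pm1$-eigenspaces $V_1,V_2$ of $\iota$ are $\mathcal{A}$-submodules; they are exactly the spans listed in the statement, of dimensions $(n+1)/2$ and $(n-1)/2$. (For general $u,v$ the same picture holds after the above normalization; note that for generic parameters no reflection commutes with $M_{0,1}$, which is why the normalization is needed.)

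\textbf{Irreducibility of $V_1,V_2$, and the main obstacle.} It remains to show each $V_i$ is irreducible. Here $M_{1,0}\in\mathcal{A}$ is diagonal with eigenvalue $\epsilon_1(q^i+q^{-i})$ on $e_i$, so its eigenspaces are $\mathbb{C}e_0$ and $W_i:=\mathrm{span}(e_i,e_{n-i})$ for $1\le i\le(n-1)/2$, and the spectral projections $P_0,\dots,P_{(n-1)/2}$ all lie in $\mathbb{C}[M_{1,0}]\subseteq\mathcal{A}$. Since $M_{0,1}e_k=\epsilon_2(e_{k+1}+e_{k-1})$, the operator $P_iM_{0,1}P_j\in\mathcal{A}$ is nonzero precisely when the labels $i,j$ are adjacent, and — the key point — its restriction to $V_1$, on which each $P_i$ has rank one, is a nonzero scalar multiple of the elementary matrix exchanging the basis vectors labelled $i$ and $j$. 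Because $0,1,\dots,(n-1)/2$ is a connected path under adjacency, products of these elementary matrices generate all of $\mathrm{End}(V_1)$, so $\mathcal{A}$ surjects onto $\mathrm{End}(V_1)$ and $V_1$ is irreducible; the same argument applied to $V_2$ (on which $P_0$ restricts to $0$, leaving the connected path $1,\dots,(n-1)/2$) gives irreducibility of $V_2$. Finally $\dim V_1\neq\dim V_2$ forces $V_1\not\cong V_2$, so by elementary semisimple module theory $V=V_1\oplus V_2$ has no submodules other than $0,V_1,V_2,V$, so $V_1,V_2$ are its only irreducible subrepresentations. I expect the main obstacle to be precisely this last step of (b): producing the commuting involution $\iota$ (and recognizing that one must first normalize $(u,v)$) and then upgrading "block diagonal with respect to $\iota$" to "surjects onto each block" via the path structure of the $M_{1,0}$-eigenspaces under $M_{0,1}$. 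The arithmetic about when $u^{2n}=1$, and the Chebyshev/Vandermonde bookkeeping, are comparatively routine.
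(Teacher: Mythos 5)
Your proof is correct and complete. The paper itself offers no argument for this theorem (it only remarks that it follows ``easily\ldots by using representation theory''), so there is no written proof to compare against; your proposal fills that gap. Both halves check out: in (a) the computation $M_{a,b}e_i=u^av^bq^{ai+ab/2}e_{i+b}+u^{-a}v^{-b}q^{-ai+ab/2}e_{i-b}$ is right, the eigenvalues $uq^i+u^{-1}q^{-i}$ of $M_{1,0}$ are indeed pairwise distinct exactly when $u^{2n}\neq1$ (since $d_i-d_j=(q^i-q^j)(u-u^{-1}q^{-i-j})$), and Lagrange interpolation plus $E_{i+1,i+1}M_{0,1}E_{ii}=vE_{i+1,i}$ generates all of $\mathrm{End}(V)$; in (b) the normalization $u,v\in\{\pm1\}$ is legitimate because $u,v\in\mu_{2n}=\mu_2\times\langle q\rangle$ for odd $n$, the involution $\iota$ genuinely commutes with every $M_{a,b}$ (as $\iota\rho(\theta_{(a,b)})\iota=\rho(\theta_{(-a,-b)})$), and the path-connectivity of the $M_{1,0}$-eigenspaces under $M_{0,1}$ gives irreducibility of each $\iota$-eigenspace, after which $\dim V_1\neq\dim V_2$ rules out any further submodules. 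The only step stated a little loosely is the reduction by ``$X\leftrightarrow Y$ symmetry'' in (a): one should note explicitly that the discrete Fourier transform implements the automorphism $X\mapsto Y$, $Y\mapsto X^{-1}$ of $\mathbb{C}[X^{\pm1},Y^{\pm1}]_{q^{1/2}}$, and that this automorphism preserves the Frohman--Gelca image (this is exactly the paper's $F_{A,+}$ for $A=\left(\begin{smallmatrix}0&1\\-1&0\end{smallmatrix}\right)$), so that the reduced case really does cover the original hypothesis. With that sentence added, the argument is airtight.
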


%

\begin{rem}

The Azumaya locus of $SK_{q^{1/2}}(T^2)$ is a subset of $\mathcal{X}_{SL(2,\mathbb{C})}(T^2)$. An element in $\mathcal{X}_{SL(2,\mathbb{C})}(T^2)$ lives in the Azumaya locus  if it   corresponds to a unique irreducible repesentation of $SK_{q^{1/2}}(T^2)$ (the correspondence is the one in Theorem \ref{thm1.1}).
We know the PI-dimension of $SK_{q^{1/2}}(T^2)$ is $n$. Then
a character $[\gamma]\in \mathcal{X}_{SL(2,\mathbb{C})}(T^2)$ lives in the Azumaya locus if and only if
there exists an irreducible representation of $SK_{q^{1/2}}(T^2)$ of dimension $n$ whose classical shadow is $[\gamma]$.
So from Theorem \ref{thm2.2}, we get $[\gamma]$ lives in the Azumaya locus if and only if $\lambda_{1}\neq \pm1$ or $\lambda_{2}\neq \pm1$,
where $\gamma(\alpha)$=
$\begin{pmatrix}
  \lambda_{1} & 0\\
  0 & \lambda_{1}^{-1}
\end{pmatrix}$ and
$\gamma(\beta)$=
$\begin{pmatrix}
  \lambda_{2} & 0\\
  0 & \lambda_{2}^{-1}
\end{pmatrix}$.
A contemporaneous paper by Karuo-Korinman \cite{KK} considered instead the case when $q^{1/2}$ is an odd order root of unity; both cases were studied through similar methods. They proved the character lives in the Azumaya locus of the skein algebra of a closed surface if and only if  the character is noncentral.

In \cite{BW7},  Bonahon-Wong proved  the Witten-Reshetikhin-Turaev representation of the Kauffman bracket skein algebra is irreducible and whose
classical shadow is the trivial character. For the closed torus $T^2$, we use $V_{T^2}$ to denote the Witten-Reshetikhin-Turaev representation of $SK_{q^{1/2}}(T^2)$.
We know $\dim V_{T^2} = \frac{n-1}{2}$ with basis $v_1, v_2,\dots, v_{\frac{n-1}{2}}$ where $v_k$ is the skein in  the solid torus represented by $2k-2$ nontrivial parallel
closed curves, which are parallel to the core of the solid torus,  with the $(2k-2)$-th Jones-Wenzel idempotent inserted. In Theorem \ref{thm2.2} with $\lambda_1 = \lambda_2 = 1$, we have $V_2$ is isomorphic to $V_{T^2}$
as  representations for $SK_{q^{1/2}}(T^2)$, and the isomorphism is given by
$$e_{2k-1} - e_{n-2k+1}\mapsto v_k,\forall 1\leq k\leq\frac{n-1}{2}.$$

\end{rem}

\subsection{Irreducible representations for $SK_{q^{1/2}}(S_{1,1})$}


We  want to find the explicit irreducible representations of $SK_{q^{1/2}}(S_{1,1})$  corresponding to given characters and puncture weights.
 Let $\mathbb{C}_{q}[X_1^{\pm1}, X_2^{\pm1}, X_3^{\pm1}]$ be the algebra generated by $X_1, X_2, X_3$ subject to the relations:
$$X_1X_2 = qX_2X_1, X_2X_3 = q X_3X_2, X_3X_1 = qX_1X_2, X_iX_i^{-1} = X_i^{-1}X_i = 1.$$
We have $\mathbb{C}_{q}[X_1^{\pm1}, X_2^{\pm1}, X_3^{\pm1}] = \mathcal{T}^{q^{1/4}}_{\tau}(S_{1,1})$ where $\tau$ is the ideal triangulation in  Figure \ref{fgggg3}.
\begin{figure}[htbp]
  \centering
  \includegraphics[scale=0.5]{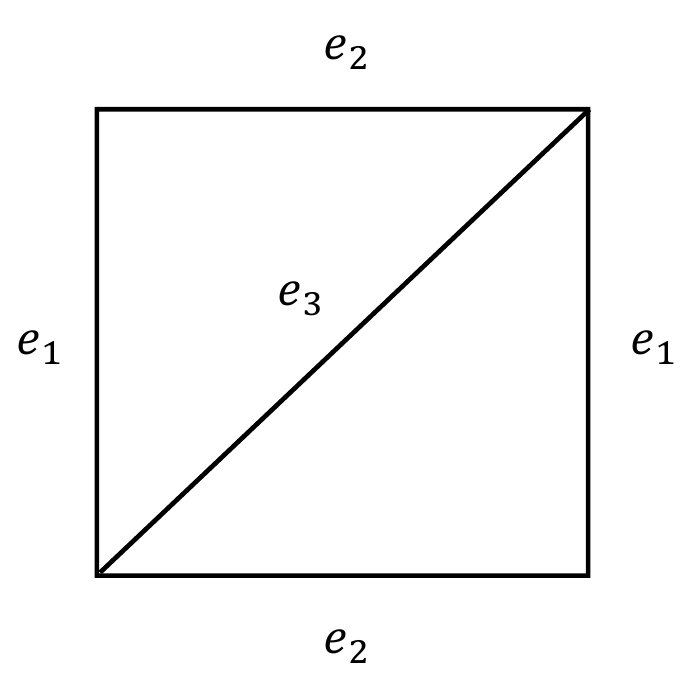}
\caption{}\label{fgggg3}
\end{figure}

We define the skeins $K_1, K_2, K_3$ in the skein algebra
$SK_{q^{1/2}}(S_{1,1})$ using Figure \ref{fg3.2}.
\begin{figure}[htbp]
  \centering
  \includegraphics[scale=0.6]{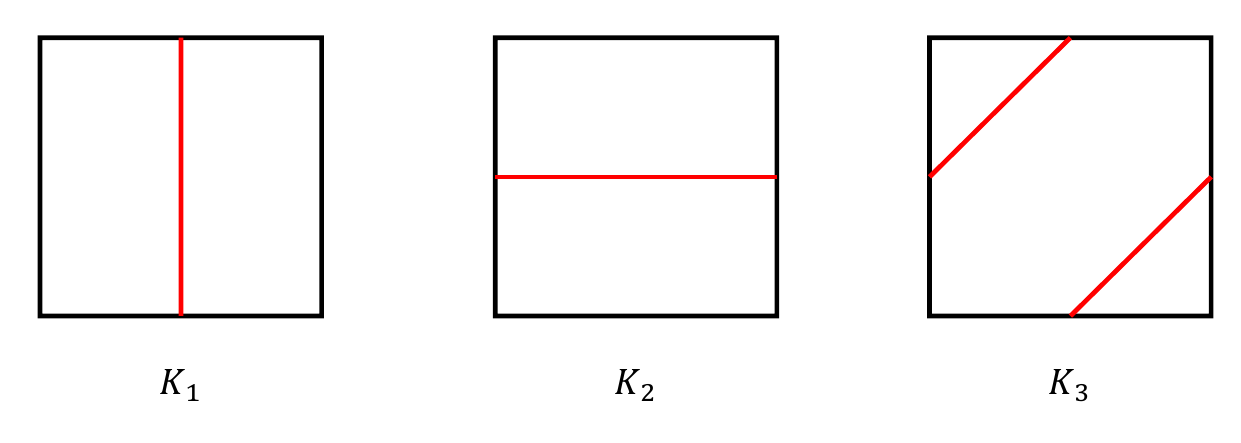}
\caption{}\label{fg3.2}
\end{figure}

According to \cite{BJ}, the algebra $SK_{q^{1/2}}(S_{1,1})$ is generated by $K_1,K_2,K_3$ subject to relations:
\begin{equation*}
\begin{split}
q^{-1/2}K_1 K_2 - q^{1/2}K_2 K_1 = (q^{-1} - q) K_3, \\
q^{-1/2}K_2 K_3 - q^{1/2}K_3 K_2 = (q^{-1} - q) K_1, \\
q^{-1/2}K_3 K_1 - q^{1/2}K_1 K_3 = (q^{-1} - q) K_2.
\end{split}
\end{equation*}
Let $P$ be the loop around the puncture in $S_{1,1}$, then
$$P = q^{-1/2}K_1 K_2 K_3 - q^{-1}K_1^{2} - qK_2^{2} - q^{-1}K_3^{2} + q + q^{-1}.$$

\begin{lmm}
There is an algebraic embedding $F: S_{q^{1/2}}(S_{1,1})\rightarrow \mathbb{C}_{q}[X_1^{\pm1}, X_2^{\pm1}, X_3^{\pm1}]$
such that
\begin{equation}
\begin{split}
F(K_1) &= [X_2 X_3] + [X_2^{-1}X_3^{-1}] + [X_2 X_3^{-1}],\\
F(K_2) &= [X_3 X_1] + [X_3^{-1}X_1^{-1}] + [X_3 X_1^{-1}],\\
F(K_3) &= [X_1 X_2] + [X_1^{-1}X_2^{-1}] + [X_1 X_2^{-1}],\\
F(P) &= [X_1^{2}X_2^{2}X_3^{2}] + [X_1^{-2}X_2^{-2}X_3^{-2}].
\end{split}
\end{equation}
\end{lmm}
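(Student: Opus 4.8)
The plan is to recognise $F$ as the Bonahon--Wong quantum trace $Tr^{q}_{\tau}$ of \cite{BW1} for the ideal triangulation $\tau$ of $S_{1,1}$ pictured in Figure \ref{fgggg3}, under the identification $\mathcal{T}^{q^{1/4}}_{\tau}(S_{1,1})=\mathbb{C}_{q}[X_1^{\pm1},X_2^{\pm1},X_3^{\pm1}]$ recorded above. Since $Tr^{q}_{\tau}$ is already known to be an injective algebra homomorphism, the whole content of the lemma is then the explicit evaluation of $Tr^{q}_{\tau}$ on the generators $K_1,K_2,K_3$ and on the peripheral element $P$: once these four identities are verified, the assertion that $F$ is an algebraic embedding is immediate. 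If one prefers to avoid quoting injectivity of the quantum trace, one can argue self-containedly: by \cite{BJ} the algebra $SK_{q^{1/2}}(S_{1,1})$ is \emph{presented} by $K_1,K_2,K_3$ subject to the three displayed skein relations, so defining an algebra homomorphism $F$ with the stated values only requires checking those three relations on the proposed images, after which injectivity has to be established separately; I sketch this route below.

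\emph{Step 1 (the homomorphism, and the curve formulas).} First I would verify inside $\mathbb{C}_{q}[X_1^{\pm1},X_2^{\pm1},X_3^{\pm1}]$ the identity
$$q^{-1/2}F(K_1)F(K_2)-q^{1/2}F(K_2)F(K_1)=(q^{-1}-q)F(K_3)$$
together with its two cyclic permutations. Each product $F(K_i)F(K_j)$ expands into nine monomials, which one brings to normal form using the commutation rules $X_1X_2=qX_2X_1$, $X_2X_3=qX_3X_2$, $X_3X_1=qX_1X_3$ and the normalisation $[X_{i_1}\cdots X_{i_k}]=q^{-\sum_{1\le j<l\le k}\sigma_{jl}}X_{i_1}\cdots X_{i_k}$. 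The point is that in the antisymmetrised combination the ``long'' monomials (those involving all three variables, or a square of one variable) cancel, and the surviving monomials reassemble exactly into $(q^{-1}-q)F(K_3)$. Conceptually this is nothing but Bonahon--Wong's state sum for $Tr^{q}_{\tau}$: each $K_i$ can be isotoped so as to cross only the two edges of $\tau$ other than $e_i$, once each, and summing over the admissible states of that curve produces precisely the three monomials displayed for $F(K_i)$. Either way, this shows $F$ is a well-defined algebra homomorphism taking the stated values on $K_1,K_2,K_3$.

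\emph{Step 2 (the peripheral element).} Next I would substitute these formulas into
$$F(P)=q^{-1/2}F(K_1)F(K_2)F(K_3)-q^{-1}F(K_1)^2-qF(K_2)^2-q^{-1}F(K_3)^2+q+q^{-1}$$
and simplify. Here the bookkeeping is heavier (the triple product contributes twenty-seven monomials), but one finds that all monomials of intermediate $X$-degree, together with the scalar terms, cancel, leaving only $[X_1^{2}X_2^{2}X_3^{2}]+[X_1^{-2}X_2^{-2}X_3^{-2}]$. This is forced by the general theory: the unique puncture $v$ of $S_{1,1}$ is incident to each of the three edges twice, so the central element it determines is $H_v=[X_1^{2}X_2^{2}X_3^{2}]$, and $Tr^{q}_{\tau}(P)=H_v+H_v^{-1}$.

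\emph{Step 3 (injectivity), and the main difficulty.} To see that $F$ is injective without invoking \cite{BW1}, first note that, using the skein relations to reorder the generators modulo lower-degree terms, the monomials $K_1^{a}K_2^{b}K_3^{c}$ ($a,b,c\in\mathbb{Z}_{\ge0}$) span $SK_{q^{1/2}}(S_{1,1})$. Now $\mathbb{C}_{q}[X_1^{\pm1},X_2^{\pm1},X_3^{\pm1}]$ is $\mathbb{Z}^{3}$-graded by the exponent vector of $X_1^{i}X_2^{j}X_3^{k}$; with respect to the linear form $\ell(i,j,k)=i+j+k$, the highest-$\ell$ part of $F(K_1)$ is the single monomial $[X_2X_3]$, and similarly $[X_3X_1]$ for $F(K_2)$ and $[X_1X_2]$ for $F(K_3)$. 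Hence the highest-$\ell$ part of $F(K_1^{a}K_2^{b}K_3^{c})$ is a nonzero monomial supported at $(b+c,\,a+c,\,a+b)$, and since $\left(\begin{smallmatrix}0&1&1\\1&0&1\\1&1&0\end{smallmatrix}\right)$ is invertible, distinct monomials $K_1^{a}K_2^{b}K_3^{c}$ have distinct leading exponents; their images are therefore linearly independent, so these monomials form a basis and $F$ is injective. The main obstacle is not conceptual but organisational: keeping the half-integer powers of $q$ in the brackets $[\cdot]$ consistent with the sign/orientation conventions of the triangulation in Figure \ref{fgggg3} throughout Steps 1 and 2, so that the large cancellations are transparent rather than a brute-force expansion.
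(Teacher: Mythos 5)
Your first route is precisely the paper's proof: the paper disposes of this lemma in a single sentence, observing that $F$ is the quantum trace $Tr^{q}_{\tau}$ of \cite{BW1} for the triangulation of Figure \ref{fgggg3}, so both the displayed formulas and the injectivity are imported wholesale from Bonahon--Wong. Your self-contained alternative is a genuinely different argument that the paper does not attempt: define $F$ on the Bullock--Przytycki presentation of $SK_{q^{1/2}}(S_{1,1})$ by checking the three $q$-commutator relations on the proposed images, derive the value on $P$ from the boundary formula, and prove injectivity by a leading-term argument. That last step is sound: the ordered monomials $K_1^{a}K_2^{b}K_3^{c}$ span by the presentation (reordering via the relations only introduces lower-degree terms), the top $\ell$-homogeneous part of $F(K_1^{a}K_2^{b}K_3^{c})$ is a nonzero scalar times $X_1^{b+c}X_2^{a+c}X_3^{a+b}$, and the exponent map $(a,b,c)\mapsto(b+c,a+c,a+b)$ is injective since its matrix has determinant $2$; hence the images are linearly independent and $F$ is injective. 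The only caveat is that Steps 1 and 2 are sketched rather than carried out -- the nine-term and twenty-seven-term expansions are asserted to cancel correctly, which they do (these are standard identities), but a complete write-up would need to fix the $\sigma_{ij}$ conventions and exhibit the cancellation. The paper's route buys brevity at the price of quoting the injectivity of the quantum trace; yours buys independence from that machinery at the price of the bookkeeping you acknowledge.
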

\begin{proof}
Actually $F$ is just  the quantum trace map constructed in Theorem 11 in \cite{BW1} if we regard $\mathbb{C}_{q}[X_1^{\pm1}, X_2^{\pm1}, X_3^{\pm1}]$ as the Chekhov-Fock square root algebra
associated to the ideal triangulation in Figure \ref{fgggg3} where $X_i$ corresponds to $e_i$ for $i=1,2,3$.
\end{proof}

Let $V$ be an $n$ dimensional vector space over the complex field with basis
$w_0, w_1,\dots, w_{n-1}$. We can define a representation
$\rho_{r_1,r_2,r_3}: \mathbb{C}_{q}[X_1^{\pm1}, X_2^{\pm1}, X_3^{\pm1}] \rightarrow End(V)$ such that
\begin{equation}
\begin{split}
&\rho_{r_1,r_2,r_3}(X_1)w_i = r_1 q^{i} w_i,\\
&\rho_{r_1,r_2,r_3}(X_2)w_i = r_2 q^{-i} w_{i+1},\\
&\rho_{r_1,r_2,r_3}(X_3)w_i = r_3 w_{i-1},
\end{split}
\end{equation}
where $r_1,r_2,r_3$ are nonzero complex numbers. We can get $\rho_{r_1,r_2,r_3}([X_1 X_2 X_3]) = r_1 r_2 r_3 q^{1/2} Id_V$.
\begin{lmm}
For any three nonzero complex numbers $r_1,r_2,r_3$,  we have $\rho_{r_1,r_2,r_3}$ is an irreducible representation of
$\mathbb{C}_{q}[X_1^{\pm1}, X_2^{\pm1}, X_3^{\pm1}]$. Furthermore every irreducible representation of $\mathbb{C}_{q}[X_1^{\pm1}, X_2^{\pm1}, X_3^{\pm1}]$
is isomorphic to a representation $\rho_{r_1,r_2,r_3}$, and
 $\rho_{r_1,r_2,r_3} \simeq \rho_{s_1,s_2,s_3}$ if and only if $$r_1^{n} = s_1^{n}, r_2^{n} = s_2^{n}, r_3^{n} = s_3^{n}, r_1r_2r_3 = s_1s_2s_3.$$
\end{lmm}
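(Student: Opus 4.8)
The plan is to prove this in two parts: first that each $\rho_{r_1,r_2,r_3}$ is irreducible, then that these exhaust all irreducibles up to isomorphism with the stated classification of when two of them coincide.

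\textbf{Irreducibility.} Fix nonzero $r_1, r_2, r_3$ and write $\rho = \rho_{r_1,r_2,r_3}$. Let $W \subseteq V$ be a nonzero invariant subspace. First I would observe that $\rho(X_1)$ acts diagonally on the basis $w_0,\dots,w_{n-1}$ with eigenvalues $r_1 q^i$; since $q$ is a primitive $n$-th root of unity and $n$ is odd, these $n$ eigenvalues are distinct, so the eigenspaces are exactly the lines $\mathbb{C}w_i$. An invariant subspace is then a sum of a subset of these lines: $W = \mathrm{span}\{w_i : i \in I\}$ for some nonempty $I \subseteq \mathbb{Z}/n$. Next, $\rho(X_2)$ sends $w_i \mapsto r_2 q^{-i} w_{i+1}$, so invariance under $X_2$ forces $I$ to be closed under $i \mapsto i+1$; since $\mathbb{Z}/n$ is cyclically generated by $1$, this gives $I = \mathbb{Z}/n$, i.e.\ $W = V$. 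Hence $\rho$ is irreducible. (The relation $\rho([X_1X_2X_3]) = r_1 r_2 r_3 q^{1/2}\,\mathrm{Id}_V$ noted just above is what makes the three defining relations $X_iX_j = qX_jX_i$ consistent, and is the scalar that will distinguish isomorphism classes; I would record it here.)

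\textbf{Exhaustion.} Let $\sigma : \mathbb{C}_q[X_1^{\pm1},X_2^{\pm1},X_3^{\pm1}] \to \mathrm{End}(U)$ be an arbitrary irreducible representation on a finite-dimensional $U$. The element $X_1^n$ is central (as $X_1X_j = q^{\pm}X_jX_1$ and $q^n=1$, so $X_1^n$ commutes with every generator), hence by Schur it acts as a scalar $t_1 \neq 0$; pick $r_1$ with $r_1^n = t_1$, and likewise get $t_2 = r_2^n$, $t_3 = r_3^n$ from $X_2^n, X_3^n$. Also $[X_1X_2X_3]$ is central, so it acts as a scalar, which one checks equals $r_1 r_2 r_3 q^{1/2}$ up to the $n$-th root ambiguity — I would use this to pin down the relation between the $r_i$. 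To produce the isomorphism I would diagonalize $\sigma(X_1)$: its eigenvalues are $n$-th roots of $t_1$, i.e.\ lie in $\{r_1 q^i\}$, and a standard argument (using $X_2$ to cyclically permute eigenspaces and irreducibility to force each eigenspace one-dimensional, cf.\ the Bonahon--Liu argument for $\mathbb{C}[X^{\pm1},Y^{\pm1}]_{q^{1/2}}$ recalled earlier) shows $\dim U = n$ and that $\sigma$ is, after choosing an eigenvector of $X_1$ and rescaling, exactly $\rho_{r_1,r_2,r_3}$ in the given basis.

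\textbf{The classification of coincidences.} For the "if" direction, given $r_i^n = s_i^n$ and $r_1r_2r_3 = s_1s_2s_3$, I would build an explicit intertwiner: since $r_i/s_i$ is an $n$-th root of unity, write $r_i/s_i = q^{m_i}$ and define a map $w_i \mapsto c_i w_i$ (diagonal) composed with a cyclic shift $w_i \mapsto w_{i+\ell}$, choosing the shift and the scalars $c_i$ so that the three generators are matched; the constraint $r_1r_2r_3 = s_1s_2s_3$ is exactly what makes this system solvable (it kills the residual scalar obstruction coming from the central element $[X_1X_2X_3]$). For "only if": if $\rho_{r_1,r_2,r_3}\simeq\rho_{s_1,s_2,s_3}$ then central elements must act by the same scalars, giving $r_i^n = s_i^n$ from $X_i^n$ and $r_1r_2r_3 q^{1/2} = s_1s_2s_3 q^{1/2}$ from $[X_1X_2X_3]$, hence $r_1r_2r_3 = s_1s_2s_3$.

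I expect the main obstacle to be the bookkeeping in the "if" direction of the classification: one must simultaneously track the cyclic shift on indices and the diagonal rescaling, and verify that $X_2$ and $X_3$ (which involve the position-dependent factors $q^{-i}$ and the shifts $i\mapsto i\pm1$) are intertwined correctly. This is where the product condition $r_1r_2r_3 = s_1s_2s_3$ enters in a nontrivial way rather than as a formality, and getting the exponents of $q$ to line up is the one genuinely delicate computation.
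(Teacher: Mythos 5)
Your argument is correct; note that the paper itself states this lemma without proof (it is the square-root analogue of Lemma \ref{lmm2.9}, which is quoted from Bonahon--Wong--Yang), so there is no "paper proof" to compare against, and your write-up actually supplies the missing argument. The irreducibility step is complete as written: $\rho(X_1)$ has the $n$ distinct eigenvalues $r_1q^i$, so any invariant subspace is spanned by a subset of the $w_i$, and invariance under the invertible shift $\rho(X_2)$ forces that subset to be all of $\mathbb{Z}/n$. The "if and only if" classification is also right, and your predicted "delicate computation" does close up: writing $r_i = s_iq^{m_i}$, an intertwiner of the form $w_i\mapsto c_iw_{i+\ell}$ forces $\ell=m_1$ from $X_1$, forces $c_{i+1}=c_iq^{-(m_1+m_2)}$ from $X_2$ and $c_i = c_{i-1}q^{m_3}$ from $X_3$, and these two recursions are compatible precisely when $m_1+m_2+m_3\equiv 0\pmod n$, i.e.\ when $r_1r_2r_3=s_1s_2s_3$; the $n$-periodicity $c_n=c_0$ holds because $q^{n(m_1+m_2)}=1$. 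The only place you defer to a "standard argument" is the exhaustion step; the cleanest way to discharge it is to observe that the central element $X_1X_2X_3$ acts by a scalar on any irreducible $\sigma$, so $\sigma(X_3)$ is determined by $\sigma(X_1)$ and $\sigma(X_2)$, and $U$ is then an irreducible module over the rank-two quantum torus generated by $X_1^{\pm1},X_2^{\pm1}$, to which the Bonahon--Liu classification recalled in Section 3.2 applies directly (giving $\dim U=n$ and the eigenbasis); a diagonal rescaling of that basis, using $r_2^n=t_2$ and $q^{n(n-1)/2}=1$ for odd $n$, then converts $\sigma(X_2)$ into the form $w_i\mapsto r_2q^{-i}w_{i+1}$, after which adjusting $r_3$ by an $n$-th root of unity matches the scalar of $X_1X_2X_3$ and pins down $\sigma\simeq\rho_{r_1,r_2,r_3}$.
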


For any $\gamma \in \mathcal{X}_{SL(2,\mathbb{C})}(S_{1,1})$ and a nonzero complex number $p$ such that $T_n(p) = - {\rm{Trace}}\gamma(P)$ where $P$ is the loop going around the only puncture in $S_{1,1}$, let $t_i = -{\rm{Trace}}\gamma(K_i)$ for $i= 1,2,3$.
According to \cite{NT}, we have
$$T_n(p) = -t_1t_2t_3 - t_1^{2} - t_2^{2} - t_3^{2} + 2.$$

\begin{lmm}\label{lmm3.10}
Let $x, y$ be two indeterminates such that $x y = q^{-2} y x$, then $T_n(x + x^{-1} + y) = x^{-n} + x^{n} + y^n$ for $n\geq1$.
\end{lmm}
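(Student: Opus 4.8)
The plan is to prove the identity $T_n(x+x^{-1}+y) = x^n + x^{-n} + y^n$ in the ring $\mathbb{C}\langle x^{\pm 1}, y^{\pm 1}\rangle/(xy - q^{-2}yx)$ by a recursion on $n$ that mirrors the defining recursion of the Chebyshev polynomials, namely $T_{n+1}(z) = z\,T_n(z) - T_{n-1}(z)$ with $T_0(z) = 2$ and $T_1(z) = z$. First I would record the base cases: for $n=1$ the claim is the definition of $z := x + x^{-1} + y$, and for $n=0$ one checks $T_0 = 2 = x^0 + x^{-0} + y^0$ (with the convention $y^0 = 1$), which forces the right normalization of the Chebyshev polynomials of the first kind. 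So the entire content is the inductive step.

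For the inductive step, assume $T_m(z) = x^m + x^{-m} + y^m$ for all $m \le n$ and compute $z\,T_n(z) - T_{n-1}(z)$. Expanding,
\begin{equation*}
z\,T_n(z) = (x + x^{-1} + y)(x^n + x^{-n} + y^n).
\end{equation*}
The key computational input is the commutation rule $xy = q^{-2}yx$, from which one derives by induction $x^a y^b = q^{-2ab} y^b x^a$, and in particular $x\,y^n = q^{-2n} y^n x$ and $y\,x^n = q^{2n} x^n y$, $y\,x^{-n} = q^{-2n} x^{-n} y$. The crucial point to verify is that all the "cross terms'' $x\cdot y^n$, $x^{-1}\cdot y^n$, $y\cdot x^n$, $y\cdot x^{-n}$ that appear when expanding the product are \emph{not} the ones that survive — rather, the terms that combine to give the answer are the "diagonal'' ones $x\cdot x^n = x^{n+1}$, $x^{-1}\cdot x^{-n} = x^{-(n+1)}$, $y\cdot y^n = y^{n+1}$, together with the leftover $x\cdot x^{-n} = x^{-(n-1)}$, $x^{-1}\cdot x^n = x^{n-1}$, and one must check that the remaining genuinely mixed monomials cancel. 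I would show that after collecting, $z\,T_n(z) = (x^{n+1} + x^{-(n+1)} + y^{n+1}) + (x^{n-1} + x^{-(n-1)} + y^{n-1}) + (\text{mixed remainder})$, and then subtract $T_{n-1}(z) = x^{n-1} + x^{-(n-1)} + y^{n-1}$ to leave $x^{n+1} + x^{-(n+1)} + y^{n+1}$ plus the mixed remainder.

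The main obstacle — and the only real subtlety — is showing that the mixed remainder vanishes. Concretely, the mixed monomials coming from $z T_n(z)$ are $x y^n + x^{-1} y^n + y x^n + y x^{-n}$ (here using that $x^{\pm 1} x^{\mp n} $ already got absorbed above), and one must see these sum to zero. This is where one cannot avoid using the $q$-commutation: for instance $x y^n = q^{-2n} y^n x$ while $y x^{-n} = q^{-2n} x^{-n} y$, and $x^{-1} y^n = q^{2n} y^n x^{-1}$ while $y x^n = q^{2n} x^n y$, so the remainder regroups as $q^{-2n}(y^n x + x^{-n} y) + q^{2n}(y^n x^{-1} + x^n y)$; pairing $y^n x$ with $x^n y$ and $y^n x^{-1}$ with $x^{-n} y$ and using $xy = q^{-2}yx$ once more shows each pair cancels against its partner. (One should double-check the exact exponents with a small case, say $n=2$, before trusting the general bookkeeping.) Once the remainder is confirmed to be $0$, the induction closes and the lemma follows; I expect this to be a short but exponent-sensitive calculation, so I would present the $n=2$ verification explicitly as a sanity check and then give the general step.
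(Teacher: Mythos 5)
Your induction cannot work as set up, because the inductive hypothesis is false: the identity $T_m(x+x^{-1}+y)=x^m+x^{-m}+y^m$ does \emph{not} hold for the intermediate degrees $1<m<n$. Already for $m=2$ one has
\begin{equation*}
T_2(z)=z^2-2=x^2+x^{-2}+y^2+(1+q^{2})\,xy+(1+q^{-2})\,x^{-1}y ,
\end{equation*}
and since $n$ is odd we never have $q^{2}=-1$, so the cross terms survive. The lemma is only true for the single exponent $m=n$ for which $q$ is a primitive $n$-th root of unity; this hypothesis is essential, and your argument never invokes it anywhere, which is a structural sign that the proof cannot be right (the statement is simply false for generic $q$). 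The cross terms do not cancel step by step --- they accumulate through the recursion, and only at degree exactly $n$ do all of their coefficients (which are sums of powers of $q^{2}$ over a full period, in the spirit of vanishing Gaussian binomials) collapse to zero. This is precisely the ``miraculous cancellation'' phenomenon, and any correct proof must track the full Laurent-polynomial coefficients $P_{m,j}(x)$ of $y^{j}$ in $T_m(z)$ through the recursion $P_{m+1,j}(x)=(x+x^{-1})P_{m,j}(x)+P_{m,j-1}(q^{2}x)-P_{m-1,j}(x)$ and show they vanish at $m=n$ because $q^{2}$ is a primitive $n$-th root of unity.

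There is a second, independent error in the final cancellation you propose. The four ``mixed'' terms $xy^{n}$, $x^{-1}y^{n}$, $yx^{n}$, $yx^{-n}$ normalize (using $q^{2n}=1$) to the four monomials $xy^{n}$, $x^{-1}y^{n}$, $x^{n}y$, $x^{-n}y$, which are pairwise linearly independent in the quantum torus for $n>1$; ``pairing $y^{n}x$ with $x^{n}y$'' cannot make them cancel, since distinct monomials with coefficient $+1$ add rather than annihilate. Your own suggested sanity check at $n=2$ would have exposed both problems. To repair the argument you would need either the explicit coefficient bookkeeping sketched above, or an indirect route (e.g.\ verifying the identity on a faithful family of $n$-dimensional representations $\rho_{u,v}$ of the quantum torus together with a leading-coefficient comparison); the statement cannot be reached by the naive three-term Chebyshev induction.
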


For any given character $[\gamma] \in \mathcal{X}_{SL(2,\mathbb{C})}(S_{1,1})$, the following Theorem offers a representation of $SK_{q^{1/2}}(S_{1,1})$ whose classical shadow is $[\gamma]$.

\begin{thm}\label{thm3.5}
With the above notation, we have $\rho_{r_1,r_2,r_3} F$ is a representation of $SK_{q^{1/2}}(S_{1,1})$. The classical shadow of $\rho_{r_1,r_2,r_3} F$ is $\gamma$
 and $\rho_{r_1,r_2,r_3} F(P) = pId_V$ if and only if we have the following equations:
 \begin{equation}\label{eq3.6}
 \begin{split}
 &r_2^{n} r_3^{n} + r_2^{-n} r_3^{-n} + r_2^{n} r_3^{-n} =  -t_1,\\
 &r_3^{n} r_1^{n} + r_3^{-n} r_1^{-n} + r_3^{n} r_1^{-n} =  -t_2,\\
 &r_1^{n} r_2^{n} + r_1^{-n} r_2^{-n} + r_1^{n} r_2^{-n} =  -t_3,\\
 &r_1^{2} r_2^{2} r_3^{2} q + r_1^{-2} r_2^{-2} r_3^{-2} q^{-1} =  p.
 \end{split}
 \end{equation}
\end{thm}

 \begin{proof}
It is easy to see
\begin{equation*}
\begin{split}
[X_2 X_3][X_2 X_3^{-1}] = q^{-2} [X_2 X_3^{-1}][X_2 X_3],\\
[X_3 X_1][X_3 X_1^{-1}] = q^{-2} [X_3 X_1^{-1}][X_3 X_1],\\
 [X_1 X_2][X_1 X_2^{-1}] = q^{-2} [X_1 X_2^{-1}][X_1 X_2].
\end{split}
\end{equation*}

From Lemma \ref{lmm3.10},  we get
\begin{equation}\label{eq3.11}
\begin{split}
T_n(\rho_{r_1,r_2,r_3} F(K_1)) &= T_n(\rho_{r_1,r_2,r_3}([X_2 X_3] + [X_2^{-1}X_3^{-1}] + [X_2 X_3^{-1}]))\\
&=  \rho_{r_1,r_2,r_3}(T_n([X_2 X_3] + [X_2^{-1}X_3^{-1}] + [X_2 X_3^{-1}]))\\
&= \rho_{r_1,r_2,r_3}([X_2^{n} X_3^{n}] + [X_2^{-n}X_3^{-n}] + [X_2^{n} X_3^{-n}])\\
&= \rho_{r_1,r_2,r_3}(-(X_2^{n} X_3^{n} + X_2^{-n}X_3^{-n} + X_2^{n} X_3^{-n}))\\
&= -(r_2^{n} r_3^{n} + r_2^{-n} r_3^{-n} + r_2^{n} r_3^{-n})Id_{V}.
\end{split}
\end{equation}
Similarly we can get
\begin{equation}\label{eq3.12}
\begin{split}
T_n(\rho_{r_1,r_2,r_3} F(K_2)) = -(r_3^{n} r_1^{n} + r_3^{-n} r_1^{-n} + r_3^{n} r_1^{-n})Id_{V},\\
T_n(\rho_{r_1,r_2,r_3} F(K_3)) = -(r_1^{n} r_2^{n} + r_1^{-n} r_2^{-n} + r_1^{n} r_2^{-n})Id_{V}.
\end{split}
\end{equation}
And
\begin{equation}\label{eq3.13}
\begin{split}
\rho_{r_1,r_2,r_3} F(P) &= \rho_{r_1,r_2,r_3}([X_1^{2}X_2^{2}X_1^{2}] + [X_1^{-2}X_2^{-2}X_1^{-2}])\\
&= \rho_{r_1,r_2,r_3}([X_1^{2}X_2^{2}X_1^{2}]) + \rho_{r_1,r_2,r_3}([X_1^{-2}X_2^{-2}X_1^{-2}])\\
&= (r_1^{2}r_2^{2}r_3^{2}q + r_1^{-2}r_2^{-2}r_3^{-2}q^{-1})Id_{V}.
\end{split}
\end{equation}

From equations \ref{eq3.11},\ref{eq3.12},\ref{eq3.13} and the fact that $K_1,K_2,K_3$ generate the algebra $SK_{-1}(S_{1,1})$, we can get the
conclusions in Theorem \ref{thm3.5}.

 \end{proof}

\begin{rem}
At first glance, it seems like, in equation \ref{eq3.6}, we may not be able to get solutions, but actually the forth one is a consequence of first three equations because
we have the relation $T_n(p) = -t_1t_2t_3 - t_1^{2} - t_2^{2} - t_3^{2} + 2$.
In fact, to get solutions, we only need to solve equations:
\begin{equation*}
 \begin{split}
 &y z + y^{-1} z^{-1} + y z^{-1} =  -t_1,\\
 &z x + z^{-1} x^{-1} + z x^{-1} =  -t_2,\\
 &x y + x^{-1} y^{-1} + x y^{-1} =  -t_3.
 \end{split}
 \end{equation*}

\end{rem}

Let $Y_i = X_i^{2}$ for $i= 1, 2, 3$, then $$Y_1Y_2 = q^4Y_2Y_1, Y_2Y_3 = q^4 Y_3Y_2, Y_3Y_1 = q^4Y_1Y_3, Y_iY_i^{-1} = Y_i^{-1}Y_i = 1.$$
The subalgebra of $\mathbb{C}_{q}[X_1^{\pm1}, X_2^{\pm1}, X_3^{\pm1}]$ generated by $Y_1^{\pm1}, Y_2^{\pm1}, Y_3^{\pm1}$ is  $\mathbb{C}_{q^4}[Y_1^{\pm1}, Y_2^{\pm1}, Y_3^{\pm1}]$.

Here we  recall a lemma from \cite{BW5} for irreducible representations for $\mathbb{C}_{q^4}[Y_1^{\pm1}, Y_2^{\pm1}, Y_3^{\pm1}]$.
Let $V$ be an $n$ dimensional vector space over  $\mathbb{C}$ with basis $w_0, w_1,\dots, w_{n-1}$. For any three nonzero complex numbers
$y_1, y_2, y_3$, define $\rho_{y_1,y_2,y_3}:\mathbb{C}_{q^4}[Y_1^{\pm1}, Y_2^{\pm1}, Y_3^{\pm1}] \rightarrow {\rm{End}}(V)$ such that
\begin{equation}\label{eq2.17}
\begin{split}
\rho_{y_1,y_2,y_3}(Y_1)(w_i) &= y_1 q^{4i}w_i,\\
\rho_{y_1,y_2,y_3}(Y_2)(w_i) &= y_2 q^{-2i}w_{i+1},\\
\rho_{y_1,y_2,y_3}(Y_3)(w_i) &= y_3 q^{-2i}w_{i-1}.\\
\end{split}
\end{equation}

\begin{lmm}[\cite{BW5}]\label{lmm2.9}
(1) For any three nonzero complex numbers $y_1, y_2, y_3$, the representation $\rho_{y_1,y_2,y_3}$ is irreducible.

(2) Every irreducible representation of $\mathbb{C}_{q^4}[Y_1^{\pm1}, Y_2^{\pm1}, Y_3^{\pm1}]$ is isomorphic to a representation  $\rho_{y_1,y_2,y_3}$.

(3) For $\rho_{y_1,y_2,y_3}$ and $\rho_{y_1^{'},y_2^{'},y_3^{'}}$, they are isomorphic if and only if
$y_1^{n} = (y^{'})^n, y_2^{n} = (y_2^{'})^n, y_3^{n} = (y_3^{'})^n $ and $y_1y_2y_3= y_1^{'}y_2^{'}y_3^{'}$.
\end{lmm}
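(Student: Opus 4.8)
The plan is to prove the three statements in turn, using repeatedly that $q$ is a primitive $n$-th root of unity with $n$ odd, so $\gcd(4,n)=1$, $q^n=1$, and $q^4$ is again a primitive $n$-th root of unity (hence has order exactly $n$). For part (1), fix nonzero $y_1,y_2,y_3$ and set $\rho=\rho_{y_1,y_2,y_3}$. The operator $\rho(Y_1)$ is diagonal in the basis $w_0,\dots,w_{n-1}$ with the $n$ pairwise distinct eigenvalues $y_1q^{4i}$, $0\le i\le n-1$, so every $\rho(Y_1)$-invariant subspace equals $\mathrm{span}\{w_i:i\in I\}$ for some $I\subseteq\mathbb{Z}/n$. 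If such a subspace is also $\rho(Y_2)$-invariant and $i\in I$, then $\rho(Y_2)w_i=y_2q^{-2i}w_{i+1}$ is a nonzero multiple of $w_{i+1}$, so $i+1\in I$; hence $I$ is empty or all of $\mathbb{Z}/n$, and $\rho$ is irreducible.

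For part (2) I would first isolate the relevant central elements: $Y_1^n,Y_2^n,Y_3^n$ commute with every $Y_j$ because $q^{4n}=1$, and $Y_1Y_2Y_3$ is central by a short computation exploiting the cyclic symmetry $1\mapsto2\mapsto3\mapsto1$ of the defining relations (which also gives $Y_1Y_2Y_3=Y_2Y_3Y_1=Y_3Y_1Y_2$). Given an irreducible $\pi\colon\mathbb{C}_{q^4}[Y_1^{\pm1},Y_2^{\pm1},Y_3^{\pm1}]\to\mathrm{End}(W)$, Schur's lemma yields nonzero scalars with $\pi(Y_i^n)=c_i\,\mathrm{Id}$ and $\pi(Y_1Y_2Y_3)=c\,\mathrm{Id}$. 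Then $\pi(Y_3)=c\,\pi(Y_2)^{-1}\pi(Y_1)^{-1}$ lies in the image of the two-variable sub-quantum-torus generated by $Y_1^{\pm1},Y_2^{\pm1}$, so $\pi$ and its restriction to that subalgebra have the same invariant subspaces and hence that restriction is again irreducible. Now I would either cite the Bonahon-Liu classification of the irreducibles of $\mathbb{C}_{q^4}[Y_1^{\pm1},Y_2^{\pm1}]$ recalled earlier \cite{BX}, or reproduce the argument: diagonalize $\pi(Y_1)$, whose eigenvalues lie in $\{y_1q^{4i}\}$ for a chosen $n$-th root $y_1$ of $c_1$; since $\pi(Y_2)$ is invertible and carries each eigenspace cyclically to the next, all eigenspaces have a common dimension $d$, and the span of one cyclic $\pi(Y_2)$-orbit of an eigenvector is invariant, hence all of $W$, forcing $d=1$ and $\dim W=n$. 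A diagonal rescaling of this orbit brings $\pi(Y_1),\pi(Y_2)$ into the exact shape of the defining formulas~\eqref{eq2.17} for a suitable $y_2$ with $y_2^n=c_2$, and then $\pi(Y_3)=c\,\pi(Y_2)^{-1}\pi(Y_1)^{-1}$ is forced to be the third line of~\eqref{eq2.17} with $y_3:=c\,y_1^{-1}y_2^{-1}q^{-2}$; thus $\pi\cong\rho_{y_1,y_2,y_3}$.

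For part (3), one implication is immediate: an intertwiner between $\rho_{y_1,y_2,y_3}$ and $\rho_{y_1',y_2',y_3'}$ conjugates the scalar operators $\rho(Y_i^n)=y_i^n\,\mathrm{Id}$ and $\rho(Y_1Y_2Y_3)=y_1y_2y_3q^2\,\mathrm{Id}$, forcing $y_i^n=(y_i')^n$ and $y_1y_2y_3=y_1'y_2'y_3'$. Conversely, write $y_i'=\zeta_iy_i$ with $\zeta_i^n=1$; the product condition becomes $\zeta_1\zeta_2\zeta_3=1$. Choose $a$ with $q^{4a}=\zeta_1$ (possible since $q^4$ generates the group of $n$-th roots of unity), put $t:=\zeta_2q^{2a}$ so that $t^n=1$, and check that $\Phi(w_i):=t^iw_{i-a}$, a well-defined linear automorphism of $V$ modulo $n$, intertwines the two representations: the $Y_1$- and $Y_2$-relations hold by the choices of $a$ and $t$, while the $Y_3$-relation collapses precisely to $\zeta_1\zeta_2\zeta_3=1$. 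Alternatively one can invoke that the quantum torus is Azumaya over its center, so two irreducibles with the same central character---here pinned down exactly by the data $y_i^n$ and $y_1y_2y_3$---must be isomorphic.

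The only place that needs care rather than a new idea is the normalization step in part (2): arranging the rescaling of the cyclic $\pi(Y_2)$-orbit so that the action reproduces~\eqref{eq2.17} on the nose rather than merely up to an automorphism, and verifying this is consistent going once around the orbit, which uses small numerical identities such as $q^{2n}=1$ and $(n-1)(n-2)\equiv 2\pmod n$ together with the fact that $q^4$ has order exactly $n$. If instead one imports the two-variable classification from Bonahon-Liu, the remaining work is just tracking $Y_3$ through the isomorphism, which is routine; everything else is standard manipulation in a quantum torus at a root of unity.
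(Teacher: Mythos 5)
Your proposal is correct, but there is nothing in the paper to compare it against: Lemma \ref{lmm2.9} is simply imported from \cite{BW5} and the paper supplies no proof at all, so you have done strictly more than the text does. I checked the details of your argument and they all go through. In part (1) the eigenvalues $y_1q^{4i}$ of $\rho(Y_1)$ are indeed pairwise distinct because $n$ is odd, so $q^4$ has order exactly $n$, and the shift action of $Y_2$ then forces any nonzero invariant subspace to be everything. In part (2) the centrality of $Y_1Y_2Y_3$ and the identity $\rho(Y_1Y_2Y_3)=y_1y_2y_3q^2\,\mathrm{Id}$ are correct (one computes $Y_1Y_2Y_3w_i=y_1y_2y_3q^{-2i-2(i-1)+4i}w_i$), the reduction to the two-variable quantum torus generated by $Y_1^{\pm1},Y_2^{\pm1}$ is legitimate since $\pi(Y_3)$ lies in the algebra they generate together with scalars, and the rescaling of the $\pi(Y_2)$-orbit closes up consistently because the accumulated factor around the cycle is $(b/y_2)^nq^{n(n-1)}=(b/y_2)^n$, which equals $1$ for a suitable choice of the $n$-th root $y_2$ of $c_2$; your formula $y_3=c\,y_1^{-1}y_2^{-1}q^{-2}$ is exactly what the third line of \eqref{eq2.17} demands. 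In part (3) the intertwiner $\Phi(w_i)=t^iw_{i-a}$ with $q^{4a}=\zeta_1$ and $t=\zeta_2q^{2a}$ satisfies the $Y_1$- and $Y_2$-relations by construction, is well defined mod $n$ since $t^n=1$, and the $Y_3$-relation reduces to $\zeta_1\zeta_2\zeta_3=1$, which is precisely the product condition. The only caveat worth recording is that your part (2) leans on the Bonahon--Liu classification for the rank-two quantum torus (applied with parameter $q^4$); since the paper already recalls that classification in its Section 3.2, this is an acceptable dependency, and your write-up would serve as a complete substitute for the missing proof.
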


\section{Calculation of intertwiners for the closed torus }\label{seccece}


\subsection{Construction of intertwiners for the closed torus}
The mapping class group of  the closed torus is $SL(2,\mathbb{Z})$ \cite{BM}.
For any $A=
\begin{pmatrix}
  a & b\\
  c & d
\end{pmatrix} \in SL(2,\mathbb{Z})$, we hope to find invariant characters under $A$.
For a $[\gamma]\in \mathcal{X}_{SL(2,\mathbb{C})}(T^2)$, we choose a representative with
$\gamma(\alpha)$=
$\begin{pmatrix}
  \lambda_{1} & 0\\
  0 & \lambda_{1}^{-1}
\end{pmatrix}$ and
$\gamma(\beta)$=
$\begin{pmatrix}
  \lambda_{2} & 0\\
  0 & \lambda_{2}^{-1}
\end{pmatrix}$ where  $\alpha$ and $\beta$  denote  loops $(1,0)$ and $(0,1)$ in $\mathbb{R}^2/\mathbb{Z}^2$ respectively. We have
$[\gamma]$ is invariant under $A$ if and only if ${\rm{Trace}}(\gamma(A_{*}(z))) = {\rm{Trace}}(\gamma(z))$ for all $z\in \pi_{1}(T^2)$.
For any $(k_1, k_2)\in\mathbb{Z}\oplus\mathbb{Z}$, we have
\begin{equation*}
\begin{split}
A_*(k_1\alpha+ k_2\beta) = (k_1, k_2)
\begin{pmatrix}
  a & b\\
  c & d
\end{pmatrix}
=(k_1 a+ k_2 c, k_1 b+ k_2 d)
=(k_1 a+ k_2 c)\alpha + (k_1 b+ k_2 d)\beta,
\end{split}
\end{equation*}
\begin{equation*}
\begin{split}
\gamma (A_*(k_1\alpha+ k_2\beta))
=&\gamma[(k_1 a+ k_2 c)\alpha + (k_1 b+ k_2 d)\beta]
=\begin{pmatrix}
  \lambda_{1} & 0\\
  0 & \lambda_{1}^{-1}
\end{pmatrix}^{k_1 a+ k_2 c}
\begin{pmatrix}
  \lambda_{2} & 0\\
  0 & \lambda_{2}^{-1}
\end{pmatrix}^{k_1 b+ k_2 d}\\
=&\begin{pmatrix}
  \lambda_{1}^{k_1 a+ k_2 c}\lambda_2^{k_1 b+ k_2 d} & 0\\
  0 & \lambda_{1}^{-(k_1 a+ k_2 c)}\lambda_{2}^{-(k_1 b+ k_2 d)}
\end{pmatrix},
\end{split}
\end{equation*}
\begin{equation*}
\gamma(k_1\alpha+ k_2\beta)=
\begin{pmatrix}
  \lambda_{1} & 0\\
  0 & \lambda_{1}^{-1}
\end{pmatrix}^{k_1}
\begin{pmatrix}
  \lambda_{2} & 0\\
  0 & \lambda_{2}^{-1}
\end{pmatrix}^{k_2}\\
=\begin{pmatrix}
  \lambda_{1}^{k_1}\lambda_2^{k_2} & 0\\
  0 & \lambda_{1}^{-k_1}\lambda_{2}^{-k_2}
\end{pmatrix}.
\end{equation*}
Then it is easy to show that $[\gamma]$ is $A$-invariant if and only if $\lambda_1=\lambda_1^{a}\lambda_2^{b}, \lambda_2=\lambda_1^{c}\lambda_2^{d}$ or
$\lambda_1=\lambda_1^{-a}\lambda_2^{-b}, \lambda_2=\lambda_1^{-c}\lambda_2^{-d}$.

And
$A$ also induces two algebra isomorphisms $F_{A,+}$ and $F_{A,-}$  from $\mathbb{C}[X^{\pm1},Y^{\pm1}]_{q^{1/2}}$ to itself defined by
$$F_{A,+}(\theta_{(i,j)})=\theta_{(i,j)A}, F_{A,-}(\theta_{(i,j)})=\theta_{(-i,-j)A}.$$
 Then $F_{A,+}$ and $F_{A,-}$ are well-defined becasue
$$\theta_{(i,j)A}\theta_{(k,l)A} = q^{1/2\begin{bmatrix}
(i,j)A\\
 (k,l)A
\end{bmatrix}} \theta_{(i+k, j+l)A}
=q^{1/2\begin{bmatrix}
i & j\\
k & l
\end{bmatrix}[A]} \theta_{(i+k, j+l)A}
=q^{1/2\begin{bmatrix}
i & j\\
k & l
\end{bmatrix}} \theta_{(i+k, j+l)A},
$$
and similarly
$$\theta_{(-i,-j)A}\theta_{(-k,-l)A}
=q^{1/2\begin{bmatrix}
i & j\\
k & l
\end{bmatrix}} \theta_{(-i-k, -j-l)A}.
$$

In section \ref{seeee}, we know there is an embedding
$$G_{q^{1/2}}: SK_{q^{1/2}}(T^{2})\rightarrow \mathbb{C}[X^{\pm 1},Y^{\pm 1}]_{q^{1/2}}.$$
For the following discussion we will  omit the subscript for $G_{q^{1/2}}$ when there is no confusion.

\begin{lmm}
The following diagram is commutative:
\begin{equation*}
\xymatrix{
&SK_{q^{1/2}}(T^{2})\ar[r]^{G}\ar[d]^{A_{\sharp}}&\mathbb{C}[X^{\pm 1},Y^{\pm 1}]_{q^{1/2}}\ar[d]^{F_A}\\
&SK_{q^{1/2}}(T^{2})\ar[r]^{G}&\mathbb{C}[X^{\pm 1},Y^{\pm 1}]_{q^{1/2}}
}
\end{equation*}
where $F_A$ is $F_{A,+}$ or $F_{A,-}$.
\end{lmm}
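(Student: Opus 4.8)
The plan is to verify the commutativity of the square on algebra generators, since both $A_\sharp$ and $F_A$ are algebra homomorphisms and $G$ is an algebra embedding. The natural generating set to use is the family of simple links $(i,j)_T$ with $\gcd(i,j)=1$ together with the Chebyshev polynomials that produce the non-primitive classes; but in fact it is cleanest to test the identity on \emph{all} $(i,j)_T$ directly, since $G$ already has an explicit closed form $G((i,j)_T) = \theta_{(i,j)} + \theta_{(-i,-j)}$. So the core computation is to show $F_A \circ G = G \circ A_\sharp$ as maps sending $(i,j)_T$ into $\mathbb{C}[X^{\pm1},Y^{\pm1}]_{q^{1/2}}$.

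First I would recall how $A_\sharp$ acts on $(i,j)_T$. Since $A \in SL(2,\mathbb{Z})$ is realized by a diffeomorphism of $T^2 = \mathbb{R}^2/\mathbb{Z}^2$ that is the linear map $A$ (or its inverse-transpose, depending on convention), the simple closed curve of slope $(i,j)$ is carried to the simple closed curve of slope $(i,j)A$, again with vertical framing, and the Chebyshev structure is preserved because $T_k$-cabling commutes with diffeomorphisms. Hence $A_\sharp((i,j)_T) = ((i,j)A)_T$ when $\gcd(i,j)=1$, and the same formula holds for all $(i,j)$ by applying $T_k$ to both sides. Then compute:
\begin{equation*}
G(A_\sharp((i,j)_T)) = G(((i,j)A)_T) = \theta_{(i,j)A} + \theta_{(-i,-j)A}.
\end{equation*}
On the other side, using the definition of $F_{A,+}$,
\begin{equation*}
F_{A,+}(G((i,j)_T)) = F_{A,+}(\theta_{(i,j)} + \theta_{(-i,-j)}) = \theta_{(i,j)A} + \theta_{(-i,-j)A},
\end{equation*}
which agrees. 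For $F_{A,-}$ one gets $\theta_{(-i,-j)A} + \theta_{(i,j)A}$, the same sum, so the square commutes with $F_A = F_{A,-}$ as well. The only thing that needs care here is that $F_{A,\pm}$ are \emph{well-defined} algebra maps (this is exactly the relation $\bigl[\begin{smallmatrix} i & j\\ k & l\end{smallmatrix}\bigr][A] = \bigl[\begin{smallmatrix} i & j\\ k & l\end{smallmatrix}\bigr]$ coming from $\det A = 1$, already recorded in the excerpt just before the lemma), so no extra work is needed.

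The main obstacle, and the place I would be most careful, is matching conventions: one must check that the diffeomorphism inducing $A$ on $\pi_1(T^2)$ sends the \emph{link} $(i,j)_T$ to $((i,j)A)_T$ and not to $((i,j)A^{-1})_T$ or $((i,j)A^T)_T$, since Frohman--Gelca's parametrization of simple links by row vectors $(i,j)$ and the left-versus-right action of $SL(2,\mathbb{Z})$ must be made compatible with the definition of $F_{A,\pm}$ as precomposition by $(i,j)\mapsto (i,j)A$. With the conventions fixed earlier in the paper — where $A_*$ acts on $(k_1,k_2)\in\mathbb{Z}\oplus\mathbb{Z}$ by right multiplication $(k_1,k_2)\mapsto (k_1,k_2)A$ — the two sides line up on the nose, and the sign ambiguity $\pm$ is precisely what is absorbed by having the two maps $F_{A,+}$ and $F_{A,-}$. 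Once the generators are checked, commutativity on all of $SK_{q^{1/2}}(T^2)$ follows since $\{(i,j)_T\}$ spans the skein algebra and all four maps in the square are $\mathbb{C}$-linear (indeed algebra homomorphisms).
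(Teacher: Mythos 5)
Your proposal is correct and follows essentially the same route as the paper: verify the identity on the spanning set $\{(i,j)_T\}$, establish $A_{\sharp}((i,j)_T) = ((i,j)A)_T$ by reducing to the primitive case via Chebyshev cabling, and compare both sides using the closed form $G((i,j)_T)=\theta_{(i,j)}+\theta_{(-i,-j)}$. The only cosmetic differences are that the paper proves $\gcd((i,j)A)=\gcd(i,j)$ explicitly by a determinant argument where you treat it as immediate from invertibility of $A$ over $\mathbb{Z}$, and that you dispatch $F_{A,-}$ at the same time by observing the two summands merely swap, whereas the paper writes out only the $F_{A,+}$ case.
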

\begin{proof}
We only prove the case when $F_A = F_{A,+}$.

First we show $A_{\sharp}((k,l)_{T})=((k,l)A)_{T}$.
Assume $\text{gcd}(k,l)=j$ and $k=k^{'}j, l= l^{'}j$, then we have $$A_{\sharp}((k,l)_T)= A_{\sharp}(T_j((k^{'},l^{'})))
= T_j(A_{\sharp}((k^{'},l^{'}))) = T_j((k^{'},l^{'})A) = T_j(ak^{'}+cl^{'}, bk^{'}+dl^{'}).$$
There exist integers $u,v$ such that $uk+vl = j$, then
$ \begin{bmatrix}
k & l\\
-v & u
\end{bmatrix} = j$ and $\det(\begin{pmatrix}
k & l\\
-v & u
\end{pmatrix}A) = j$. Then
$\det(\begin{pmatrix}
k & l\\
-v & u
\end{pmatrix}A) =
\begin{bmatrix}
ak+cl & bk+dl\\
-v^{'} & u^{'}
\end{bmatrix} = (ak+cl)u^{'} + (bk+dl)v^{'} =  j$. We also have $j|(ak+cl), j|(bk+dl)$. Thus $\text{gcd}(ak+cl,bk+dl) =j$ and $ak+cl = j(ak^{'}+cl^{'}), bk+dl = j(bk^{'}+dl^{'})$. Then
$$A_{\sharp}((k,l)_T)= T_j(ak^{'}+cl^{'}, bk^{'}+dl^{'}) =  (ak+cl,bk+dl)_{T} = ((k,l)A)_T.$$

So
$G A_{\sharp}((k,l)_T) = G(((k,l)A)_T) = \theta_{(k,l)A} + \theta_{(k,l)A}^{-1}$ and $F_AG((k,l)_T)= F_A(\theta_{(k,l)}+ \theta_{(k,l)}^{-1}) = \theta_{(k,l)A} + \theta_{(k,l)A}^{-1}$. We have $G A_{\sharp} = F_AG$ because all $(k,l)_T$ span the skein algebra.
\end{proof}

\begin{equation}\label{EQ2.6}
\xymatrix{
&SK_{q^{1/2}}(T^{2})\ar[r]^{G}\ar[d]^{A_{\sharp}}&\mathbb{C}[X^{\pm 1},Y^{\pm 1}]_{q^{1/2}}\ar[r]^{\rho_{u,v}}\ar[d]^{F_A}&End(V)\ar@{.>}[d]^{G_{\Lambda}}\\
&SK_{q^{1/2}}(T^{2})\ar[r]^{G}&\mathbb{C}[X^{\pm 1},Y^{\pm 1}]_{q^{1/2}}\ar[r]^{\rho_{u,v}}&End(V)
}
\end{equation}

 The following two Theorems give the intertwiners for the closed surface for all the diffeomorphisms. We will give explicit formulas for these intertwiners and their Trace in the following subsections.

\begin{thm}\label{thm2.3}
In the diagram \ref{EQ2.6}, suppose
$A=\begin{pmatrix}
a & b\\
c & d
\end{pmatrix} \in SL(2,\mathbb{Z})$ and $F_A = F_{A,+}$. Let $[\gamma]\in \mathcal{X}_{SL(2,\mathbb{C})}(T^2)$ with
$\gamma(\alpha)$=
$\begin{pmatrix}
  \lambda_{1} & 0\\
  0 & \lambda_{1}^{-1}
\end{pmatrix}$ and 
$\gamma(\beta)$=
$\begin{pmatrix}
  \lambda_{2} & 0\\
  0 & \lambda_{2}^{-1}
\end{pmatrix}$ where $\lambda_1=\lambda_1^{a}\lambda_2^{b}, \lambda_2=\lambda_1^{c}\lambda_2^{d}$, and let $u,v$ be two complex numbers such that $u^{n}=-\lambda_{1}$, $v^{n}=-\lambda_{2}$. We have the following conclusions:

(a) $[\gamma]$ is invariant under $A$;

(b) the classical shadow of $\rho_{u,v}G$ is $[\gamma]$;

(c) $\rho_{u,v}F_A\simeq \rho_{u,v}$.

(d) From (c), we know there exists an  interwiner $\Lambda_{n,+}$ such that $\rho_{u,v} F_A(Z) = \Lambda_{n,+} \rho_{u,v}(Z) \Lambda^{-1}_{n,+}$ for
all $Z\in \mathbb{C}[X^{\pm 1},Y^{\pm 1}]_{q^{1/2}}$. Then this intertwiner induces an intertwiner between two irreducible representations of the skein algebra.

\end{thm}

\begin{proof}
(a) and (b) are already shown in the previous discussion.

To prove (c), we have
\begin{equation*}
\begin{split}
 &\rho_{u,v}F_A(X^n) = \rho_{u,v}F_A(\theta_{(n,0)}) = \rho_{u,v}(\theta_{(n,0)A})\\
= &\rho_{u,v}(\theta_{(na,nb)}) = \rho_{u,v}((-1)^{ab}X^{na}Y^{nb})\\
=&(-1)^{ab}u^{na}v^{nb}Id_{V} = (-1)^{ab}(-\lambda_1)^{a}(-\lambda_2)^{b}Id_{V}\\
=&(-1)^{ab+a+b}\lambda_1^{a}\lambda_2^{b}Id_{V} = -\lambda_1 Id_V = u^n Id_V.
\end{split}
\end{equation*}
Similarly we can show $\rho_{u,v}F_A(Y^n) = v^n Id_V$, thus $\rho_{u,v}F_A\simeq \rho_{u,v}$.

(d) If $\lambda_1\neq \pm1$ or $\lambda_1\neq \pm1$,   Theorem \ref{thm2.2} implies that $\Lambda_{n,+}$ itself is the intertwiner between two irreducible representations of the skein algebra.
If $\lambda_1 = \pm1$ and $\lambda_1= \pm1$,  Theorem \ref{thm2.2} implies that $V$ has only two irreducible subrepresntations $V_1$, $V_2$ and
$\dim(V_1) = (n+1)/2, \dim(V_2) = (n-1)/2$. We have $\Lambda_{n,+}(V_1)$ is an irreducible subrepresentation of $V$ and $\dim(\Lambda_{n,+}(V_1)) = (n+1)/2$. Then $\Lambda_{n,+}(V_1) = V_1$.
This shows $\Lambda_{n,+}|_{V_1}$ is an intertwiner for $V_1$. Similarly $\Lambda_{n,+}|_{V_2}$ is an intertwiner for $V_2$.
\end{proof}

\begin{thm}\label{thm2.4}
In the diagram \ref{EQ2.6}, suppose
$A=\begin{pmatrix}
a & b\\
c & d
\end{pmatrix} \in SL(2,\mathbb{Z})$ and $F_{A} = F_{A,-}$. Let $[\gamma]\in \mathcal{X}_{SL(2,\mathbb{C})}(T^2)$  with
$\gamma(\alpha)$=
$\begin{pmatrix}
  \lambda_{1} & 0\\
  0 & \lambda_{1}^{-1}
\end{pmatrix}$ and
$\gamma(\beta)$=
$\begin{pmatrix}
  \lambda_{2} & 0\\
  0 & \lambda_{2}^{-1}
\end{pmatrix}$ where $\lambda_1=\lambda_1^{-a}\lambda_2^{-b}, \lambda_2=\lambda_1^{-c}\lambda_2^{-d}$, and let $u,v$ be two complex numbers such that $u^{n}=-\lambda_{1}^{-1}$, $v^{n}=-\lambda_{2}^{-1}$. We have the following conclusions:

(a) $[\gamma]$ is invariant under $A$;

(b) the classical shadow of $\rho_{u,v}G$ is $[\gamma]$;

(c) $\rho_{u,v}F_A\simeq \rho_{u,v}$.

(d) From (c), we know there exists an  interwiner $\Lambda_{n,-}$ such that $\rho_{u,v} F_A(Z) = \Lambda_{n,-} \rho_{u,v}(Z) \Lambda^{-1}_{n,-}$ for
all $Z\in \mathbb{C}[X^{\pm 1},Y^{\pm 1}]_{q^{1/2}}$. Then this intertwiner induces an intertwiner between two irreducible representations of the skein algebra.

\end{thm}

\begin{proof}
The proof is the same as in Theorem \ref{thm2.3}.
\end{proof}

Note that a rescaling of  the intertwiner $\Lambda_{n,+}$  in Theorem \ref{thm2.3} such that $|\det(\Lambda_{n,+})| = 1$ makes
$|\text{Trace} \Lambda_{n,+}|$ independent of the choice of $u$ and $v$. The same thing holds for the intertwiner in Theorem \ref{thm2.4}.

For the following discussion, we always require $F_A$ to be $F_{A,+}$ unless specified otherwise (parallel results hold for $F_{A,-}$).
From the above discussion, we know there exists an intertwiner $\Lambda_n\in \rm{End}(V)$ such that the diagram  \ref{EQ2.6} commutes, where
$G_{\Lambda}(B) = \Lambda_n B \Lambda_n^{-1},\forall B\in \rm{End}(V)$. Next we are going to find an intertwiner  $\overline{\Lambda}_n$ under the assumption in Theorem \ref{thm2.3}.

\subsection{Calculation for intertwiners}
Under the assumption of Theorem \ref{thm2.3}, we have $\rho_{u,v}F_A\simeq \rho_{u,v}$. For any $a\in V$ and $Z\in \mathbb{C}[X^{\pm 1},Y^{\pm 1}]_{q^{1/2}}$,
we use $Z\cdot a$ and  $Z\star a$ to denote $\rho_{u,v}(Z)(a)$ and $\rho_{u,v}F_A(Z)(a)$ respectively. Then we are trying to find $\overline{\Lambda}_n\in \rm{End}(V)$
such that $\overline{\Lambda}_n(X\cdot a) = X\star(\overline{\Lambda}_n (a))$ and $\overline{\Lambda}_n(Y\cdot a) = Y\star(\overline{\Lambda}_n (a))$ for all $a\in V$.

\begin{rem}\label{rem4.1}
Assume $\text{gcd}(b,n)= m$ and $n=n^{'}m$. There exist two integers $r,s$ such that $br+sn=m$. Then we have
\begin{equation}
\begin{split}
&( v^{n^{'}b}u^{n^{'}(a-1)}q^{ab(n^{'})^2/2} )^m = v^{mn^{'}b}u^{mn^{'}(a-1)}q^{abm(n^{'})^2/2} \\
= &v^{nb}u^{n(a-1)}q^{abnn^{'}/2} = (-\lambda_2)^{b}(-\lambda_1)^{a-1}(-1)^{abn^{'}}\\
=&(-1)^{ab+b+a-1}\lambda_2^{b}\lambda_1^{a-1}=1,
\end{split}
\end{equation}
and $q^{an^{'}}$ is a primitive $m$-root of unity. 
Then there exists a unique  integer $0\leq k_0\leq m-1$ such that $( v^{n^{'}b}u^{n^{'}(a-1)}q^{ab(n^{'})^2/2} )q^{an^{'}k_0} = 1$ and
 $( v^{n^{'}b}u^{n^{'}(a-1)}q^{ab(n^{'})^2/2} )q^{an^{'}k}\neq 1$ for $k\neq k_0, 0\leq k\leq m-1$.
We set $r_{k_0} = 1$ and $r_{k} = 0$  for $k\neq k_0, 0\leq k\leq m-1$, and define $r_{k+tb} = r_{k} v^{tb}u^{t(a-1)} q^{a(tk+bt^2/2)}, \forall 0\leq k\leq m-1, t\in \mathbb{Z}$,
where we consider all indices modulo $n$. Since gcd$(b,n) = m$ and $n= mn^{'}$, we can reach all the indices. It is an easy check that $r_{k_1 +t_1 b} = r_{k_2 +t_2 b}$ if
  $k_1 +t_1 b \equiv k_2 +t_2 b \;(\mod\text{ }n)$. Then $r_k$ is well-defined for each $0\leq k\leq n-1$.

It is easy to check that we have $r_{k+tb} = r_kv^{tb}u^{t(a-1)}q^{atk+abt^2/2}$ for all $k,t\in \mathbb{Z}$. 
Actually we have
  $$r_{k_{0}+tb} = v^{tb}u^{t(a-1)} q^{a(tk_0+bt^2/2)}, \forall 0\leq t\leq n^{'}-1,$$ and all other $r_k$ are  0.
  We have $(v^b u^{a-1})^{n} = (v^n)^b (u^n)^{a-1} = (-\lambda_2)^{b} (-\lambda_1)^{a-1} = (-1)^{a+b-1}{\lambda_1}^{a-1} {\lambda_2}^{b}\\= (-1)^{a+b-1}$.
  Then we get $|r_k| =$ 0 or 1 for all $k\in \mathbb{Z}$.
  From $br+sn=m$, we get $tm = tbr + tsn$ for all $t\in \mathbb{Z}$. Then  we have
  $$r_{k_{0}+tm} = r_{k_{0}+trb} = v^{trb}u^{tr(a-1)} q^{a(trk_0+bt^2r^2/2)}, \forall 0\leq t\leq n^{'}-1,$$ and all other $r_k$ are  0.
\end{rem}

 The following Lemma offers an explicit formula for the intertwiner constructed in  Theorem \ref{thm2.3} (d).

\begin{lmm}\label{lmm4.9}
Under the assumption of Theorem \ref{thm2.3}, suppose $\overline{\Lambda}_n \in \rm{End}(V)$ and $$\overline{\Lambda}_n(e_t) = \sum_{0\leq k\leq n-1}(\overline{\Lambda}_n)_{k,t}e_k$$
for all $0\leq t\leq n-1$, where
$$(\overline{\Lambda}_n)_{k,t} = r_{k-td} (v^{(d-1)}u^{c})^t q^{c(tk-dt^2/2)}.$$
Then $\overline{\Lambda}_n$  satisfies the conditions  in Theorem \ref{thm2.3} (d).
\end{lmm}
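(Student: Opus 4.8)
The plan is to verify by hand that the proposed matrix $\overline{\Lambda}_n$ intertwines the two irreducible representations $\rho_{u,v}$ and $\rho_{u,v}F_A$ of $\mathbb{C}[X^{\pm1},Y^{\pm1}]_{q^{1/2}}$, and then to promote it to the required skein-algebra intertwiner using Schur's lemma and Theorem \ref{thm2.3}(d). First I would record the four relevant actions on the basis $e_0,\dots,e_{n-1}$. On one side $X\cdot e_t = uq^t e_t$ and $Y\cdot e_t = v e_{t+1}$. On the other side, since $X=\theta_{(1,0)}$, $Y=\theta_{(0,1)}$ and $F_{A,+}(\theta_{(i,j)})=\theta_{(i,j)A}$, we get $F_{A,+}(X)=\theta_{(a,b)}$ and $F_{A,+}(Y)=\theta_{(c,d)}$; applying $\rho_{u,v}$ then gives $X\star e_i = u^a v^b q^{ai+ab/2} e_{i+b}$ and $Y\star e_i = u^c v^d q^{ci+cd/2} e_{i+d}$. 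Because $X^{\pm1},Y^{\pm1}$ generate the algebra, it suffices to check $\overline{\Lambda}_n(X\cdot e_t)=X\star\overline{\Lambda}_n(e_t)$ and $\overline{\Lambda}_n(Y\cdot e_t)=Y\star\overline{\Lambda}_n(e_t)$.

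Comparing coefficients of $e_k$ turns these two requirements into the scalar recursions
\begin{equation*}
(\overline{\Lambda}_n)_{k,t}=u^{a-1}v^{b}q^{ak-ab/2-t}(\overline{\Lambda}_n)_{k-b,t},\qquad (\overline{\Lambda}_n)_{k,t+1}=u^{c}v^{d-1}q^{ck-cd/2}(\overline{\Lambda}_n)_{k-d,t},
\end{equation*}
required for all $k,t$ (indices mod $n$). I would then substitute the ansatz $(\overline{\Lambda}_n)_{k,t}=r_{k-td}(v^{d-1}u^{c})^{t}q^{c(tk-dt^{2}/2)}$. The second (the $Y$-)recursion becomes a formal identity: both sides carry the same factor $r_{k-(t+1)d}$, and a short bookkeeping of the $q$-, $u$-, $v$-exponents using $(t+1)^2=t^2+2t+1$ shows they agree, with no input beyond the shape of the ansatz. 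For the first (the $X$-)recursion, cancelling the common factor $(v^{d-1}u^{c})^{t}q^{c(tk-dt^2/2)}$ and setting $\ell=k-td$ reduces it to
\begin{equation*}
r_\ell = u^{a-1}v^{b}\,q^{a\ell-ab/2}\,q^{\,t(ad-bc-1)}\,r_{\ell-b}.
\end{equation*}
Here $\det A = ad-bc=1$ is essential: it kills the $t$-dependent factor $q^{t(ad-bc-1)}$, leaving exactly the $t=1$ instance of the relation $r_{k+tb}=r_k v^{tb}u^{t(a-1)}q^{a(tk+bt^2/2)}$ that defines $\{r_k\}$ in Remark \ref{rem4.1}. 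Thus both recursions hold.

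The one remaining point is consistency at the wrap-around $t\equiv 0\pmod n$: one must check that $\overline{\Lambda}_n(e_0)$, read off from $t=0$, equals the $t=n$ value forced by the $Y$-recursion. This amounts to $(v^{d-1}u^{c})^{n}q^{-cdn^{2}/2}=1$; using $(v^n)^{d-1}(u^n)^{c}=(-\lambda_2)^{d-1}(-\lambda_1)^{c}=(-1)^{c+d-1}\lambda_1^{c}\lambda_2^{d-1}=(-1)^{c+d-1}$ (the last equality by $A$-invariance, $\lambda_2=\lambda_1^{c}\lambda_2^{d}$) together with $(q^{1/2})^{n^{2}}=-1$ for odd $n$, the condition collapses to $(-1)^{c+d-1+cd}=1$, which follows from the parities forced by $ad-bc\equiv1\pmod 2$ (one checks that $c+d+cd$ is odd in every admissible parity case for $A\in SL(2,\mathbb Z)$). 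Finally $\overline{\Lambda}_n\neq0$ since $r_{k_0}=1$ in Remark \ref{rem4.1}, and both $\rho_{u,v}$ and $\rho_{u,v}F_A$ are irreducible representations of $\mathbb{C}[X^{\pm1},Y^{\pm1}]_{q^{1/2}}$ ($F_A$ being an automorphism), so Schur's lemma forces $\overline{\Lambda}_n$ to be invertible; hence it realizes the isomorphism of Theorem \ref{thm2.3}(c), and Theorem \ref{thm2.3}(d) upgrades it to the intertwiner of the associated skein-algebra representation. I expect the main obstacle to be purely organizational: carefully tracking the half-integer powers of $q$ (through the fixed square root $q^{1/2}$) across the two reindexings, and making the $\det A=1$ cancellation and the wrap-around parity check explicit rather than hand-waved.
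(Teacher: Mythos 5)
Your proposal is correct and follows the same route as the paper, whose proof is just the assertion ``from direct calculations'' that $\overline{\Lambda}_n$ intertwines the $\cdot$ and $\star$ actions of $X$ and $Y$; you have simply carried out those calculations, correctly reducing the $X$-condition to the defining relation of $\{r_k\}$ in Remark \ref{rem4.1} via $\det A=1$ and verifying the mod-$n$ consistency. The appeal to Schur's lemma on the quantum torus (where $\rho_{u,v}$ is irreducible unconditionally) and to Theorem \ref{thm2.3}(d) for the descent to the skein algebra matches the paper's intent.
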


\begin{proof}
From direct calculations, we can get $\overline{\Lambda}_n(X\cdot e_t) = X\star(\overline{\Lambda}_n (e_t))$ and $\overline{\Lambda}_n(Y\cdot e_t) = Y\star(\overline{\Lambda}_n (e_t))$ for all $0\leq t\leq n-1$.
\end{proof}

We have $(v^{d-1} u^{c})^{n} = (v^n)^{d-1} (u^n)^{c} = (-\lambda_2)^{d-1} (-\lambda_1)^{c} = (-1)^{c+d-1}{\lambda_1}^{c} {\lambda_2}^{d-1} = (-1)^{c+d-1}$. Then we can get $|(\overline{\Lambda}_n)_{k,t}| = 0 \text{ or } 1$.
We have $(\overline{\Lambda}_n)_{k,t} = 0$ if and only if  $r_{k-td} = 0$. Then it is easy to show that $(\overline{\Lambda}_n)_{ld + km + k_0, l+tm}, 0\leq l\leq m-1, 0\leq k,t\leq n^{'} - 1$,
are the only nonzero entries.

For each $0\leq l\leq m-1$, we define an $n^{'}\times n^{'}$ matrix $B^{l}$ such that $(B^{l})_{k,t} = (\overline{\Lambda}_n)_{ld + km + k_0, l+tm}$ for all $0\leq k,t\leq n^{'} - 1$.
Then by Laplace expansion, we know $|\det(\overline{\Lambda}_n)| = \prod_{0\leq l \leq m} |\det(B^{l})|$.

From pure calculations, we can get $|\det(B^{l})| = (n^{'})^{n^{'}/2}$.
%
Then we have $$|\det(\overline{\Lambda}_n)| = \prod_{0\leq l \leq m} |\det(B^{l})| = ((n^{'})^{n^{'}/2})^m = (n^{'})^{mn^{'}/2} = (n^{'})^{n/2},$$ furthermore
$|\det((n^{'})^{-1/2}\overline{\Lambda}_n)| = 1$.

\begin{rem}
If $\widetilde{\Lambda}_n$ is the intertwiner in Theorem \ref{thm2.4}, and we still suppose $\text{gcd}(b,n) = m, br + sn =m, n=n^{'}m$.  We have
$$r_{k_0 - tm} = (v^{-b}u^{-a-1})^{tr}q^{-trak_0 + abt^2r^2/2},\forall t\in\mathbb{Z}; \text{ }r_k =0,\text{ otherwise}$$
 $$(\widetilde{\Lambda}_n)_{k,t} = r_{k+td}(v^{-d-1}u^{-c})^t q^{-tck - cdt^2/2},\forall 0\leq k,t\leq n-1,$$
 where $0\leq k_0 \leq m-1$ such that
 $$(v^{-b}u^{-a-1})^{n^{'}}q^{ab(n^{'})^2/2}q^{-n^{'}ak_0} = 1.$$
 Also we can get $|\det(\widetilde{\Lambda}_n)| = (n^{'})^{n/2}$.
\end{rem}

\subsection{On the trace of intertwiners}

Bonahon-Wong-Yang only formulated the conjecture when the mapping tori are hyperbolic. So they
 considered surfaces with negative Euler characteristic because the mapping tori for the closed torus can never be hyperbolic. 
 Since the simplicial volume of mapping tori for the closed torus is zero, see page 380 \cite{BM}, we expect the corresponding  limit to be zero. 
In Theorem \ref{thm4.1}, we can show the limit superior is zero for any diffeomorphism. But the limits are not zero for some cases, see Example \ref{exa4.1}.
Some diffeomorphisms even do not have invariant characters that live in the Azumaya  locus, but the  intertwiners in Theorems \ref{thm2.3} and \ref{thm2.4} are very close to  intertwiners  constructed in
\cite{BW5}.


When we consider the intertwiners in Theorems \ref{thm2.3} and \ref{thm2.4}, we fix the mapping class $A$ and the $A$-invariant character $[\gamma]$.
In this subsection we will use $(l,s)$ to denote $\text{gcd}(l,s)$ for any two integers $l,s$.

\begin{thm}\label{thm2.5}
If we require $|\det(\Lambda_n)| = 1$ for the intertwiner in Theorem \ref{thm2.3}, then $|{\rm{Trace}}\Lambda_n|\leq  n^{3/2}$.
\end{thm}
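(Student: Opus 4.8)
The plan is to bound $|\mathrm{Trace}\,\overline{\Lambda}_n|$ directly and then translate the bound to the normalized intertwiner $\Lambda_n = (n')^{-1/2}\overline{\Lambda}_n$. Recall from Lemma \ref{lmm4.9} that $(\overline{\Lambda}_n)_{k,t} = r_{k-td}(v^{d-1}u^c)^t q^{c(tk-dt^2/2)}$, and from Remark \ref{rem4.1} and the discussion after Lemma \ref{lmm4.9} that $|(\overline{\Lambda}_n)_{k,t}| \in \{0,1\}$, with $(\overline{\Lambda}_n)_{k,t}\neq 0$ iff $r_{k-td}\neq 0$. Writing $\mathrm{Trace}\,\overline{\Lambda}_n = \sum_{t=0}^{n-1}(\overline{\Lambda}_n)_{t,t}$, each summand has modulus $0$ or $1$, so trivially $|\mathrm{Trace}\,\overline{\Lambda}_n|\le n$. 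First I would check whether this crude bound already gives what we want: since $|\mathrm{Trace}\,\Lambda_n| = (n')^{-1/2}|\mathrm{Trace}\,\overline{\Lambda}_n| \le (n')^{-1/2}\,n \le n^{3/2}$ (as $n'\ge 1$), the statement follows at once. So the real content is just assembling the pieces already proved: the explicit form of $\overline{\Lambda}_n$, the fact that its entries are bounded by $1$ in modulus, and the normalization constant $(n')^{-1/2}$ computed from $|\det(\overline{\Lambda}_n)| = (n')^{n/2}$.

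More carefully, I would argue as follows. By the previous subsection, if $\gcd(b,n)=m$ and $n=n'm$, then the intertwiner of Theorem \ref{thm2.3} may be taken to be $\Lambda_n = (n')^{-1/2}\overline{\Lambda}_n$ with $\overline{\Lambda}_n$ as in Lemma \ref{lmm4.9}, and this satisfies $|\det(\Lambda_n)|=1$ (any other choice with $|\det|=1$ differs by a scalar of modulus $1$, so $|\mathrm{Trace}\,\Lambda_n|$ is unchanged). Then
\[
|\mathrm{Trace}\,\Lambda_n| = (n')^{-1/2}\Big|\sum_{t=0}^{n-1}(\overline{\Lambda}_n)_{t,t}\Big| \le (n')^{-1/2}\sum_{t=0}^{n-1}|(\overline{\Lambda}_n)_{t,t}| \le (n')^{-1/2}\cdot n \le n^{3/2},
\]
using $|(\overline{\Lambda}_n)_{t,t}|\le 1$ and $n'\ge 1$.

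I do not expect a genuine obstacle here; the only point requiring a little care is making sure the intertwiner is pinned down up to a unit scalar, so that "the intertwiner in Theorem \ref{thm2.3} with $|\det|=1$" is unambiguous as far as $|\mathrm{Trace}|$ is concerned — this is exactly the remark made just before the subsection. One could also note that the bound is far from tight (the diagonal of $\overline{\Lambda}_n$ has at most $n'$ nonzero entries for many $A$, giving $|\mathrm{Trace}\,\Lambda_n|\le (n')^{1/2}\le n^{1/2}$ in those cases), but since the theorem only asks for $n^{3/2}$, the crude count suffices and keeps the proof short. The analogous statement for Theorem \ref{thm2.4} follows identically from the corresponding remark, using $|\det(\widetilde{\Lambda}_n)|=(n')^{n/2}$ and $|(\widetilde{\Lambda}_n)_{k,t}|\le 1$.
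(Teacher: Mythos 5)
Your proof is correct, and it reaches the conclusion by a slightly different final estimate than the paper. Both arguments rest on the same ingredients: the well-definedness of $|{\rm Trace}\,\Lambda_n|$ up to unit scalars and conjugation, the explicit matrix $\overline{\Lambda}_n$ from Lemma \ref{lmm4.9} with $|(\overline{\Lambda}_n)_{k,t}|\in\{0,1\}$, and the normalization $\Lambda_n=(n')^{-1/2}\overline{\Lambda}_n$ coming from $|\det(\overline{\Lambda}_n)|=(n')^{n/2}$. Where you differ is the last step: you apply the triangle inequality directly to the $n$ diagonal entries, each of modulus at most $1$, getting $|{\rm Trace}\,\overline{\Lambda}_n|\le n$ and hence $|{\rm Trace}\,\Lambda_n|\le (n')^{-1/2}n\le n$. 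The paper instead bounds the spectral radius of $\overline{\Lambda}_n$ by its maximal absolute row sum $n'$ (each row has exactly $n'$ nonzero entries of modulus $1$), sums $n$ eigenvalues to get $|{\rm Trace}\,\overline{\Lambda}_n|\le nn'$, and concludes $|{\rm Trace}\,\Lambda_n|\le (n')^{1/2}n\le n^{3/2}$. Your route is more elementary (no eigenvalue considerations) and yields the strictly stronger bound $|{\rm Trace}\,\Lambda_n|\le n$, which of course implies the stated $n^{3/2}$; the paper's eigenvalue bound is what makes the exponent $3/2$ appear naturally, but it is not needed for the theorem as stated.
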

\begin{proof}
Since any two intertwiners in Theorem \ref{thm2.3} are different by a scalar multiplication or by conjugation and we require $|\det(\Lambda_n)| = 1$, the absolute value $|\rm{Trace}\Lambda_n|$ is independent of the choice of intertwiners. Let $\Lambda_n = (n^{'})^{-1/2}\overline{\Lambda}_n$, then $|\det(\Lambda_n)| = 1$.
Since $|(\overline{\Lambda}_n)_{k,t}| =0 \text{ or } 1$ for all $0\leq k,t\leq n-1$ and each row has exactly $n^{'}$ nonzero entries, we have the absolute value of every eigenvalue
of $\overline{\Lambda}_n$ is not more than $n^{'}$. Then $$|\text{Trace}(\Lambda_n)|=|\text{Trace}((n^{'})^{-1/2}\overline{\Lambda}_n)| \leq (n^{'})^{-1/2}(nn^{'}) = (n^{'})^{1/2}n \leq  n^{3/2}.$$
\end{proof}

\begin{thm}
If we require $|\det(\Lambda_n)| = 1$ for the intertwiner in Theorem \ref{thm2.4}, then $|{\rm{Trace}}\Lambda_n|\leq  n^{3/2}$.
\end{thm}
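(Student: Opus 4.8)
The plan is to mirror the proof of Theorem \ref{thm2.5} almost verbatim, using the explicit formula for the intertwiner $\widetilde{\Lambda}_n$ recorded in the remark following Lemma \ref{lmm4.9}. First I would note, exactly as in the proof of Theorem \ref{thm2.5}, that any two intertwiners in Theorem \ref{thm2.4} differ by a scalar multiple (or by conjugation, which does not change the trace), so once we impose $|\det(\Lambda_n)| = 1$ the quantity $|\mathrm{Trace}\,\Lambda_n|$ is independent of which intertwiner we pick. Hence it suffices to estimate the trace of the normalized version of the specific $\widetilde{\Lambda}_n$ from that remark.

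Next I would invoke the two facts about $\widetilde{\Lambda}_n$ that are already established in the excerpt: every entry $(\widetilde{\Lambda}_n)_{k,t} = r_{k+td}(v^{-d-1}u^{-c})^t q^{-tck - cdt^2/2}$ has absolute value $0$ or $1$ (this uses $(v^{-d-1}u^{-c})^n = \pm 1$, computed the same way as for $\overline{\Lambda}_n$ using the $A$-invariance relations), and each row contains exactly $n'$ nonzero entries (because the nonzero $r_k$ are precisely those with $k \equiv k_0 \pmod m$, of which there are $n' = n/m$ in each residue class mod $n$). From the row-sum bound, the absolute value of every eigenvalue of $\widetilde{\Lambda}_n$ is at most $n'$, so $|\mathrm{Trace}\,\widetilde{\Lambda}_n| \leq n\,n'$. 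Since $|\det(\widetilde{\Lambda}_n)| = (n')^{n/2}$ as stated in the remark, the normalized intertwiner is $\Lambda_n = (n')^{-1/2}\widetilde{\Lambda}_n$, and therefore
\begin{equation*}
|\mathrm{Trace}\,\Lambda_n| = (n')^{-1/2}\,|\mathrm{Trace}\,\widetilde{\Lambda}_n| \leq (n')^{-1/2}\,n\,n' = (n')^{1/2}\,n \leq n^{3/2},
\end{equation*}
where the last inequality is just $n' \leq n$. This completes the argument.

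There is essentially no obstacle here; the only thing that needs a line of checking is the claim that each row of $\widetilde{\Lambda}_n$ has exactly $n'$ nonzero entries, which follows from the description of the support of $r_k$ in the remark (the condition $r_{k+td} \neq 0$ is a single congruence condition on $k+td$ modulo $m$, hence selects $n/m = n'$ values of $k$ in $\{0,\dots,n-1\}$ for each fixed $t$). Everything else — the scalar-multiple reduction, the Gershgorin/row-sum eigenvalue bound, and the determinant normalization — is identical to the proof of Theorem \ref{thm2.5}, so I would simply write ``The proof is the same as that of Theorem \ref{thm2.5}, now using $\widetilde{\Lambda}_n$ in place of $\overline{\Lambda}_n$'' together with the displayed chain of inequalities above.
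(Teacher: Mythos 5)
Your proposal is correct and follows exactly the route the paper intends: the paper's own proof of this statement is simply ``similar to Theorem \ref{thm2.5},'' and you have carried out that adaptation faithfully, replacing $\overline{\Lambda}_n$ by $\widetilde{\Lambda}_n$, using the remark's facts that the entries have modulus $0$ or $1$ and that $|\det(\widetilde{\Lambda}_n)| = (n')^{n/2}$, and applying the same row-sum eigenvalue bound. Nothing further is needed.
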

\begin{proof}
It is similar with the proof for Theorem \ref{thm2.5}.
\end{proof}

\begin{lmm}
Let $k$ be any  integer, then we have
\begin{equation*}
|\sum_{0\leq t\leq n-1}(-q^{\frac{1}{2}})^{kt^2}| = \sqrt{(k, n)n}.
\end{equation*}
 Recall that $q^{1/2}$ is a primitive $n$-root of $-1$.
\end{lmm}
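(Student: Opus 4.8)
The plan is to recognise the left--hand side as a quadratic Gauss sum and to evaluate its modulus via the elementary identity $|S|^{2}=S\overline{S}$, which bypasses having to determine the argument of the sum at all.

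The first step is to observe that $\zeta:=-q^{1/2}$ is a \emph{primitive} $n$-th root of unity. Since $n$ is odd, $(-q^{1/2})^{n}=(-1)^{n}(q^{1/2})^{n}=(-1)(-1)=1$, so $\zeta^{n}=1$; and if $\zeta^{j}=1$ for a proper divisor $j$ of $n$ with $1\le j<n$, then $j$ is odd (being a divisor of the odd number $n$), so $1=\zeta^{j}=(-1)^{j}(q^{1/2})^{j}=-(q^{1/2})^{j}$ forces $(q^{1/2})^{j}=-1$, contradicting that $q^{1/2}$ is a primitive $n$-th root of $-1$. Hence $\zeta$ has exact order $n$, and writing $S=\sum_{t=0}^{n-1}\zeta^{kt^{2}}$ it suffices to show $|S|=\sqrt{(k,n)n}$.

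The second step is the main computation. I would expand
\[
|S|^{2}=S\overline{S}=\sum_{s=0}^{n-1}\sum_{t=0}^{n-1}\zeta^{k(s^{2}-t^{2})},
\]
substitute $s=t+r$ (for each fixed $t$ the variable $r$ still ranges over $\mathbb{Z}/n$), use $s^{2}-t^{2}=2tr+r^{2}$, and separate variables:
\[
|S|^{2}=\sum_{r=0}^{n-1}\zeta^{kr^{2}}\sum_{t=0}^{n-1}\bigl(\zeta^{2kr}\bigr)^{t}.
\]
The inner geometric sum equals $n$ when $\zeta^{2kr}=1$ and $0$ otherwise, because $(\zeta^{2kr})^{n}=1$. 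Setting $d=(k,n)$, $n=dn_{1}$, $k=dk_{1}$ with $(k_{1},n_{1})=1$, and using that $n$ is odd, one has $\zeta^{2kr}=1$ exactly when $n\mid kr$, i.e.\ when $n_{1}\mid r$; so the surviving indices are $r=jn_{1}$, $0\le j\le d-1$. For such $r$ one computes $kr^{2}=(k_{1}n_{1}j^{2})\,n$, so $\zeta^{kr^{2}}=(\zeta^{n})^{k_{1}n_{1}j^{2}}=1$, and each of these $d$ terms contributes exactly $n$. Therefore $|S|^{2}=dn=(k,n)n$, and the lemma follows on taking a square root (the case $n\mid k$, i.e.\ $d=n$, is included, consistent with $S=n$ directly).

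The argument is essentially routine; the only places that need care are the primitivity of $-q^{1/2}$, where the oddness of $n$ is used, and the divisibility bookkeeping that isolates the surviving values of $r$ and verifies $\zeta^{kr^{2}}=1$ on them. I do not anticipate a genuine obstacle here — this is a standard Gauss-sum modulus computation, which is presumably exactly why only the modulus (and not the finer phase evaluation) is needed.
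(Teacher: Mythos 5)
Your proof is correct, and it is the standard Gauss-sum modulus computation via $|S|^2=S\overline{S}$ with the substitution $s=t+r$ — which is precisely the ``trick'' the paper invokes: its own proof merely cites Murakami--Ohtsuki--Okada for the case $k=2$ and asserts the same argument generalizes. Your write-up simply supplies the details (primitivity of $-q^{1/2}$ as an $n$-th root of unity, and the divisibility bookkeeping isolating $r\equiv 0\pmod{n_1}$) that the paper leaves to the reference.
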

\begin{proof}
In \cite{HTM}, they proved this result for $k=2$. Using the same trick, we can prove this generalized lemma.

\end{proof}

In the following of this section, we always assume  $q^{1/2} = e^{\pi i/n}$ unless especially specified.
Next we are going to calculate $\text{Trace}\Lambda_n$, where $\Lambda_n$ is the intertwiner in
Theorem \ref{thm2.3} or \ref{thm2.4} with $|\det\Lambda_n| = 1$.
First we give detailed  discussion on the invariant character. Recall that for any $A=
\begin{pmatrix}
  a & b\\
  c & d
\end{pmatrix} \in SL(2,\mathbb{Z})$ and a character $[\gamma]\in \mathcal{X}_{SL(2,\mathbb{C})}(T^2)$ with
$\gamma(\alpha)$=
$\begin{pmatrix}
  \lambda_{1} & 0\\
  0 & \lambda_{1}^{-1}
\end{pmatrix}$ and
$\gamma(\beta)$=
$\begin{pmatrix}
  \lambda_{2} & 0\\
  0 & \lambda_{2}^{-1}
\end{pmatrix}$, we have $[\gamma]$ is  $A$-invariant if and only if $1=\lambda_1^{a-1}\lambda_2^{b}, 1=\lambda_1^{c}\lambda_2^{d-1}$ or
$1=\lambda_1^{a+1}\lambda_2^{b}, 1=\lambda_1^{c}\lambda_2^{d+1}$.

\begin{rem}\label{rem3.2}
We will provide a detailed discussion only for the case when  $1=\lambda_1^{a-1}\lambda_2^{b}, 1=\lambda_1^{c}\lambda_2^{d-1}$.
Suppose $\lambda_1 = \alpha_1 e^{i\theta_1}, \lambda_2 = \alpha_2 e^{i\theta_2}$, then we can get equations:
\begin{equation*}
\begin{cases}
1=\alpha_1^{a-1}\alpha_2^{b}\\
1=\alpha_1^{c}\alpha_2^{d-1}
\end{cases}
\end{equation*}
\begin{equation}\label{eqq3.10}
\begin{cases}
(a-1)\theta_1 + b\theta_2 = 2k_1 \pi\\
c\theta_1 + (d-1)\theta_2 = 2k_2 \pi
\end{cases}.
\end{equation}
Since $u^n = -\lambda_1, v^n = -\lambda_2$, we can suppose $u = -\alpha_1^{\frac{1}{n}}e^{\frac{i\theta_1}{n}}q^{r_1},
v = -\alpha_2^{\frac{1}{n}}e^{\frac{i\theta_2}{n}}q^{r_2}$
where both $r_1$ and $ r_2$ are integers. Then we have
\begin{equation}\label{eqq3.11}
\begin{split}
u^{a-1}v^b = (-1)^{a+b-1}q^{r_1(a-1) + r_2b}e^{\frac{i}{n}((a-1)\theta_1 + b\theta_2)} = (-1)^{a+b-1}q^{r_1(a-1) + r_2b + k_1},\\
u^{c}v^{d-1} = (-1)^{c+d-1}q^{r_1c + r_2(d-1)}e^{\frac{i}{n}(c\theta_1 + (d-1)\theta_2)} = (-1)^{c+d-1}q^{r_1c + r_2(d-1) + k_2}.
\end{split}
\end{equation}
Define $s_1 = r_1(a-1) + r_2b + k_1, s_2 = r_1c + r_2(d-1) + k_2$. Then, we have $u^{a-1}v^b = (-1)^{a+b-1}q^{s_1}, u^{c}v^{d-1}= (-1)^{c+d-1}q^{s_2}$.

From $1=\lambda_1^{a-1}\lambda_2^{b}, 1=\lambda_1^{c}\lambda_2^{d-1}$,  we can get $1=\lambda_1^{(a-1)c}\lambda_2^{bc},
1=\lambda_1^{(a-1)c}\lambda_2^{(a-1)(d-1)}$. Thus we have
$$\lambda_2^{bc} = \lambda_2^{(a-1)(d-1)} = \lambda_2^{ad - (a+d) +1}\Rightarrow 1 = \lambda_2^{ad - bc - (a+d) +1} = \lambda_2^{2 - (a+d)}.$$
If $a+d\neq 2$,
 we have $\lambda_2$ is a root of unity with $\lambda_2^{2 - (a+d)}=1$. Similarly we can show $\lambda_1$ is also a root of unity,   with  $\lambda_1^{2 - (a+d)}=1$,
 under the assumption $a+d\neq 2$.



We look at the case when $(b,n) = 1$, and suppose $br + sn = 1$. Then we have
$$q^{br} = q, q^{\frac{br}{2}} = (-1)^sq^{\frac{1}{2}}, (-1)^s = (-1)^{br + 1}.$$
When $(b,n) = 1$, we can choose $k_0 = 1$ and set $r_{k_0} = r_0 = 1$. Then we have
 $$r_{tb} = v^{tb}u^{t(a-1)} q^{abt^2/2}, \forall t\in \mathbb{Z}.$$
 For any $k\in\mathbb{Z}$, we have $k = krb + ksn$. Then we have
 $$r_k = r_{krb} = v^{krb}u^{kr(a-1)} q^{abk^2 r^2/2}.$$

From the above discussion, we know we can choose $\Lambda_n$ to be $n^{-1/2}\overline{\Lambda}_n$. We have
\begin{align*}
(\overline{\Lambda}_n)_{t,t}& = r_{t-td}(v^{d-1}u^c)^t q^{ct^2 - cdt^2/2} = (-1)^{cdt}r_{t-td}q^{s_2t} q^{ct^2 - cdt^2/2}\\
&= (-1)^{cdt}(v^b u^{a-1})^{r(t-td)}q^{abr^2(t-td)^2/2}q^{s_2t} q^{ct^2 - cdt^2/2}\\
&= (-1)^{cdt}(-1)^{abr(t-td)}q^{s_1r(t-td)}q^{abr^2(t-td)^2/2}q^{s_2t} q^{ct^2 - cdt^2/2}\\
&= (-1)^{cdt}(-1)^{abr(t-td)}   (q^{s_1r(1-d)}q^{s_2})^t  ((q^{\frac{br}{2}})^{ar(1-d)^2}q^{c - \frac{1}{2}cd})^{t^2}\\
&= (-1)^{cdt}(-1)^{abr(t-td)}   (q^{s_1r(1-d)}q^{s_2})^t  (((-1)^s q^{\frac{1}{2}})^{ar(1-d)^2}q^{c - \frac{1}{2}cd})^{t^2}\\
&= (-1)^{cdt}(-1)^{abr(t-td)}(-1)^{tsar(1-d)^2}   (q^{s_1r(1-d)}q^{s_2})^t  ( q^{\frac{1}{2}ar} q^{-ard }
q^{\frac{1}{2}ard^2} q^{c - \frac{1}{2}cd})^{t^2}\\
&= (-1)^{cdt}(-1)^{abr(t-td)}(-1)^{tsar(1-d)^2}   (q^{s_1r(1-d)}q^{s_2})^t  ( q^{\frac{1}{2}ar} q^{-r} q^{-rbc  }
q^{\frac{1}{2}rd} q^{\frac{1}{2}rbcd} q^{c - \frac{1}{2}cd})^{t^2}\\
&= (-1)^{cdt}(-1)^{abr(t-td)}(-1)^{tsar(1-d)^2}   (q^{s_1r(1-d)}q^{s_2})^t  ( q^{\frac{1}{2}ar} q^{-r} q^{-c  }
q^{\frac{1}{2}rd} ((-1)^s q^{\frac{1}{2}})^{cd} q^{c - \frac{1}{2}cd})^{t^2}\\
&= (-1)^{cdt}(-1)^{abr(t-td)}(-1)^{tsar(1-d)^2} (-1)^{tscd}   (q^{s_1r(1-d)}q^{s_2})^t  ( q^{\frac{1}{2}(a+d-2)r} )^{t^2}\\
&= (-1)^{cdt}(-1)^{abr(t-td)}(-1)^{t(br+1)ar(1-d)} (-1)^{t(br+1)cd}   (q^{s_1r(1-d)}q^{s_2})^t  ( q^{\frac{1}{2}(a+d-2)r} )^{t^2}\\
&=(-1)^{tar} (-1)^{tard} (-1)^{trd(ad+1)}   (q^{s_1r(1-d)}q^{s_2})^t  ( q^{\frac{1}{2}(a+d-2)r} )^{t^2}\\
&=(-1)^{tar}  (-1)^{trd}   (q^{s_1r(1-d)}q^{s_2})^t  ( q^{\frac{1}{2}(a+d-2)r} )^{t^2}\\
&=((-1)^{(a+d-2)r} )^t (q^{s_1r(1-d)}q^{s_2})^t  ( q^{\frac{1}{2}(a+d-2)r} )^{t^2}.\\
\end{align*}
Since $q^{s_2} = q^{rs_2b}$, we have
\begin{equation}
(\overline{\Lambda}_n)_{t,t} = ((-1)^{(a+d-2)r} )^t q^{\frac{r}{2}((a+d-2)t^2 + 2(s_1(1+d) - s_2b)t)},
\end{equation}
and
\begin{equation}\label{eqq3.13}
\text{Trace}\Lambda_n = n^{-\frac{1}{2}}\sum_{0\leq t\leq n-1} ((-1)^{(a+d-2)r} )^t q^{\frac{r}{2}((a+d-2)t^2 + 2(s_1(1+d) - s_2b)t)}.
\end{equation}

\end{rem}

\begin{rem}
Here we state the parallel results for $1=\lambda_1^{a+1}\lambda_2^{b}, 1=\lambda_1^{c}\lambda_2^{d+1}$ and $u^n = -\lambda_1^{-1}, v^n = -\lambda_2^{-1}$.

Suppose $\lambda_1 = \alpha_1 e^{i\theta_1}, \lambda_2 = \alpha_2 e^{i\theta_2}$, then we can get equations:
\begin{equation*}
\begin{cases}
1=\alpha_1^{a+1}\alpha_2^{b}\\
1=\alpha_1^{c}\alpha_2^{d+1}
\end{cases}
\end{equation*}
\begin{equation*}
\begin{cases}
(a+1)\theta_1 + b\theta_2 = 2k_1 \pi\\
c\theta_1 + (d+1)\theta_2 = 2k_2 \pi
\end{cases}.
\end{equation*}
Since $u^n = -\lambda_1^{-1}, v^n = -\lambda_2^{-1}$, we can suppose $u = -\alpha_1^{-\frac{1}{n}}e^{-\frac{i\theta_1}{n}}q^{r_1},
v = -\alpha_2^{-\frac{1}{n}}e^{-\frac{i\theta_2}{n}}q^{r_2}$
where both $r_1$ and $ r_2$ are integers. Then we have
\begin{equation*}
\begin{split}
u^{a+1}v^b = (-1)^{a+b+1}q^{r_1(a+1) + r_2b - k_1},\\
u^{c}v^{d+1} = (-1)^{c+d+1}q^{r_1c + r_2(d+1) - k_2}.
\end{split}
\end{equation*}
Similarly we set $s_1 = r_1(a+1) + r_2b - k_1, s_2 = r_1c + r_2(d+1) - k_2$, then we have $u^{a+1}v^b = (-1)^{a+b+1}q^{s_1 }, u^{c}v^{d+1} = (-1)^{c+d+1}q^{s_2}$.
If $a+d \neq -2$, we have $\alpha_1 = \alpha_2 = 1$.

For the case when $(b,n) = 1$ and $br + sn = 1$, we have
$$(\widetilde{\Lambda}_n)_{t,t} = ((-1)^{(a+d+2)r} )^t q^{\frac{r}{2}((a+d+2)t^2 + 2(s_1(1+d) - s_2b)t)},$$
and
\begin{equation}\label{eqq3.14}
\text{Trace}\Lambda_n = n^{-\frac{1}{2}}\sum_{0\leq t\leq n-1} ((-1)^{(a+d+2)r} )^t q^{\frac{r}{2}((a+d+2)t^2 + 2(s_1(1+d) - s_2b)t)}.
\end{equation}

\end{rem}

\begin{exam}\label{exa4.1}
Let $A =$
$\begin{pmatrix}
  2 & 1\\
  -7 & -3
\end{pmatrix}$.
If we try to solve $1=\lambda_1^{a-1}\lambda_2^{b}, 1=\lambda_1^{c}\lambda_2^{d-1}$,   we can get equations:
\begin{equation}
\begin{cases}
 \theta_1 + \theta_2 = 2\pi\\
-7\theta_1 -4\theta_2 = 2\pi
\end{cases}.
\end{equation}
We have $\theta_1 = -\frac{10\pi}{3}, \theta_2 = \frac{16\pi}{3}$, thus $\lambda_1 = e^{\frac{2\pi i}{3}}, \lambda_2 = e^{\frac{4\pi i}{3}}$.
So we can set $u = - e^{-\frac{10\pi i}{3n}}, v = - e^{\frac{16\pi i}{3n}}$, then we have $u^{a-1} v^b = q, u^{c}v^{d-1} = -q$.
We have $s_1 = s_2 = 1$. Since $b = 1$, we get $r = 1$. Then from equation \ref{eqq3.13}, we have
\begin{equation*}
{\rm{Trace}}\Lambda_n = n^{-\frac{1}{2}}\sum_{0\leq t\leq n-1} (-1 )^t q^{\frac{1}{2}(-3t^2 + 10t)}.
\end{equation*}
Note that when $n$ is a multiple of 3, we have ${\rm{Trace}}\Lambda_n = 0$.

\end{exam}

\begin{exam}
Let $A $
be the same matrix as above. But this time we try to
solve $1=\lambda_1^{a+1}\lambda_2^{b}, 1=\lambda_1^{c}\lambda_2^{d+1}$,
then we get $\lambda_1^{2+a+d} = \lambda_2^{2+a+d}  = 1$. Since $2+a+d = 1$, so we have $\lambda_1 = \lambda_2 = 1$.
We can set $u = v = -1$, then $s_1 = s_2 = 0$.
From equation \ref{eqq3.13}, we have
\begin{equation*}
|{\rm{Trace}}\Lambda_n| = n^{-\frac{1}{2}}|\sum_{0\leq t\leq n-1} (-1 )^t q^{\frac{1}{2}t^2}| = 1.
\end{equation*}
\end{exam}

\begin{lmm}\label{lmm3.11}
Let $A =$
$\begin{pmatrix}
  a & b\\
  c & d
\end{pmatrix}$, where $(b,n) = 1$ and $|a+d| = 2$. Then we have the following statements:

(1)If $a+d = 2$ and $\Lambda_n$ is the intertwiner obtained in Theorem \ref{thm2.3} such that $|\det(\Lambda_n)| = 1$, we have $|{\rm{Trace}}\Lambda_n| = \sqrt{n}$.

(2)If $a+d = -2$ and $\Lambda_n$ is the intertwiner obtained in Theorem \ref{thm2.4} such that $|\det(\Lambda_n)| = 1$, we have $|{\rm{Trace}}\Lambda_n| = \sqrt{n}$.
\end{lmm}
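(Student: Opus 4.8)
The plan is to read $\mathrm{Trace}\,\Lambda_n$ off the explicit diagonal of the intertwiner and to use the hypothesis $|a+d|=2$ to collapse the quadratic exponential sum that appears there into a geometric sum, which is then evaluated directly.

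Start with statement (1): $a+d=2$. Since $(b,n)=1$ we have $m=\gcd(b,n)=1$ and $n'=n$, so the intertwiner of Theorem \ref{thm2.3} normalized to $|\det\Lambda_n|=1$ is precisely $\Lambda_n=n^{-1/2}\overline{\Lambda}_n$, with $\overline{\Lambda}_n$ as in Lemma \ref{lmm4.9}. Substituting $a+d=2$ into the trace formula \eqref{eqq3.13}: the coefficient $\tfrac r2(a+d-2)$ of $t^2$ vanishes and the factor $\big((-1)^{(a+d-2)r}\big)^{t}$ becomes $1$, leaving
\[
\mathrm{Trace}\,\Lambda_n=n^{-1/2}\sum_{t=0}^{n-1}\big(q^{rM}\big)^{t},\qquad M:=s_1(1+d)-s_2 b .
\]
Since $q$ is a primitive $n$-th root of unity, $\sum_{t=0}^{n-1}(q^{rM})^t$ equals $n$ if $q^{rM}=1$ and $0$ otherwise; hence $|\mathrm{Trace}\,\Lambda_n|$ is $\sqrt n$ or $0$, and the lemma reduces to proving $q^{rM}=1$, i.e. (as $\gcd(r,n)=1$) $n\mid M$.

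To get $n\mid M$ I would combine the entries of $\overline{\Lambda}_n$ from Lemma \ref{lmm4.9} with the determinant relation. When $a+d=2$ one has $d-1=-(a-1)$ and, from $\det A=1$, $bc=-(a-1)^2$; writing $br+sn=1$ (possible since $(b,n)=1$) one gets $c+r(a-1)^2=c-rbc=c(1-rb)=csn\equiv0\ (\mathrm{mod}\ n)$, which is exactly the cancellation that makes the $t$-linear part of the exponent of $(\overline{\Lambda}_n)_{t,t}$ equal to a fixed multiple of $r$ modulo $n$. One then evaluates $q^{rM}$ explicitly using $u^{a-1}v^{b}=(-1)^{a+b-1}q^{s_1}$, $u^{c}v^{d-1}=(-1)^{c+d-1}q^{s_2}$ and $bc=-(a-1)^2$ again — for instance from the identity $(u^{a-1}v^{b})^{1+d}(u^{c}v^{d-1})^{-b}=u^{2(a-1)}v^{2b}=(u^{a-1}v^{b})^{2}$, which shows $q^{M}=q^{2s_1}$ once one checks that the sign exponent $(a+b-1)(1+d)-(c+d-1)b=2(a+b-1)$ is even. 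The $q$-exponent then collapses to $0$ modulo $n$ and all intervening $(-1)^{(\cdots)}$ factors collapse to $+1$ by parity identities of the type $ab+a+b\equiv\gcd(a,b)\ (\mathrm{mod}\ 2)$ noted before Theorem \ref{thm2.1}; thus $q^{rM}=1$ and $|\mathrm{Trace}\,\Lambda_n|=\sqrt n$.

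Statement (2) is the mirror image: for $a+d=-2$ one uses the intertwiner of Theorem \ref{thm2.4} and the trace formula \eqref{eqq3.14}, whose $t^2$-coefficient $\tfrac r2(a+d+2)$ and accompanying sign factor again vanish; the same computation with $bc=-(a+1)^2$ in place of $bc=-(a-1)^2$ gives $|\mathrm{Trace}\,\Lambda_n|=\sqrt n$. The step I expect to be the main obstacle is the last one in case (1): checking $q^{rM}=1$. This is where both hypotheses are essential — $\gcd(b,n)=1$ to make $b$ invertible modulo $n$ (so that $br+sn=1$ and $r(a-1)^2=-rbc\equiv -c\ (\mathrm{mod}\ n)$), and $\det A=1$ together with $|a+d|=2$ to turn $(a-1)^2$ (resp. $(a+1)^2$) into $-bc$ — and the remaining care lies only in bookkeeping the orders ($q^{1/2}$ of order $2n$, $q$ of order $n$) so that the sign factors are trivial. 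The Gauss-sum degeneration, the geometric-sum evaluation, and case (2) are all routine.
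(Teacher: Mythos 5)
Your reduction is the same as the paper's: with $(b,n)=1$ one has $m=1$, $n'=n$, $\Lambda_n=n^{-1/2}\overline{\Lambda}_n$, the coefficient of $t^2$ and the sign factor vanish when $a+d=2$, and the trace becomes $n^{-1/2}$ times a geometric sum $\sum_{t}(q^{rM})^{t}$, so the whole lemma hinges on the linear exponent being $\equiv 0\pmod n$. That is exactly the step your argument does not close. Your identity $(u^{a-1}v^{b})^{1+d}(u^{c}v^{d-1})^{-b}=(u^{a-1}v^{b})^{2}$ is correct and yields $q^{M}=q^{2s_1}$ for $M=s_1(1+d)-s_2b$, but $q^{2s_1}$ is not $1$ in general: $s_1=r_1(a-1)+r_2b+k_1$ depends on the arbitrary integers $r_1,r_2$ fixing the $n$-th roots $u,v$, and under a change of that choice $M$ itself shifts by $2r_1(a-1)+2r_2b$, so $q^{rM}=1$ cannot hold for all admissible choices. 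The closing assertion that ``the $q$-exponent then collapses to $0$ modulo $n$'' is therefore unjustified, and false as stated.

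The underlying issue is that the linear coefficient in the displayed formula \eqref{eqq3.13} is misprinted: the derivation in Remark \ref{rem3.2} produces the linear factor $(q^{s_1r(1-d)}q^{s_2})^{t}$, so after substituting $q^{s_2}=q^{rs_2b}$ the coefficient is $s_1(1-d)+s_2b$, not $s_1(1+d)-s_2b$; the paper's own proof of the lemma works with the former. Applied to the correct coefficient, your multiplicative method closes immediately and is arguably cleaner than the paper's: $(u^{a-1}v^{b})^{1-d}(u^{c}v^{d-1})^{b}=u^{(a-1)(1-d)+bc}v^{b(1-d)+b(d-1)}=u^{a+d-2}=1$, and the sign exponent $(a+b-1)(1-d)+(c+d-1)b=a+d-2=0$, hence $q^{s_1(1-d)+s_2b}=1$ exactly, giving $|{\rm Trace}\,\Lambda_n|=n^{-1/2}\cdot n=\sqrt n$. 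The paper instead returns to the angle equations \eqref{eqq3.10} and shows $s_1(1-d)+s_2b=k_1(1-d)+k_2b=0$ as an exact integer identity. Either route works; as written, yours identifies the crux correctly but proves the wrong identity and then asserts the needed one. The same correction applies to your case (2).
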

\begin{proof}
We only prove the statement (1) (the proof for the statement (2) is similar). Let
$[\gamma]\in \mathcal{X}_{SL(2,\mathbb{C})}(T^2)$, with
$\gamma(\alpha)$=
$\begin{pmatrix}
  \lambda_{1} & 0\\
  0 & \lambda_{1}^{-1}
\end{pmatrix}$ and
$\gamma(\beta)$=
$\begin{pmatrix}
  \lambda_{2} & 0\\
  0 & \lambda_{2}^{-1}
\end{pmatrix}$, be any $A$-invariant  character.

We use the same notation as in Remark \ref{rem3.2}. Then we have
\begin{equation*}
\begin{split}
s_1(1-d) + s_2b &= r_1(a-1)(1-d) + r_2b(1-d) + k_1(1-d) + r_1cb + r_2(d-1)b + k_2b\\
&=  k_1(1-d) + k_2b.\\
\end{split}
\end{equation*}
From equation \ref{eqq3.10}, we can get
\begin{equation*}
\begin{split}
&2\pi((1-d)k_1 + k_2b) = (1-d)2\pi k_1 + b2\pi k_2\\
= &(1-d)(a-1)\theta_1 + b(1-d)\theta_2 + bc\theta_1 + b(d-1)\theta_2 = 0.\\
\end{split}
\end{equation*}
Thus we have
$$(1-d)k_1 + k_2b = s_1(1-d) + s_2b = 0.$$

From equation \ref{eqq3.13}, we know
\begin{equation*}
\begin{split}
{\rm{Trace}}\Lambda_n &= n^{-\frac{1}{2}}\sum_{0\leq t\leq n-1} ((-1)^{(a+d-2)r} )^t q^{\frac{r}{2}((a+d-2)t^2 + 2(s_1(1-d) + s_2b)t)}
= n^{-\frac{1}{2}} n = \sqrt{n}.
\end{split}
\end{equation*}

\end{proof}

The following Theorem shows the limit superior related to the trace of intertwiners for any diffeomorphism of the closed torus is zero, which equals  the simplicial volume of the corresponding mapping torus.

\begin{thm}\label{thm4.1}
Let $A =\begin{pmatrix}
  a & b\\
  c & d
\end{pmatrix}$ be any fixed element in the mapping class group for the closed torus, and let $[\gamma]$ be any fixed $A$-invariant character with
$\gamma(\alpha)$=
$\begin{pmatrix}
  \lambda_{1} & 0\\
  0 & \lambda_{1}^{-1}
\end{pmatrix}$ and
$\gamma(\beta)$=
$\begin{pmatrix}
  \lambda_{2} & 0\\
  0 & \lambda_{2}^{-1}
\end{pmatrix}$. If  $1=\lambda_1^{a-1}\lambda_2^{b}$ and $1=\lambda_1^{c}\lambda_2^{d-1}$, let
 $\{\Lambda_n\}_{n\in 2\mathbb{Z}_{\geq 0}+1}$  be intertwiners obtained in Theorem \ref{thm2.3} such that $|\det(\Lambda_n)| = 1$ for all $n\in 2\mathbb{Z}_{\geq 0}+1$.
 If $1=\lambda_1^{a+1}\lambda_2^{b}$ and $1=\lambda_1^{c}\lambda_2^{d+1}$, let
 $\{\Lambda_n\}_{n\in 2\mathbb{Z}_{\geq 0}+1}$ be intertwiners obtained in Theorem \ref{thm2.4} such that $|\det(\Lambda_n)| = 1$ for all $n\in 2\mathbb{Z}_{\geq 0}+1$.
 Then we have
\begin{equation*}
\limsup_{\text{odd }n\rightarrow \infty} \frac{\log(|{\rm{Trace}}\Lambda_n|)}{n} = 0.
\end{equation*}
\end{thm}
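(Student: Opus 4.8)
The plan is to prove the two inequalities $\limsup\le 0$ and $\limsup\ge 0$ separately. The first is immediate: by Theorem \ref{thm2.5} (together with its analogue for the intertwiner of Theorem \ref{thm2.4}) we have $|{\rm Trace}\,\Lambda_n|\le n^{3/2}$ for every odd $n$, whence $\frac{\log|{\rm Trace}\,\Lambda_n|}{n}\le \frac{3\log n}{2n}\to 0$. So it remains to show that $\frac{\log|{\rm Trace}\,\Lambda_n|}{n}$ does not go to $-\infty$. Concretely, I would exhibit an infinite set $N$ of odd $n$ along which $|{\rm Trace}\,\Lambda_n|\ge 1$ (in fact it will always be $1$, $\sqrt n$, or $n$); since $1\le|{\rm Trace}\,\Lambda_n|\le n^{3/2}$ on $N$, this forces $\frac{\log|{\rm Trace}\,\Lambda_n|}{n}\to 0$ along $N$, hence $\limsup\ge 0$, and combined with the first inequality the theorem follows.

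The lower bound rests on the explicit trace formulas together with quadratic Gauss sums. Since $q^{1/2}=e^{\pi i/n}$, the number $\zeta:=-q^{1/2}$ is a primitive $n$-th root of unity; the Gauss sum lemma then gives $\bigl|\sum_{t=0}^{n-1}\zeta^{kt^2}\bigr|=\sqrt{(k,n)\,n}$, and by completing the square modulo the odd integer $n$ (which is legitimate as soon as $(2k,n)=1$, i.e. $(k,n)=1$) this extends to $\bigl|\sum_{t=0}^{n-1}\zeta^{kt^2+\ell t}\bigr|=\sqrt{(k,n)\,n}$ for any integer $\ell$. Observing that every summand in \ref{eqq3.13} and \ref{eqq3.14} can be rewritten, using $t\equiv t^2\ (\mathrm{mod}\ 2)$ and $q=\zeta^2$, as $\zeta^{kt^2+\ell t}$ for suitable integers $k,\ell$, the problem of estimating ${\rm Trace}\,\Lambda_n$ becomes the problem of evaluating such a Gauss sum and choosing $n$ so that $(k,n)=1$.

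I would then split into cases according to $A=\begin{pmatrix}a&b\\c&d\end{pmatrix}$, treating the two families of invariant characters symmetrically. (i) If $|a+d|\neq 2$, then $b\neq 0$ automatically (otherwise $ad=1$ forces $a=d=\pm 1$); let $N$ be the set of odd $n$ coprime to $b(a+d-2)$ in the setting of Theorem \ref{thm2.3}, resp. to $b(a+d+2)$ in the setting of Theorem \ref{thm2.4} — this is infinite. For $n\in N$ the condition $(b,n)=1$ makes formula \ref{eqq3.13} (resp. \ref{eqq3.14}) applicable, and there ${\rm Trace}\,\Lambda_n=n^{-1/2}\sum_{t=0}^{n-1}\zeta^{kt^2+\ell t}$ with $k=(a+d-2)r$ (resp. $(a+d+2)r$) and $r\equiv b^{-1}\pmod n$, so $(k,n)=1$; hence $|{\rm Trace}\,\Lambda_n|=n^{-1/2}\sqrt n=1$. (ii) If $|a+d|=2$ and $b\neq 0$, then $c=0$ and $a=d=\pm1$; take $N$ to be the odd $n$ with $(b,n)=1$, and Lemma \ref{lmm3.11} gives $|{\rm Trace}\,\Lambda_n|=\sqrt n$ directly. (iii) If $b=0$, then $ad=1$, so $A=\pm\begin{pmatrix}1&0\\c&1\end{pmatrix}$ and formula \ref{eqq3.13} no longer applies. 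When $c=0$ (i.e. $A=\pm I$), the induced algebra automorphism $F_A$ is the identity, so $\Lambda_n$ is a scalar matrix and $|{\rm Trace}\,\Lambda_n|=n$. When $c\neq 0$, I would recompute $\overline\Lambda_n$ from Lemma \ref{lmm4.9} using the degenerate form of the coefficients $r_k$ of Remark \ref{rem4.1} (here $\gcd(b,n)=n$, $n'=1$, so $r_0=1$ and $r_k=0$ otherwise); then $\overline\Lambda_n$ is diagonal with entries $\pm\,u^{ct}q^{ct^2/2}$, already of determinant modulus $1$, and ${\rm Trace}\,\Lambda_n=\sum_{t=0}^{n-1}\zeta^{ct^2+jt}$ for a suitable integer $j$, of modulus $\sqrt{(c,n)\,n}$; taking $N$ = odd $n$ coprime to $c$ gives $|{\rm Trace}\,\Lambda_n|=\sqrt n$.

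In every case $N$ is infinite and $|{\rm Trace}\,\Lambda_n|\ge 1$ on $N$, which completes the lower bound and hence the theorem. The part I expect to be the main obstacle is the degenerate analysis rather than the generic case: first, one must be careful to choose infinitely many odd $n$ coprime to the relevant fixed integers (the leading Gauss-sum coefficient, and $b$ or $c$), precisely so that the square can be completed and the Gauss sum has genuine size $\sqrt n$ instead of collapsing to $0$ — Example \ref{exa4.1} shows this can really happen along other sequences; second, the case $b=0$, where \ref{eqq3.13} is unavailable and the intertwiner has to be rederived from Lemma \ref{lmm4.9}. Everything else is direct manipulation of the formulas already established.
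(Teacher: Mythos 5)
Your overall strategy is the same as the paper's: the upper bound comes from Theorem \ref{thm2.5}, and the lower bound comes from exhibiting an infinite set of odd $n$ on which $|{\rm Trace}\,\Lambda_n|\ge 1$ by evaluating the trace as a quadratic Gauss sum. Your generic case (i) is correct, and the completing-the-square observation ($|\sum_t\zeta^{kt^2+\ell t}|=\sqrt{(k,n)n}$ whenever $(k,n)=1$, $n$ odd) is in fact a small simplification of the paper's argument, which instead solves a linear system for $r_1,r_2$ to force $s_1\equiv s_2\equiv 0\pmod{n_j}$. However, both degenerate cases contain genuine errors as written.

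In case (ii), the claim that $|a+d|=2$ and $b\ne 0$ force $c=0$ and $a=d=\pm1$ is false (e.g.\ $\begin{pmatrix}2&1\\-1&0\end{pmatrix}$); this by itself is harmless, but the appeal to Lemma \ref{lmm3.11} is not. That lemma only covers the matched pairs: $a+d=2$ with the Theorem \ref{thm2.3} intertwiner, and $a+d=-2$ with the Theorem \ref{thm2.4} intertwiner. The mismatched pairs do occur — for any $A$ with $a+d=-2$ the Theorem \ref{thm2.3} conditions $1=\lambda_1^{a-1}\lambda_2^b$, $1=\lambda_1^{c}\lambda_2^{d-1}$ always have solutions (e.g.\ $\lambda_1=\lambda_2=1$) — and your case division sends them to Lemma \ref{lmm3.11}, which does not apply. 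They must instead be routed to your Gauss-sum argument: there the quadratic coefficient is $r(a+d-2)=-4r$ (resp.\ $+4r$), automatically coprime to odd $n$. This is exactly why the paper splits the Theorem \ref{thm2.3} branch as $a+d\ne 2$ versus $a+d=2$ rather than by $|a+d|$.

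In case (iii) there are two errors. First, for $A=-I$ with the Theorem \ref{thm2.3} normalization, $F_{A,+}$ is the inversion $\theta_{(i,j)}\mapsto\theta_{(-i,-j)}$, not the identity, so $\Lambda_n$ is not scalar; its trace has modulus $1$, not $n$ (and symmetrically for $A=I$ with $F_{A,-}$). Second, for $b=0$, $c\ne 0$ and $a=d=-1$ in the Theorem \ref{thm2.3} branch, Lemma \ref{lmm4.9} gives $(\overline{\Lambda}_n)_{k,t}$ supported on $k\equiv -t\pmod n$: the matrix is anti-diagonal, with exactly one nonzero diagonal entry, so $|{\rm Trace}\,\Lambda_n|=1$ for every odd $n$ rather than $\sqrt{(c,n)n}$; the full Gauss sum only appears in the diagonal subcase $a=d=1$. (This is precisely the dichotomy in the paper's Case I.) In every mishandled subcase the needed inequality $|{\rm Trace}\,\Lambda_n|\ge 1$ still holds, so the theorem survives, but the reasons given are wrong and the case analysis must be repaired along the lines above.
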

\begin{proof}
 Since $[\gamma]$ is $A$-invariant, we have $1=\lambda_1^{a-1}\lambda_2^{b}, 1=\lambda_1^{c}\lambda_2^{d-1}$
or $1=\lambda_1^{a+1}\lambda_2^{b}, 1=\lambda_1^{c}\lambda_2^{d+1}$. We look at the case when $1=\lambda_1^{a-1}\lambda_2^{b}, 1=\lambda_1^{c}\lambda_2^{d-1}$.
Then we can set $\Lambda_n $ to be $a_n\overline{\Lambda}_n$ where $a_n = |\det(\overline{\Lambda}_n)|^{-\frac{1}{n}}$.

Case $\uppercase\expandafter{\romannumeral1}$
 when $b = 0$. In this case we know $|\det(\overline{\Lambda}_n)| = 1$ since $n^{'} = 1$.

  We have $A =\begin{pmatrix}
  1 & 0\\
  c & 1
\end{pmatrix}$  or $\begin{pmatrix}
  -1 & 0\\
  c & -1
\end{pmatrix}$. 
We first consider the case when
$A =\begin{pmatrix}
  1 & 0\\
  c & 1
\end{pmatrix}$. If $c = 0$, it is trivial. So suppose $c\neq 0$.
 Since we have $\lambda_1^{a-1}\lambda_2^{b} = 1, \lambda_1^{c}\lambda_2^{d-1} =1$, then we get $\lambda_1^{c} = 1$.
Suppose $\lambda_1 = e^{i\theta}$, then we get $\theta c = 2k\pi$ where $k$ is an integer. Since $u^n = -\lambda_1$,
we can choose $u = -e^{\frac{i\theta}{n}}q^{r}$ where $r$ is an integer. Then we have
$$u^c = (-1)^c e^{\frac{i\theta c}{n}}q^{cr} = (-1)^c e^{\frac{2k\pi i}{n} } q^{cr} = (-1)^c q^{k + cr}.$$
Note that
$|\rm{Trace}\Lambda_n|$ is
independent of the choice of $r$.

From Remark \ref{rem4.1} and Lemma \ref{lmm4.9}, we can get $\Lambda_n$ is a diagonal matrix, and $$(\overline{\Lambda}_n)_{t,t} =(v^{(d-1)}u^{c})^t q^{c(t^2-dt^2/2)}.$$
Then we have
$${\rm{Trace}}\Lambda_n = \sum_{0\leq t\leq n-1} v^{t(d-1)}u^{tc}q^{cdt^2/2} = \sum_{0\leq t\leq n-1} (-1)^{ct}q^{ct^2/2}q^{(k+cr)t}.$$
Let $\{n_i\}_{i\in\mathbb{N}}$ be a subsequence of $2\mathbb{Z}_{\geq 0}+1$
such that $(n_i, c) = 1$ for all $i$. Then for every $i$ there exists $r$ such that $k+cr \equiv 0$ (\mod $n_i$), thus
$$|{\rm{Trace}}\Lambda_{n_i}|  = |\sum_{0\leq t\leq n_i-1} (-1)^{ct}q^{ct^2/2}| = \sqrt{(n_i,c)n_i} = \sqrt{n_i}\geq 1.$$
Thus we have $$0\leq \limsup_{\text{odd }n\rightarrow \infty} \frac{\log(|{\rm{Trace}}\Lambda_n|)}{n}.$$
According to Theorem \ref{thm2.5}, we also have
$$\limsup_{\text{odd }n\rightarrow \infty} \frac{\log(|{\rm{Trace}}\Lambda_n|)}{n} \leq \limsup_{\text{odd }n\rightarrow \infty} \frac{\log(n^{\frac{3}{2}})}{n} =0.$$

We look at the case when
$A =\begin{pmatrix}
  -1 & 0\\
  c & -1
\end{pmatrix}$. Then we get $(\lambda_1)^2 = 1$ and $\lambda_1 = \pm 1$. We can choose $u = \pm 1$. From Remark \ref{rem4.1} and Lemma \ref{lmm4.9} we can get  $$(\overline{\Lambda}_n)_{t,k} = r_{k-td} (v^{(d-1)}u^{c})^t q^{c(tk-dt^2/2)},$$
where $r_k = 1$ if $k$ is a multiple of $n$ and it is zero otherwise. Then $(\overline{\Lambda}_n)_{t,t}\neq 0$ if and only if $r_{2t} \neq 0$
if and only if $n|(2t)$, which means there is only one nonzero diagonal element.
Then we get $|{\rm{Trace}}\Lambda_n| = 1$ for any $n$, which proves this special case.

Case $\uppercase\expandafter{\romannumeral2}$ when  $b\neq 0$.

Subcase (1) when $a+d\neq 2$.
 From the above discussion we know $\lambda_1,\lambda_2$ are both roots of unity, thus we can suppose $\lambda_1 = e^{i\theta_1},
 \lambda_2 = e^{i\theta_2}$ and  we can get equation \ref{eqq3.10}
where $\theta_1,\theta_2,k_1,k_2$ are determined by $\gamma$.
Since $u^n = -\lambda_1, v^n=-\lambda_2$, we can write $u =- e^{\frac{i\theta_1}{n}}q^{r_1}, v =- e^{\frac{i\theta_2}{n}}q^{r_1}$. Then we have equation \ref{eqq3.11}. Note that $|{\rm{Trace}}\Lambda_n|$ is
independent of the choice of $r_1,r_2$. 

Since  $b\neq 0$ and $2-(a+d)\neq 0$,
let $\{n_j\}_{j\in\mathbb{N}}$ be a subsequence of $2\mathbb{Z}_{\geq 0}+1$ such that $(n_j, b) = (n_j, 2-(a+d)) = 1$.

Since
$\begin{bmatrix}
  a-1 & b\\
  c & d-1
\end{bmatrix} = 2 - (a+d)$ and $(n_j, 2 - (a+d)) = 1$, the following equations always have solutions in $\mathbb{Z}_{n_j}$
\begin{equation*}
\begin{cases}
r_1(a-1) + r_2b + k_1 = 0 \\
r_1c + r_2(d-1) + k_2 = 0
\end{cases}.
\end{equation*}
Thus for every $j$, there always exist integers $r_1,r_2$ such that $s_1 = s_2 = 0$ in $\mathbb{Z}_{n_j}$. Then from equation \ref{eqq3.13}, we have
\begin{equation*}
\begin{split}
|{\rm{Trace}}\Lambda_{n_j}| &= n_{j}^{-\frac{1}{2}}|\sum_{0\leq t\leq n_j-1} ((-1)^{(a+d-2)r} )^t q^{\frac{r}{2}((a+d-2)t^2 + 2(s_1(1-d) + s_2b)t)}|\\
&= n_{j}^{-\frac{1}{2}}|\sum_{0\leq t\leq n_j-1} ((-1)^{(a+d-2)r} )^t q^{\frac{r}{2}((a+d-2)t^2 )}|\\
&= n_{j}^{-\frac{1}{2}}|\sum_{0\leq t\leq n_j-1}  (-q^{\frac{1}{2}})^{r(a+d-2)t^2 }|\\
&= n_{j}^{-\frac{1}{2}} \sqrt{(r(a+d-2),n_j)n_j} = 1.
\end{split}
\end{equation*}
Then we have $$0\leq \limsup_{\text{odd }n\rightarrow \infty} \frac{\log(|{\rm{Trace}}\Lambda_n|)}{n}.$$
From Theorem \ref{thm2.5}, we  have
$$\limsup_{\text{odd }n\rightarrow \infty} \frac{\log(|{\rm{Trace}}\Lambda_n|)}{n} = 0.$$


Subcase (2) when  $a+d = 2$. Since  $b\neq 0$,
let $\{n_k\}_{k\in\mathbb{N}}$ be a subsequence of $2\mathbb{Z}_{\geq 0}+1$ such that $(n_k, b) = 1$ for all $k$. From Lemma \ref{lmm3.11},  we
have $|{\rm{Trace}}\Lambda_{n_k}| = \sqrt{n_k}\geq 1$. Similarly we get
$$\limsup_{\text{odd }n\rightarrow \infty} \frac{\log(|{\rm{Trace}}\Lambda_n|)}{n} = 0.$$

\end{proof}

From now on we discuss the periodic mapping class. Recall that $A =\begin{pmatrix}
  a & b\\
  c & d
\end{pmatrix}$ is periodic if and only if $|a+d| \in\{ 0, 1\}$.
Suppose $[\gamma]$ is an $A$-invariant character with
 $\gamma(\alpha)$=
$\begin{pmatrix}
  \lambda_{1} & 0\\
  0 & \lambda_{1}^{-1}
\end{pmatrix}$ and
$\gamma(\beta)$=
$\begin{pmatrix}
  \lambda_{2} & 0\\
  0 & \lambda_{2}^{-1}
\end{pmatrix}$. Then we have $1=\lambda_1^{a-1}\lambda_2^{b}, 1=\lambda_1^{c}\lambda_2^{d-1}$ or
$1=\lambda_1^{a+1}\lambda_2^{b}, 1=\lambda_1^{c}\lambda_2^{d+1}$. For the case when $1=\lambda_1^{a-1}\lambda_2^{b}, 1=\lambda_1^{c}\lambda_2^{d-1}$,
 the above discussion implies $1 = \lambda_1^{2-(a+d)} = \lambda_2^{2-(a+d)}$. So if $a+d = 0$, we have $\lambda_1^{2} = \lambda_2^{2} = 1$, and then
$\lambda_1 = \pm 1, \lambda_2 = \pm 1$. Thus there is no $A$-invariant character living in the Azumaya locus if $\lambda_1 = \pm 1, \lambda_2 = \pm 1$. But we can still get  intertwiners in Theorems \ref{thm2.3}
and \ref{thm2.4} although $\lambda_1 = \pm 1, \lambda_2 = \pm 1$. Now we consider  intertwiners if we choose $\lambda_1 = \lambda_2 = 1.$

\begin{thm}
Let $A$ be a periodic mapping class, and let $\Lambda_n$ be the intertwiner obtained in Theorem \ref{thm2.3} or \ref{thm2.4} by using the trivial $A$-invariant character, that is,
$\lambda_1 = \lambda_2 = 1$, and we require $|\det(\Lambda_n)| = 1$. We have the following conclusions:

(1) If $a+d = 1$ and $\Lambda_n$ is obtained in Theorem \ref{thm2.3}, we have $|{\rm{Trace}}(\Lambda_n)| = 1$ for any odd $n$.

(2) If $a+d = -1$ and $\Lambda_n$ is obtained in Theorem \ref{thm2.4}, we have $|{\rm{Trace}}(\Lambda_n)| = 1$ for any odd $n$.

(3) If $a+d = 0$ and $\Lambda_n$ is obtained in Theorem \ref{thm2.3} or \ref{thm2.4}, we have $|{\rm{Trace}}(\Lambda_n)| = 1$ for any odd $n$.
\end{thm}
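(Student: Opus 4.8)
The plan is to compute $\mathrm{Trace}(\Lambda_n)$ directly from the explicit intertwiner of Lemma~\ref{lmm4.9} (and its Theorem~\ref{thm2.4} counterpart), specialised to the trivial character. First I would set $\lambda_1=\lambda_2=1$ and take $u=v=-1$, which is legitimate since $n$ is odd; then in the notation of Remark~\ref{rem3.2} one has $s_1=s_2=0$, and the index $k_0$ of Remark~\ref{rem4.1} equals $0$, because the relevant phase $v^{n'b}u^{n'(a-1)}q^{ab(n')^2/2}$ collapses to $(-1)^{(a+1)(b+1)}=1$ (the vanishing exponent uses $\det A=1$, which forbids $a,b$ from both being even). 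Consequently $r_k\neq 0$ exactly when $m\mid k$, where $m=\gcd(b,n)$ and $n'=n/m$. A short arithmetic step — using $bc=ad-1$, $\gcd(a,b)=1$ and $a+d\in\{-1,0,1\}$ — shows $\gcd(1-d,m)=1$ in the Theorem~\ref{thm2.3} cases (resp. $\gcd(1+d,m)=1$ in the Theorem~\ref{thm2.4} case), so the nonzero diagonal entries of $\overline\Lambda_n$ occur precisely at $t\in\{0,m,\dots,(n'-1)m\}$: exactly $n'$ entries, each of modulus $1$.

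Next I would evaluate those $n'$ diagonal entries. Substituting $u=v=-1$ and $k_0=0$ into Remark~\ref{rem4.1} and Lemma~\ref{lmm4.9}, the entry at $t=sm$ simplifies to $(-q^{1/2})^{Qs^2}$ with $Q=ab(1-d)^2r^2+cm^2(2-d)$, where $r$ satisfies $br+sn=m$; collecting all the $\pm1$ factors into a single power of $-q^{1/2}$ is permissible because $Q$ is congruent mod $2$ to the linear sign-exponent, which follows from $(a+1)(b+1)$ and $(c+1)(d+1)$ being even. Since $-q^{1/2}=e^{\pi i(n+1)/n}$ is a primitive $n$-th root of unity $\omega$ for odd $n$, this yields
\begin{equation*}
\mathrm{Trace}(\Lambda_n)=(n')^{-1/2}\sum_{s=0}^{n'-1}\omega^{Qs^2}.
\end{equation*}

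To handle this partial Gauss sum I would reduce $Q$ modulo $n$. From $br\equiv m\pmod{n'}$ one finds $Q\equiv mR\pmod n$ with $bR\equiv m\bigl((1-d)^2a+bc(2-d)\bigr)\pmod{n'}$, and the identity $(1-d)^2a+bc(2-d)=a+d-2$ — obtained by expanding and substituting $bc=ad-1$ — turns this into $bR\equiv m(a+d-2)\pmod{n'}$. Hence $\omega^Q$ has order $n'/\gcd(R,n')$, and a standard evaluation of quadratic Gauss sums (cf. the Gauss sum lemma of this section, applied on $\mathbb{Z}_{n'/\gcd(R,n')}$) gives $|\mathrm{Trace}(\Lambda_n)|=\sqrt{\gcd(R,n')}$. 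So the whole statement reduces to $\gcd(R,n')=1$.

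That coprimality is the crux, and I would prove it one prime $p\mid n'$ at a time. If $p\nmid m$ then $p\nmid b$, and from $bR\equiv m(a+d-2)\pmod{p^{\mathrm{ord}_p(n')}}$ with $a+d-2\in\{-1,-2\}$ (resp. $a+d+2\in\{1,2\}$ for Theorem~\ref{thm2.4}) coprime to the odd number $n$, we get $p\nmid R$. If $p\mid m$ then $p\mid b$ and $p\nmid a$, the term $cm(2-d)$ dies mod $p$, so $R\equiv(1-d)^2ar\pmod p$; here $p\nmid r$ (since $br+sn=m$ forces $\mathrm{ord}_p(r)=0$) and $p\nmid(1-d)$ (since $1-d$ is $\pm a$ or $1\pm a$ depending on the case, each coprime to the odd prime $p$ by the same $bc=ad-1$, $\gcd(a,b)=1$ argument as above). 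Thus $\gcd(R,n')=1$ and $|\mathrm{Trace}(\Lambda_n)|=1$ for every odd $n$, in all three cases; the Theorem~\ref{thm2.4} variants run verbatim with $(1-d,\,a+d-2)$ replaced by $(1+d,\,a+d+2)$. I expect the main obstacle to be the two delicate ingredients — the bookkeeping that reduces the diagonal to the clean form $(-q^{1/2})^{Qs^2}$, and, above all, the arithmetic identity $(1-d)^2a+bc(2-d)=a+d-2$ feeding the prime-by-prime coprimality check; once the trivial character has killed $s_1$, $s_2$ and $k_0$, the rest is routine.
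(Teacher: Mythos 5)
Your proposal is correct and follows essentially the same route as the paper: specialise to $u=v=-1$ (so $k_0=0$), locate the nonzero diagonal entries at the multiples of $m$ via $\gcd(m,d\mp 1)=1$, reduce the trace to a quadratic Gauss sum over $\mathbb{Z}_{n'}$, and finish with the Gauss-sum lemma plus a coprimality check. The only difference is cosmetic: the paper massages the exponent directly into $(a+d\mp 2)rm$ using $q^{br}=q^{m}$-type substitutions, whereas you reduce $Q$ modulo $n$ and invoke the identity $(1-d)^2a+bc(2-d)=a+d-2$ together with a prime-by-prime verification of $\gcd(R,n')=1$ — the same computation organised slightly differently (and your explicit treatment of $\gcd(r,n')=1$ fills in a step the paper leaves implicit).
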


\begin{proof}
Suppose $A =\begin{pmatrix}
  a & b\\
  c & d
\end{pmatrix}$, $(b,n) = m$, $br + sn = m$ and $n = m n^{'}$. Since $\lambda_1 = \lambda_2 = 1$, we can set $ u = v = -1$. From the previous discussion we know
$$(\overline{\Lambda}_n)_{k,t} = (-1)^{cdt} r_{k-td}  q^{c(tk-dt^2/2)}$$ for all $0\leq k,t\leq n-1,$ where
$$r_{tm} =(-1)^{abrt} q^{abt^2r^2/2}, \forall 0\leq t\leq n^{'}-1,$$ and all other $r_k$ are  0.
For the $l$-th column, we have $\{(\overline{\Lambda}_n)_{ld+km, l}\}_{0\leq k\leq n^{'} - 1}$  are the only nonzero entries.
Then the $l$-th column  contains a nonzero diagonal entry if and only if $ld + km \equiv l$ (mod $n$) for some $0\leq k\leq n^{'}-1$.
It is easy to show $ld + km \equiv l$ (mod $n$) for some $0\leq k\leq n^{'}-1$ if and only if $m|(ld-l)$.

Now we suppose $(m, d-1)=1$. Then the $l$-th column of $\overline{\Lambda}_n$  contains a nonzero diagonal entry   if and only if $m|l$. Thus $(\overline{\Lambda}_n)_{tm,tm},0\leq t\leq n^{'}-1$, are  the only nonzero diagonal entries.
\begin{equation*}
\begin{split}
(\overline{\Lambda}_n)_{tm,tm} &= (-1)^{cdtm} r_{tm-tmd}  q^{c(t^2m^2-dt^2m^2/2)}\\
&= (-1)^{cdtm} (-1)^{abr(t-td)} q^{abr^2(1-d)^2t^2/2}  q^{c(t^2m^2-dt^2m^2/2)}.
\end{split}
\end{equation*}
After a similar calculation as in Remark \ref{rem3.2}, we can get
$$(\overline{\Lambda}_n)_{tm,tm} = (-1)^{(ar+dr-2r)t}(q^{\frac{m}{2}})^{(ar+dr-2r)t^2}.$$
Then we have
$${\rm{Trace}}\overline{\Lambda}_n = \sum_{0\leq t\leq n^{'}-1} (-1)^{(ar+dr-2r)t}(q^{\frac{m}{2}})^{(ar+dr-2r)t^2}.$$

Similarly if $(m, d+1) = 1$, we have
$${\rm{Trace}}\widetilde{\Lambda}_n = \sum_{0\leq t\leq n^{'}-1} (-1)^{(ar+dr+2r)t}(q^{\frac{m}{2}})^{(ar+dr+2r)t^2}.$$

(1) Since the intertwiner is obtained in Theorem \ref{thm2.3}, we can set $\Lambda_n = (n^{'})^{-\frac{1}{2}}\overline{\Lambda}_n$. We have
$d-1 = -a$ because $a+d = 1$. Then we get $(d-1, m) = 1$ because $(a,b) = 1$ and $m$ is a divisor of $b$. Then from the above discussion, we get
\begin{equation*}
\begin{split}
|{\rm{Trace}}\Lambda_n| &= (n^{'})^{-\frac{1}{2}} |\sum_{0\leq t\leq n^{'}-1} (-1)^{(ar+dr-2r)t}(q^{\frac{m}{2}})^{(ar+dr-2r)t^2}|\\
&= (n^{'})^{-\frac{1}{2}} |\sum_{0\leq t\leq n^{'}-1} (-1)^{-rt}(q^{\frac{m}{2}})^{-rt^2}|
= (n^{'})^{-\frac{1}{2}} \sqrt{(-r,n^{'})n^{'}} = 1.
\end{split}
\end{equation*}

(2) The proof is similar with (1).

(3) First we show $(m, d-1) = (m, d+1) = 1$ if $a+d = 0$. From $ad - bc = 1$, we get $-bc = d^2 + 1$.
Suppose $p|m$ and $p|d-1$, then $p|(d^2 + 1)$ and $p|(d^2-1)$. Thus we get $p|2$, which means $p=1$ because $(m,2) = 1$.
 Similarly we can show $(m, d+1) =1$. If $\Lambda_n =(n^{'})^{-\frac{1}{2}}\overline{\Lambda}_n$, then we
 \begin{equation*}
\begin{split}
|{\rm{Trace}}\Lambda_n| &= (n^{'})^{-\frac{1}{2}} |\sum_{0\leq t\leq n^{'}-1} (-1)^{(ar+dr-2r)t}(q^{\frac{m}{2}})^{(ar+dr-2r)t^2}|\\
&= (n^{'})^{-\frac{1}{2}} |\sum_{0\leq t\leq n^{'}-1} (q^m)^{-rt^2}|
= (n^{'})^{-\frac{1}{2}} \sqrt{(-r,n^{'})n^{'}} = 1.
\end{split}
\end{equation*}

If $\Lambda_n =(n^{'})^{-\frac{1}{2}}\widetilde{\Lambda}_n$, similarly we can show $|{\rm{Trace}}\Lambda_n|=1$.
\end{proof}

\section{The volume conjecture for surface diffeomorphisms: periodic case}\label{sectiiii}


\subsection{Preliminaries for the volume conjecture for periodic surface diffeomorphisms}
If  we want to formulate the parallel conjecture for periodic diffeomorphisms as in \cite{BW5,BW6}, we have to find a good invariant character that lives in the smooth part of $\mathcal{X}_{SL(2,\mathbb{C})}(S)$.

\begin{lmm}[\cite{MC}]\label{lmm4.12}
Let $A, B \in SL(2,\mathbb{C})$. If ${\rm{Trace}}([A, B]) = 2$ where $[A,B] = ABA^{-1}B^{-1}$, then $G = \langle A, B\rangle \leq SL(2,\mathbb{C})$ is  not free of rank two
where $\langle A, B\rangle$ is the group generated by $A,B$.
\end{lmm}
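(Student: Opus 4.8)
The plan is to extract from the numerical hypothesis $\mathrm{Trace}([A,B]) = 2$ the geometric conclusion that $A$ and $B$ possess a common eigenvector in $\mathbb{C}^2$, and then to finish by the soft observation that a pair of simultaneously triangularizable elements of $SL(2,\mathbb{C})$ cannot generate a rank-two free group.

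First I would dispose of the central case: if $A = \pm I$ or $B = \pm I$, then $G = \langle A, B\rangle$ is cyclic modulo the center, hence not free of rank two, and there is nothing to prove. So assume $A \neq \pm I$. After conjugating $G$ I may put $A$ in normal form, either $A = \mathrm{diag}(a, a^{-1})$ with $a \neq \pm 1$, or $A = \pm\left(\begin{smallmatrix} 1 & 1 \\ 0 & 1\end{smallmatrix}\right)$. Writing $B = \left(\begin{smallmatrix} p & q \\ r & s\end{smallmatrix}\right)$ with $ps - qr = 1$, a short direct computation of $ABA^{-1}B^{-1}$ gives, in the diagonalizable case, $\mathrm{Trace}([A,B]) = 2 - qr\,(a - a^{-1})^2$, and in the parabolic case $\mathrm{Trace}([A,B]) = 2 + r^2$. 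In both cases the hypothesis $\mathrm{Trace}([A,B]) = 2$ forces $q = 0$ or $r = 0$; in either normal form this says that $B$ preserves an eigenline of $A$ (the unique one, in the parabolic case). Hence $A$ and $B$ share an eigenvector. Alternatively one could invoke the Fricke trace identity $\mathrm{Trace}([A,B]) = x^2 + y^2 + z^2 - xyz - 2$ with $x = \mathrm{Trace}\,A$, $y = \mathrm{Trace}\,B$, $z = \mathrm{Trace}\,AB$, together with the classical fact that $x^2 + y^2 + z^2 - xyz - 4 = 0$ cuts out exactly the reducible locus in the $SL(2,\mathbb{C})$-character variety of the free group $F_2$; I would nevertheless favour the explicit matrix computation, since it keeps the argument self-contained.

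Given a common eigenvector, I would conjugate $G$ into the Borel subgroup $\mathcal{B} \subset SL(2,\mathbb{C})$ of upper-triangular matrices. Since $\mathcal{B}$ is an extension of $\mathbb{C}^{*}$ by the abelian unipotent group $\mathbb{C}$, it is metabelian, hence solvable; therefore its subgroup $G$ is solvable as well. A nonabelian free group is not solvable — its derived series never terminates — so $G$ cannot be free of rank two, which is the assertion.

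The crux is the middle step, converting $\mathrm{Trace}([A,B]) = 2$ into the existence of a common eigenvector; the algebra is brief once $A$ is normalized, but I expect the care to go into the case distinction (the parabolic case, where $A$ has a single eigenline, behaves differently from the diagonalizable case) and into handling $A = \pm I$ at the outset, so that the appeal to a normal form for $A$ is legitimate.
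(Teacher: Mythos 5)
Your proof is correct. The paper itself gives no argument for this lemma --- it is quoted from the reference [MC] --- and your route (normalize $A$, compute ${\rm Trace}([A,B]) = 2 - qr(a-a^{-1})^2$ in the diagonalizable case and $2+r^2$ in the parabolic case, deduce a common eigenline, conjugate into the solvable Borel subgroup, and conclude since $F_2$ is not solvable) is precisely the standard proof of this fact; both trace computations check out, and the degenerate case $A=\pm I$ is handled properly at the outset.
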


\begin{lmm}\label{lmm3.7}
Let $G$ be a  subgroup of $SL(2,\mathbb{C})$ freely generated by two elements, let $R$ be the subalgebra of $Mat(2,\mathbb{C})$ generated by $G$,  where $Mat(2,\mathbb{C})$ is  the algebra of all 2 by 2 complex matrices. Then
$R = Mat(2,\mathbb{C})$.
\end{lmm}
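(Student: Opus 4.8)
The plan is to deduce $R = Mat(2,\mathbb{C})$ from the fact that $G$ acts \emph{irreducibly} on $\mathbb{C}^2$, together with Burnside's theorem on matrix algebras. First note that $R$ is a unital subalgebra: since $G$ is a group it contains the identity matrix, so $I\in R$; also $R$ is a linear subspace of the $4$-dimensional space $Mat(2,\mathbb{C}) = \mathrm{End}(\mathbb{C}^2)$. Burnside's theorem says that any unital subalgebra of $\mathrm{End}(W)$ acting irreducibly on a finite-dimensional vector space $W$ over an algebraically closed field equals $\mathrm{End}(W)$. Since $\dim_{\mathbb{C}}\mathbb{C}^2 = 2$, a proper nonzero invariant subspace would be a line, so it suffices to show that $\mathbb{C}^2$ has no $G$-invariant line.

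I would argue this by contradiction. Suppose $\mathbb{C}v\subseteq\mathbb{C}^2$ is invariant under every element of $G$; then $v$ is a common eigenvector of the two free generators $A,B$ of $G$. Conjugating $G$ by a matrix carrying $v$ to $e_1$ changes neither the freeness of $G$ nor the desired conclusion (conjugation is an algebra automorphism of $Mat(2,\mathbb{C})$), so we may assume $A$ and $B$ are both upper triangular. Then $[A,B] = ABA^{-1}B^{-1}$ is upper triangular, and a one-line computation of its diagonal shows both diagonal entries equal $1$: the diagonal of a product of upper-triangular matrices is the product of the diagonals, and the two diagonal characters of $A$ and $B$ cancel in the commutator. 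Hence $\mathrm{Trace}([A,B]) = 2$. By Lemma \ref{lmm4.12}, $\langle A,B\rangle = G$ is then not free of rank two, contradicting the hypothesis. Therefore no $G$-invariant line exists, $G$ acts irreducibly, and Burnside's theorem yields $R = Mat(2,\mathbb{C})$.

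The argument is short, and its only substantial input is Burnside's theorem; the points to handle carefully are just that $R$ is unital (so the theorem applies in its usual form) and that replacing $G$ by a conjugate is harmless. An alternative that avoids citing Burnside would be to check directly that $\{I, A, B, AB\}$ is linearly independent under the irreducibility hypothesis, but in dimension two this essentially re-proves Burnside, so I would prefer the clean citation. Likewise, the appeal to Lemma \ref{lmm4.12} could be replaced by the standard fact that the upper-triangular subgroup of $SL(2,\mathbb{C})$ is solvable and hence has no rank-two free subgroup; I use Lemma \ref{lmm4.12} because it is already available in the text.
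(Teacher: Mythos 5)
Your proof is correct, but it takes a genuinely different route from the paper's. The paper proves the lemma by a direct, hands-on case analysis: it conjugates so that $A$ is upper triangular, splits into the cases $v=0$ and $v\neq 0$ (and within the latter $u=\pm 1$ versus $u\neq\pm1$), and in each case explicitly manufactures the matrix units $\left(\begin{smallmatrix}1&0\\0&0\end{smallmatrix}\right)$, $\left(\begin{smallmatrix}0&1\\0&0\end{smallmatrix}\right)$, etc.\ inside $R$, invoking Lemma \ref{lmm4.12} at several points to rule out degenerate configurations (e.g.\ $b=0$ or $c=0$) via ${\rm{Trace}}([A,B])=2$. You instead observe that the conclusion is exactly Burnside's theorem applied to the $G$-action on $\mathbb{C}^2$, so the whole lemma reduces to irreducibility of that action; a reducible action would give a common eigenvector, hence simultaneous upper-triangularizability of the free generators, hence ${\rm{Trace}}([A,B])=2$ by the diagonal cancellation in the commutator, contradicting Lemma \ref{lmm4.12}. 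Your single use of Lemma \ref{lmm4.12} thus replaces the paper's several, and the case analysis disappears entirely. What the paper's version buys is self-containedness (it constructs a spanning set of $R$ with bare hands and never cites Burnside); what yours buys is brevity and transparency, and it makes explicit the equivalence between the algebra statement and irreducibility that the paper only uses implicitly when it derives Proposition \ref{prop3.1} from this lemma. All the individual steps you rely on --- $I\in R$ since $G$ is a group, invariance of trace and of freeness under conjugation, and the diagonal computation giving ${\rm{Trace}}([A,B])=2$ --- check out.
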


\begin{proof}
Suppose $G$ is freely generated by $A,B$. We know there exists $X\in GL(2,\mathbb{C})$ such that
$XAX^{-1}=$
$\begin{pmatrix}
  u & v\\
  0 & u^{-1}
\end{pmatrix}$,
$XBX^{-1}$=
$\begin{pmatrix}
  a & b\\
  c & d
\end{pmatrix}$.
Then $XGX^{-1}$ is a free subgroup generated by $XAX^{-1}$ and $XBX^{-1}$, and
$XRX^{-1}$ is the subalgebra generated by $XGX^{-1}$. Since $XRX^{-1} = Mat(2,\mathbb{C})$ if and only if $R = Mat(2,\mathbb{C})$,
 we can assume
 $A=$
$\begin{pmatrix}
  u & v\\
  0 & u^{-1}
\end{pmatrix}$,
$B$=
$\begin{pmatrix}
  a & b\\
  c & d
\end{pmatrix}$.

$\uppercase\expandafter{\romannumeral 1}. $  Suppose $v=0$. Then
$A=$
$\begin{pmatrix}
  u & 0\\
  0 & u^{-1}
\end{pmatrix}$ and $u^2\neq 1$, otherwise we have
$A=$
$\begin{pmatrix}
  1 & 0\\
  0 & 1
\end{pmatrix}$ or
$A=$
$\begin{pmatrix}
  -1 & 0\\
  0 & -1
\end{pmatrix}$, which contradicts the fact that $G$ is freely generated by $A,B$.
We also can get $b\neq 0$ and $c\neq 0$.
Otherwise we have ${\rm{Trace}}([A,B]) = 2$, which contradicts the fact that $A,B$ freely generate $G$ by Lemma \ref{lmm4.12}.
Since $\begin{pmatrix}
  u & 0\\
  0 & u^{-1}
\end{pmatrix},
\begin{pmatrix}
  1 & 0\\
  0 & 1
\end{pmatrix}\in R$ and $u\neq\pm1$, we have
$\begin{pmatrix}
  1 & 0\\
  0 & 0
\end{pmatrix},
\begin{pmatrix}
  0 & 0\\
  0 & 1
\end{pmatrix}\in R$. Then
$\begin{pmatrix}
  0 & b\\
  c & 0
\end{pmatrix}\in R$.
From multiplication, we get
$
\begin{pmatrix}
  0 & b\\
  0 & 0
\end{pmatrix},
\begin{pmatrix}
  0 & 0\\
  c & 0
\end{pmatrix}
\in R$, which implies $R = Mat(2,\mathbb{C})$ since $b\neq 0, c\neq 0$.

$\uppercase\expandafter{\romannumeral 2}. $ Suppose $v\neq0$. In this case we should have $c\neq 0$, otherwise
we have ${\rm{Trace}}([A,B]) = 2$, which is a contradiction.

If $u=\pm1$, then
$A$=
$\begin{pmatrix}
  u & v\\
  0 & u
\end{pmatrix}\in R$. Remember we also have
$\begin{pmatrix}
  1 & 0\\
  0 & 1
\end{pmatrix}\in R$, which implies
$\begin{pmatrix}
  0 & v\\
  0 & 0
\end{pmatrix}\in R$. Furthermore we have
$\begin{pmatrix}
  0 & 1\\
  0 & 0
\end{pmatrix}\in R$ because $v\neq 0$.
From $
\begin{pmatrix}
  1 & 0\\
  0 & 1
\end{pmatrix},
\begin{pmatrix}
  0 & 1\\
  0 & 0
\end{pmatrix},
\begin{pmatrix}
  a & b\\
  c & d
\end{pmatrix}
\in R$, we can get
$
\begin{pmatrix}
  a & 0\\
  c & d
\end{pmatrix}
\in R$, and also
$
\begin{pmatrix}
  a-d & 0\\
  c & 0
\end{pmatrix}
\in R$.
From multiplication, we get
$
\begin{pmatrix}
  0 & 1\\
  0 & 0
\end{pmatrix}
\begin{pmatrix}
  a-d & 0\\
  c & 0
\end{pmatrix}=
\begin{pmatrix}
  c & 0\\
  0 & 0
\end{pmatrix}
\in R$, which implies
$\begin{pmatrix}
  1 & 0\\
  0 & 0
\end{pmatrix}
\in R$ because $c\neq 0$.
We  have
$\begin{pmatrix}
  1 & 0\\
  0 & 1
\end{pmatrix},
\begin{pmatrix}
  0 & 1\\
  0 & 0
\end{pmatrix},
\begin{pmatrix}
  1 & 0\\
  0 & 0
\end{pmatrix}\in R$, then
$\begin{pmatrix}
  0 & 0\\
  0 & 1
\end{pmatrix},
\begin{pmatrix}
  0 & 1\\
  0 & 0
\end{pmatrix},
\begin{pmatrix}
  1 & 0\\
  0 & 0
\end{pmatrix}\in R$.
Remember we also have $
\begin{pmatrix}
  a & b\\
  c & d
\end{pmatrix}
\in R$ and $c\neq 0$, which implies $R= Mat(2,\mathbb{C})$.

If $u\neq \pm1$. We have
$\begin{pmatrix}
  u-u^{-1} & v\\
  0 & 0
\end{pmatrix},
\begin{pmatrix}
  0 & v\\
  0 & u^{-1}-u
\end{pmatrix}
\in R$, so
$\begin{pmatrix}
  1 & k\\
  0 & 0
\end{pmatrix},
\begin{pmatrix}
  0 & -k\\
  0 & 1
\end{pmatrix}
\in R$ where $k=v/(u-u^{-1})$.
Then from multiplication, we get
$$\begin{pmatrix}
  1 & k\\
  0 & 0
\end{pmatrix}
\begin{pmatrix}
  a & b\\
  c & d
\end{pmatrix}=
\begin{pmatrix}
  a+kc & b+kd\\
  0 & 0
\end{pmatrix}
\in R.$$
Next we want to show $b+kd\neq k(a+kc)$. Suppose on the contrary. Then we have $b+kd = k(a+kc) = ka + k^2 c$.
With $k= v/(u-u^{-1})$, we can get
\begin{align*}
b+\frac{dv}{u-u^{-1}} = \frac{av}{u-u^{-1}} + \frac{cv^2}{(u-u^{-1})^2}
\Longrightarrow &2b=  bu^2+bu^{-2} + dvu-dvu^{-1} - avu + avu^{-1} - cv^2.
\end{align*}
Then we get
\begin{align*}
&{\rm{Trace}}([A,B])= ad+acuv+cdvu^{-1}+c^2v^2 - cbu^2 - cduv -cbu^{-2} - cau^{-1}v + ad\\
= & 2ad -c(-auv-dvu^{-1}-cv^2 + bu^2 + duv + bu^{-2} + au^{-1}v)
=  2ad - 2cb
=  2.
\end{align*}
Since ${\rm{Trace}}([A,B]) = 2$ is a contradiction, we have $b+kd\neq k(a+kc)$.
We can get $\begin{pmatrix}
  1 & 0\\
  0 & 0
\end{pmatrix},
\begin{pmatrix}
  0 & 1\\
  0 & 0
\end{pmatrix}
\in R$ because
$\begin{pmatrix}
  a+kc & b+kd\\
  0 & 0
\end{pmatrix},
\begin{pmatrix}
  1 & k\\
  0 & 0
\end{pmatrix}
\in R$.
We also have
$\begin{pmatrix}
  1 & 0\\
  0 & 1
\end{pmatrix},
\begin{pmatrix}
  a & b\\
  c & d
\end{pmatrix}
\in R$, then we can get
$\begin{pmatrix}
  1 & 0\\
  0 & 0
\end{pmatrix},
\begin{pmatrix}
  0 & 1\\
  0 & 0
\end{pmatrix}
\begin{pmatrix}
  0 & 0\\
  0 & 1
\end{pmatrix},
\begin{pmatrix}
  a & b\\
  c & d
\end{pmatrix}
\in R$.
So $R = Mat(2,\mathbb{C})$ because $c\neq 0$.

\end{proof}

\begin{prop}\label{prop3.1}
Let $\gamma: \pi_{1}(S)\rightarrow SL(2,\mathbb{C})$ be a representative of an element in the character variety $\mathcal{X}_{SL(2,\mathbb{C})}(S)$.
Then $\gamma$ is irreducible if ${\rm{Im}}\gamma$ contains a subgroup of $SL(2,\mathbb{C})$ freely generated by two elements.
In particular $\gamma$ is irreducible if $S$ has negative Euler characteristic and $\gamma$ is injective.
\end{prop}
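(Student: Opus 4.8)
\emph{Proof strategy.} The plan is to reduce the statement to Lemma~\ref{lmm3.7} via the standard dictionary between irreducibility of a two-dimensional representation and the size of the subalgebra of $Mat(2,\mathbb{C})$ it generates. Recall that $\gamma:\pi_1(S)\to SL(2,\mathbb{C})$ is \emph{irreducible} precisely when the only subspaces of $V=\mathbb{C}^2$ invariant under every $\gamma(g)$ are $\{0\}$ and $V$; equivalently, $\mathrm{Im}\,\gamma$ has no common eigenline. The first step I would record is the elementary observation that if the subalgebra $R\subseteq Mat(2,\mathbb{C})$ generated by $\mathrm{Im}\,\gamma$ equals $Mat(2,\mathbb{C})$, then $\gamma$ is irreducible: a common invariant line $L$ for $\mathrm{Im}\,\gamma$ would be invariant under the whole algebra they generate, hence under $Mat(2,\mathbb{C})$, but $Mat(2,\mathbb{C})\cdot L=V\neq L$.

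With this in hand, the first assertion is immediate. Suppose $\mathrm{Im}\,\gamma$ contains a subgroup $G\leq SL(2,\mathbb{C})$ freely generated by two elements. Let $R$ be the subalgebra of $Mat(2,\mathbb{C})$ generated by $\mathrm{Im}\,\gamma$ and let $R_0$ be the subalgebra generated by $G$; then $R_0\subseteq R$. By Lemma~\ref{lmm3.7}, $R_0=Mat(2,\mathbb{C})$, hence $R=Mat(2,\mathbb{C})$, and the observation above gives that $\gamma$ is irreducible.

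For the ``in particular'' clause I would show that $\chi(S)<0$ implies $\pi_1(S)$ contains a subgroup freely generated by two elements; since injectivity of $\gamma$ gives $\mathrm{Im}\,\gamma\cong\pi_1(S)$, the first part then applies. If $S$ has at least one puncture, $\pi_1(S)$ is free of rank $2g+p-1$, and $\chi(S)=2-2g-p<0$ forces $2g+p-1\geq 2$, so $\pi_1(S)$ is already free of rank at least $2$. If $S$ is closed, then $\chi(S)<0$ means genus $g\geq 2$, and sending $a_1\mapsto x$, $a_2\mapsto y$ with all other standard generators going to $1$ defines a surjection $\pi_1(S)\twoheadrightarrow F_2=\langle x,y\rangle$ (the defining relator $\prod_{i=1}^{g}[a_i,b_i]$ maps to the identity); since $F_2$ is free, this surjection splits, so a rank-two free group embeds in $\pi_1(S)$. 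Alternatively, one may invoke that a genus $\geq 2$ surface group is a non-elementary Fuchsian group and hence contains nonabelian free subgroups.

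I do not expect a genuine obstacle here: Lemma~\ref{lmm3.7} carries essentially all the weight, and the existence of rank-two free subgroups in surface groups of negative Euler characteristic is classical. The only points needing a little care are stating precisely the easy direction of the algebra/irreducibility correspondence (so that ``irreducible'' is used in the same common-eigenvector sense as elsewhere in the paper) and verifying the well-definedness and surjectivity of the homomorphism $\pi_1(S)\to F_2$ used in the closed case.
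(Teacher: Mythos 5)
Your proposal is correct and follows the same route as the paper, whose proof of this proposition consists solely of the citation ``Lemma \ref{lmm3.7}''; you have simply made explicit the two steps the paper leaves implicit (that generating all of $Mat(2,\mathbb{C})$ forces irreducibility, and that $\chi(S)<0$ guarantees a rank-two free subgroup of $\pi_1(S)$), and both of your verifications are sound.
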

\begin{proof}
Lemma \ref{lmm3.7}.

\end{proof}

\subsection{Statement of the  conjecture }\label{sub5.2}


To get the intertwiner, we first have to get a $\varphi$-invariant  smooth character $\gamma\in \mathcal{X}_{SL(2,\mathbb{C})}(S)$.
At page 371 of \cite{BM} it is proved
that every periodic
 diffeomorphism fixes a point in  the Teichm{\"u}ller space.
This means there is a discrete and faithful group homomorphism
$\bar{\gamma} :\pi_1(S) \rightarrow PSL(2,\mathbb{R})$ such that $\bar{\gamma}\varphi_{*}$ is conjugate to
$\bar{\gamma}$ by  an element in $PSL(2,\mathbb{R})$  where $\varphi_*$ is the
isomomorphism from $\pi_1(S)$ to $\pi_1(S)$ induced by $\varphi$.

Since $PSL(2,\mathbb{R})\subset PSL(2,\mathbb{C})$, we can regard $\bar{\gamma}\varphi_{*}$ and
$\bar{\gamma}$ as two elements in $\mathcal{X}_{PSL(2,\mathbb{C})}(S)$. Then
$\bar{\gamma}\varphi_{*}$ is conjugate to
$\bar{\gamma}$ by  an element in $PSL(2,\mathbb{C})$. Thus $\bar{\gamma}$ can be extended to
a group homomorphism from $\pi_1(M_{\varphi})$ to $PSL(2,\mathbb{C})$, we use  $\hat{\gamma}$ to denote
this homomorphism. Then we can lift $\hat{\gamma}$ to a group homomorphism $\tilde{\gamma}$ from $\pi_1(M_{\varphi})$ to $SL(2,\mathbb{C})$. 
The restriction of $\tilde{\gamma}$ to $\pi_1(S)$ is $\varphi$-invariant, and we use $\gamma$ to denote this group homomorphism.
Note that $\gamma$ is a group homomorphism from $\pi_1(S)$ to $SL(2,\mathbb{C})$. Let $\varepsilon$ be the projection from
$SL(2,\mathbb{C})$ be $PSL(2,\mathbb{C})$, then we have $\varepsilon\tilde{\gamma} = \hat{\gamma}$, furthermore we have
$$\varepsilon\gamma = \varepsilon\tilde{\gamma}|_{\pi_1(S)} = \hat{\gamma}|_{\pi_1(S)} = \bar{\gamma}.$$

Since $\bar{\gamma}$ is injective, we have $\gamma$ is injective. From Proposition \ref{prop3.1}, we know $\gamma$ is irreducible. Thus we get a
$\varphi$-invariant  smooth character $\gamma\in \mathcal{X}_{SL(2,\mathbb{C})}(S)$. From now on, we use $\gamma_{\varphi}$ to denote
$\gamma$ and $\overline{\gamma_{\varphi}}$ to denote $\overline{\gamma}$.

For every puncture $v$ in $S$, we know ${\rm{Trace}}\gamma_{\varphi}(\alpha_v) = \pm2$ where $\alpha_v$ is the loop  going around puncture $v$.
If ${\rm{Trace}}\gamma_{\varphi}(\alpha_v) = 2$, we choose  $p_v = -(q + q^{-1})$. Then 
$$T_n(p_v) = (-q)^n + (-q^{-1})^n = -1 -1 = -{\rm{Trace}}\gamma_{\varphi}(\alpha_v).$$
If ${\rm{Trace}}\gamma_{\varphi}(\alpha_v) = -2$, we choose $p_v = 1 + 1$. Then
$$T_n(p_v) = 1^n + 1^n = 1 + 1 = -{\rm{Trace}}\gamma_{\varphi}(\alpha_v).$$
Since ${\rm{Trace}}\gamma_{\varphi}(\alpha_v) = {\rm{Trace}}\gamma_{\varphi}(\varphi(\alpha_v)) = {\rm{Trace}}\gamma_{\varphi}(\alpha_{\varphi(v)})$, we have $p_v = p_{\varphi(v)}$.
So now we have everything we want. Then we obtain the Kauffman bracket intertwiner $\Lambda_{\varphi,\gamma_{\varphi}}^{q}$ associated to these data. We require
$|\det(\Lambda_{\varphi,\gamma_{\varphi}}^{q})| = 1$. With the fixed  $S$, $\varphi$,  $\gamma_{\varphi}$ and $\{p_v\}_{v}$, we have $|{\rm{Trace}} \Lambda^{q}_{\varphi,r}|$
is only related to $q$. 
\begin{conj}\label{conj4.2}
Suppose $S$ is an oriented surface with negative Euler characteristic, and
 $\varphi$ is a periodic diffeomorphism for $S$.  Let $\gamma_{\varphi}$  be the $\varphi$-invariant smooth character defined as in the second paragraph of this subsection.  For each puncture $v$, let $p_v$ be the complex number defined as in the third paragraph of this subsection. Let $q_n = e^{2\pi i/n}$ with $(q_n)^{1/2} = e^{\pi i/n}$. Then
\begin{equation*}
 \lim_{n\text{\;} odd \rightarrow \infty} \frac{1}{n} \log |{\rm{Trace}} \Lambda^{q_n}_{\varphi,r}| = 0.
 \end{equation*}
\end{conj}

\subsection{Proofs for the conjecture  for some special cases}
In the remaining part of this paper, we will present some results related to our conjecture. Especially, we will  give a proof  for our conjecture when the surface $S$ is the once punctured torus.

In the following Theorem, we use the periodic property of the diffeomorphisms to prove that the limit in Conjecture \ref{conj4.2} is less than or equal to zero if it exists.

\begin{thm}\label{th4.1}
If $\lim_{n\text{\;} odd \rightarrow \infty} \frac{1}{n} \log |{\rm{Trace}} \Lambda^{q_n}_{\varphi,r_{\varphi}}|$ exists, the limit is less than or equal to zero.
\end{thm}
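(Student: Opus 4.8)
The plan is to exploit the finite order of $\varphi$ to force every eigenvalue of the normalized intertwiner to have modulus $1$, after which the bound $|\mathrm{Trace}\,\Lambda^{q_n}_{\varphi,\gamma_\varphi}|\le \dim V$ and the polynomial growth of $\dim V$ in $n$ finish the argument.

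First I would record the algebraic consequence of periodicity. Let $k$ be the order of $\varphi$, so $\varphi^{k}$ is isotopic to $\mathrm{id}_S$; since $SK_{q^{1/2}}(S)$ depends only on the isotopy class of a diffeomorphism, this gives $(\varphi_{\sharp})^{k}=\mathrm{id}$ on $SK_{q^{1/2}}(S)$. Write $\Lambda:=\Lambda^{q_n}_{\varphi,\gamma_\varphi}$ and let $\rho$ be the irreducible representation with classical shadow $\gamma_\varphi$ and the chosen puncture weights. Iterating the intertwining identity $\rho\circ\varphi_{\sharp}(X)=\Lambda\,\rho(X)\,\Lambda^{-1}$ yields $\rho\circ\varphi_{\sharp}^{\,k}(X)=\Lambda^{k}\,\rho(X)\,\Lambda^{-k}$ for all $X\in SK_{q^{1/2}}(S)$. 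Because $\varphi_{\sharp}^{\,k}=\mathrm{id}$, the operator $\Lambda^{k}$ commutes with $\rho(X)$ for every $X$, and since $\rho$ is irreducible, Schur's lemma gives $\Lambda^{k}=\mu\,\mathrm{Id}_{V}$ for some scalar $\mu\in\mathbb{C}^{\times}$.

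Next I would use the normalization $|\det\Lambda|=1$. It forces $|\mu|^{\dim V}=|\det(\Lambda^{k})|=|\det\Lambda|^{k}=1$, hence $|\mu|=1$; thus every eigenvalue $\zeta$ of $\Lambda$ satisfies $\zeta^{k}=\mu$ and therefore $|\zeta|=|\mu|^{1/k}=1$. Consequently $|\mathrm{Trace}\,\Lambda|\le\dim V$. Since $\gamma_\varphi$ is irreducible, hence smooth, the unicity theorem gives $\dim V=n^{\,3g+p-3}$ when $S$ has genus $g$ and $p$ punctures, so
\[
\frac{1}{n}\log\bigl|\mathrm{Trace}\,\Lambda^{q_n}_{\varphi,\gamma_\varphi}\bigr|\;\le\;\frac{(3g+p-3)\log n}{n},
\]
and the right-hand side tends to $0$ as $n\to\infty$. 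Therefore, whenever the limit exists it is $\le 0$.

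I expect no serious obstacle in this direction; the only points that need a word of care are (i) that $\varphi^{k}$ acts trivially on the skein algebra, which holds because the skein algebra is an isotopy invariant, and (ii) that the residual scalar ambiguity in the choice of $\Lambda$ does not affect the conclusion, since $\Lambda^{k}$ being scalar together with $|\det\Lambda|=1$ pins down $|\zeta|=1$ regardless of which normalized representative is taken. The genuinely hard part — the reverse inequality, i.e.\ that the limit is $\ge 0$ (and in particular that $\mathrm{Trace}\,\Lambda^{q_n}_{\varphi,\gamma_\varphi}$ does not vanish or decay too often along odd $n$) — is precisely the open question the paper connects to the work of Myerson and Tao, and is not touched by this argument.
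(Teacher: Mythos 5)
Your argument is correct and is essentially the same as the paper's: both iterate the intertwining relation using $\varphi^k=\mathrm{id}$, apply Schur's lemma to get $\Lambda^k=\mu\,\mathrm{Id}$, use the determinant normalization to force all eigenvalues onto the unit circle, and bound the trace by the dimension, whose logarithm is $o(n)$. If anything, your version is slightly more careful than the paper's, which writes the bound as $|\mathrm{Trace}\,\Lambda|\le n$ rather than $\le\dim V=n^{3g+p-3}$; this does not affect the conclusion.
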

\begin{proof}
Let $\rho: SK_{q_n^{1/2}}(S)\rightarrow {\rm{End}}(V)$ be an irreducible representation of the skein algebra associated to $\gamma_{\varphi}$ and weight system $\{p_v\}_{v}$.
From the definition of  intertwiners $\Lambda^{q_n}_{\varphi,r_{\varphi}}$, we know $$\rho\varphi_{\sharp}(X) = \Lambda^{q_n}_{\varphi,r_{\varphi}} \circ \rho(X) \circ ( \Lambda^{q_n}_{\varphi,r_{\varphi}})^{-1} $$ for all $X\in SK_{q_n^{1/2}}(S)$. We have
$$\rho(\varphi^2)_{\sharp}(X)= \rho\varphi_{\sharp}(\varphi_{\sharp}(X))  =
 \Lambda^{q_n}_{\varphi,r_{\varphi}} \circ \rho\varphi_{\sharp}(X) \circ ( \Lambda^{q_n}_{\varphi,r_{\varphi}})^{-1} =
 (\Lambda^{q_n}_{\varphi,r_{\varphi}})^2 \circ \rho\varphi_{\sharp}(X) \circ ( \Lambda^{q_n}_{\varphi,r_{\varphi}})^{-2} .$$
 Then it is easy to show that, with any integer $j$, we have
 $$\rho(\varphi^j)_{\sharp}(X)=
 (\Lambda^{q_n}_{\varphi,r_{\varphi}})^j \circ \rho\varphi_{\sharp}(X) \circ ( \Lambda^{q_n}_{\varphi,r_{\varphi}})^{-j} .$$

 Since $\varphi$ is periodic, there exists a positive integer $k$ such that $\varphi^k = Id_{S}$.
 Then we have
$$\rho(X)=\rho(\varphi^k)_{\sharp}(X) = (\Lambda^{q_n}_{\varphi,r_{\varphi}})^k \circ \rho(X) \circ ( \Lambda^{q_n}_{\varphi,r_{\varphi}})^{-k} $$
for all $X\in SK_{q^{1/2}}(S)$. We must have $(\Lambda^{q_n}_{\varphi,r_{\varphi}})^k = \lambda I$ because $\rho$ is irreducible, where $I$ is the identity matrix and $\lambda$
is a nonzero complex number. But we require $|\det(\Lambda^{q_n}_{\varphi,r_{\varphi}})| = 1$, thus
$|\lambda| = 1$. Actually we can always choose a good $\Lambda^{q_n}_{\varphi,r_{\varphi}}$ such that $(\Lambda^{q_n}_{\varphi,r_{\varphi}})^k = I$.
Since $x^k - 1$ has no multiple roots, then $\Lambda^{q_n}_{\varphi,r_{\varphi}}$ is always diagonalizable.  All its eigenvalues are $k$-roots of unity.
Then
$${\rm{Trace}}\Lambda^{q_n}_{\varphi,r_{\varphi}} = \sum_{0\leq i\leq n-1}\lambda_i,$$
where $\lambda_i^{k} = 1$ for all $0\leq i\leq n-1$.

We have $|{\rm{Trace}}\Lambda^{q_n}_{\varphi,r_{\varphi}}|\leq n$. So if the limit  exists, we have the limit is less than or equal to zero.
\end{proof}

From the proof of Theorem \ref{th4.1}, we know $|{\rm{Trace}}\Lambda^{q_n}_{\varphi,r_{\varphi}}|$ is simply the absolute value of the sum of roots of unity.
We are only concerned with  how small  $|{\rm{Trace}}\Lambda^{q_n}_{\varphi,r_{\varphi}}|$ can be because of Theorem \ref{th4.1}. Actually this problem was already asked by
Myerson \cite{GM} and Tao \cite{TT}. For any two positive integers $k,n$, let $f(n,k)$ be the least absolute value of a nonzero sum of $n$ (not necessarily distinct) $k$-th roots of unity. Myerson gave the lower bound for all positive integers $k,n$
\begin{equation}\label{eq5.4}
f(n,k)\geq n^{-k}.
\end{equation}
According to \cite{TK}, we know ${\rm{Trace}} \Lambda^{q_n}_{\varphi,r}\neq 0$ if the order of $\varphi$ is $2^m$ for some positive integer $m$.

\begin{thm}\label{thm5.16}
If $\varphi$ is of order $2^m$ where $m$ is any positive integer, for any surface with negative Euler characteristic, we have
$$ \lim_{n\text{\;} odd \rightarrow \infty} \frac{1}{n} \log |{\rm{Trace}} \Lambda^{q_n}_{\varphi,r_{\varphi}}| = 0.$$
\end{thm}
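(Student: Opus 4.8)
The plan is to combine the structural description of $\Lambda^{q_n}_{\varphi,\gamma_\varphi}$ extracted in the proof of Theorem~\ref{th4.1} with Myerson's estimate~\eqref{eq5.4}. Since $\varphi$ has finite order $2^m$, that proof shows one may normalise the intertwiner so that $(\Lambda^{q_n}_{\varphi,\gamma_\varphi})^{2^m}=I$; in particular $\Lambda^{q_n}_{\varphi,\gamma_\varphi}$ is diagonalisable over $\mathbb{C}$ with every eigenvalue a $2^m$-th root of unity. Writing $N=\dim V=n^{3g+p-3}$ for the dimension of the irreducible representation attached to $\gamma_\varphi$ and the weights $p_v$ (with $g$ the genus and $p$ the number of punctures of $S$, by the unicity theorem), it follows that ${\rm{Trace}}\,\Lambda^{q_n}_{\varphi,\gamma_\varphi}$ is a sum of $N$ (not necessarily distinct) $2^m$-th roots of unity. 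Any two intertwiners with $|\det|=1$ differ by a scalar of modulus one, so $|{\rm{Trace}}\,\Lambda^{q_n}_{\varphi,\gamma_\varphi}|$ does not depend on the chosen normalisation, and I would work with $(\Lambda^{q_n}_{\varphi,\gamma_\varphi})^{2^m}=I$ throughout.

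For the upper estimate the triangle inequality gives $|{\rm{Trace}}\,\Lambda^{q_n}_{\varphi,\gamma_\varphi}|\le N=n^{3g+p-3}$, hence
\begin{equation*}
\frac1n\log\bigl|{\rm{Trace}}\,\Lambda^{q_n}_{\varphi,\gamma_\varphi}\bigr|\ \le\ \frac{(3g+p-3)\log n}{n}\ \longrightarrow\ 0\qquad(n\to\infty),
\end{equation*}
so $\limsup_{\text{odd }n\to\infty}\frac1n\log|{\rm{Trace}}\,\Lambda^{q_n}_{\varphi,\gamma_\varphi}|\le 0$ (which also follows from Theorem~\ref{th4.1}).

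For the lower estimate, I would first invoke the non-vanishing ${\rm{Trace}}\,\Lambda^{q_n}_{\varphi,\gamma_\varphi}\neq 0$: this holds by \cite{TK} because the order of $\varphi$ is a power of $2$ (concretely, a nonzero sum of an \emph{odd} number of $2^m$-th roots of unity cannot vanish, and $N=n^{3g+p-3}$ is odd). Then Myerson's bound~\eqref{eq5.4}, applied with $N$ terms and $k=2^m$, yields
\begin{equation*}
\bigl|{\rm{Trace}}\,\Lambda^{q_n}_{\varphi,\gamma_\varphi}\bigr|\ \ge\ f(N,2^m)\ \ge\ N^{-2^m}\ =\ n^{-2^m(3g+p-3)},
\end{equation*}
and therefore
\begin{equation*}
\frac1n\log\bigl|{\rm{Trace}}\,\Lambda^{q_n}_{\varphi,\gamma_\varphi}\bigr|\ \ge\ \frac{-2^m(3g+p-3)\log n}{n}\ \longrightarrow\ 0\qquad(n\to\infty),
\end{equation*}
so $\liminf_{\text{odd }n\to\infty}\frac1n\log|{\rm{Trace}}\,\Lambda^{q_n}_{\varphi,\gamma_\varphi}|\ge 0$. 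Combining the two one-sided bounds, the limit exists and equals $0$.

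The only delicate ingredient is the non-vanishing of the trace — without it the left-hand side would be $-\infty$ — and this is precisely where the hypothesis that the order of $\varphi$ be a power of $2$ enters, through \cite{TK}. Once that is in hand the remaining steps are elementary: the eigenvalues are roots of unity of order bounded independently of $n$, $\dim V$ grows only polynomially in $n$, and the squeeze argument closes the proof. I therefore do not anticipate any further obstacle.
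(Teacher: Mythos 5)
Your proof is correct and follows essentially the same route as the paper: non-vanishing of the trace via Lam--Leung \cite{TK}, Myerson's lower bound \eqref{eq5.4}, and a squeeze between the two polynomial-in-$n$ bounds. The only difference is that you correctly track the dimension $N=n^{3g+p-3}$ of the representation, where the paper's proof (and its Theorem \ref{th4.1}) implicitly uses $n$ summands; since $\log N$ is still $O(\log n)$, this changes nothing in the limit, and your version is in fact the more careful one.
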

\begin{proof}
Since for any odd $n$, we have ${\rm{Trace}} \Lambda^{q_n}_{\varphi,r}\neq 0$. Then
$$n^{-k}\leq f(n,k)\leq|{\rm{Trace}} \Lambda^{q_n}_{\varphi,r}|,$$
where $k=2^m$.
So we get
$$\frac{1}{n} \log n^{-k} \leq\frac{1}{n} \log |{\rm{Trace}} \Lambda^{q_n}_{\varphi,r_{\varphi}}|\leq \frac{1}{n} \log n.$$
Then
$\lim_{n\text{\;} odd \rightarrow \infty} \frac{1}{n} \log |{\rm{Trace}} \Lambda^{q_n}_{\varphi,r_{\varphi}}| = 0$.
\end{proof}

\begin{prop}
If $\varphi$ is of order $p^m$ where $p$ is any positive prime number and $m$ is any positive integer, for any surface with negative Euler characteristic, we have
$$ \limsup_{n\text{\;} odd \rightarrow \infty} \frac{1}{n} \log |{\rm{Trace}} \Lambda^{q_n}_{\varphi,r_{\varphi}}| = 0.$$
\end{prop}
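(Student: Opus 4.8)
The plan is to establish the two inequalities $\limsup\le 0$ and $\limsup\ge 0$ separately: the upper bound is essentially the content of Theorem~\ref{th4.1}, while the lower bound rests on the structure theorem for vanishing sums of prime-power roots of unity (the general form of the fact already used from \cite{TK} in Theorem~\ref{thm5.16}) combined with Myerson's lower bound (\ref{eq5.4}).

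First, abbreviate $\Lambda_n:=\Lambda^{q_n}_{\varphi,r_\varphi}$. As in the proof of Theorem~\ref{th4.1}, choose $\Lambda_n$ with $|\det\Lambda_n|=1$ so that $\Lambda_n^{p^m}=I$; then $\Lambda_n$ is diagonalizable and every eigenvalue is a $p^m$-th root of unity. Let $D_n=\dim V$ be the dimension of the irreducible representation of $SK_{q_n^{1/2}}(S)$ attached to $\gamma_\varphi$; by the unicity theorem $D_n=n^{d}$ for an integer $d\ge 0$ depending only on $S$ (and if $d=0$ then $D_n=1$, ${\rm Trace}\,\Lambda_n$ is a single root of unity, and there is nothing to prove). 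Hence ${\rm Trace}\,\Lambda_n=\sum_{i=1}^{D_n}\lambda_i$ with $\lambda_i^{p^m}=1$ for each $i$, so $|{\rm Trace}\,\Lambda_n|\le D_n=n^{d}$ and $\frac{\log|{\rm Trace}\,\Lambda_n|}{n}\le\frac{d\log n}{n}\to 0$; taking $\limsup$ gives $\limsup_{\text{odd }n\to\infty}\frac{\log(|{\rm Trace}\,\Lambda_n|)}{n}\le 0$ (the terms with ${\rm Trace}\,\Lambda_n=0$ equal $-\infty$ and do not affect the $\limsup$).

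For the reverse inequality, I would use the theorem that a nonempty vanishing sum of $N$-th roots of unity with $N=p^m$ a prime power must have a number of summands divisible by $p$ (\cite{TK}; this is exactly the mechanism that forced nonvanishing when ${\rm ord}(\varphi)=2^m$). Therefore $p\nmid D_n$ implies ${\rm Trace}\,\Lambda_n\ne 0$, and since $D_n=n^{d}$ with $d\ge 1$ in the nontrivial case, $p\nmid D_n$ is equivalent to $p\nmid n$. Pick an infinite subsequence $\{n_j\}\subset 2\mathbb{Z}_{\ge 0}+1$ with $p\nmid n_j$: for $p=2$ every odd integer works, and for odd $p$ take $n_j\equiv 1\pmod{2p}$. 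For such $n_j$, ${\rm Trace}\,\Lambda_{n_j}$ is a nonzero sum of $D_{n_j}$ roots of unity of order $p^m$, so Myerson's bound (\ref{eq5.4}) gives
\begin{equation*}
|{\rm Trace}\,\Lambda_{n_j}|\ \ge\ f(D_{n_j},p^m)\ \ge\ D_{n_j}^{-p^m}\ =\ n_j^{-d\,p^m}.
\end{equation*}
Thus $\frac{\log(|{\rm Trace}\,\Lambda_{n_j}|)}{n_j}\ge -\frac{d\,p^m\log n_j}{n_j}\to 0$, whence $\limsup_{\text{odd }n\to\infty}\frac{\log(|{\rm Trace}\,\Lambda_n|)}{n}\ge 0$. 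Combining the two bounds yields the claimed equality.

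The only essential use of the prime-power hypothesis is the divisibility constraint of \cite{TK}, which is what guarantees ${\rm Trace}\,\Lambda_n\ne 0$ along an infinite family of odd $n$; after that the conclusion follows formally from the two polynomial estimates $D_n^{-p^m}\le|{\rm Trace}\,\Lambda_n|\le D_n$. I expect the only real care to be bookkeeping — identifying the number of summands in ${\rm Trace}\,\Lambda_n$ with $D_n=\dim V$ and checking $p\mid D_n\iff p\mid n$ — and, as the use of $\limsup$ rather than $\lim$ already signals, for odd $p$ and suitable multiples $n$ of $p$ the trace can genuinely vanish, so a true limit need not exist.
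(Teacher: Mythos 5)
Your proposal is correct and follows essentially the same route as the paper, whose proof of this proposition is just the one-line remark that it is ``similar with Theorem \ref{thm5.16}'': the upper bound from $|{\rm Trace}\,\Lambda_n|\le \dim V$, nonvanishing via the Lam--Leung divisibility constraint for sums of $p^m$-th roots of unity, and Myerson's bound $f(N,p^m)\ge N^{-p^m}$ to get a polynomially small lower bound. If anything, your write-up is more careful than the paper's sketch --- you correctly apply Myerson's bound to $\dim V=n^{d}$ summands rather than $n$, and you make explicit the one genuinely new point in passing from $p=2$ to odd $p$, namely that nonvanishing is only guaranteed along the subsequence $p\nmid n$, which is exactly why the statement is a $\limsup$ rather than a limit.
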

\begin{proof}
The proof is similar with Theorem \ref{thm5.16}.
\end{proof}

For the following discussion, we will use some notations and terminologies  in \cite{BW5}.
Suppose the surface $S$ has at least one puncture, that is, it has ideal triangulations. Let $\tau$ be an ideal triangulation of $S$, and let $\varphi$
 be any periodic map of $S$. Suppose $\tau = \tau^{(0)}, \tau^{(1)},\dots,\tau^{(k)} = \varphi(\tau)$ is an ideal triangulation sweep.
 Since $\varphi$ fixes a point in the Teichm{\"u}ller space, there exists a periodic edge weight system
 $a = a^{(0)}, a^{(1)},\dots,a^{(k)} = a\in (\mathbb{R}_{>0})^{e}$ where $a$ is the shear parameter corresponding to this fixed point in the Teichm{\"u}ller space.
Then $[\overline{\gamma_{\varphi}}]\in \mathcal{X}_{PSL(2,\mathbb{C})}(S)$ is the character associated to the weight system $a$. From the above discussion, we know
 $[\overline{\gamma_{\varphi}}]$ can be lift to a smooth $\varphi$-invariant character $[\gamma_{\varphi}]$ in $\mathcal{X}_{SL(2,\mathbb{C})}(S)$.

 We also have $a_{i_1}a_{i_2}\dots a_{i_j} = 1$, where $e_{i_1}, e_{i_2},\dots, e_{i_j}$
 are all the edges connecting to a common vertex, because $a$ corresponds to a complete hyperbolic structure.
 If ${\rm{Trace}}\gamma_{\varphi}(\alpha_v) = 2$, set  $h_v = q^2$. Then $h_v^{n} = 1$ and $p_v^{2} = h_v + h_v^{-1} + 2$. 
If ${\rm{Trace}}\gamma_{\varphi}(\alpha_v) = -2$, set $h_v = 1$. Then
$h_v^{n} = 1$ and $p_v^{2} = h_v + h_v^{-1} + 2$. Obviously $h_v = h_{\varphi(v)}$ for any puncture $v$.
Proposition 15 in \cite{BW5} implies that we can obtain an interwiner $\overline{\Lambda}^{q}_{\varphi,\overline{r_{\varphi}}}$
with $|\det(\overline{\Lambda}^{q}_{\varphi,\overline{r_{\varphi}}})| = 1$. According to Theorem 16 in \cite{BW5}, we have $|{\rm{Trace}}\overline{\Lambda}^{q}_{\varphi,\overline{r_{\varphi}}}| = |{\rm{Trace}} \Lambda^{q}_{\varphi,r_{\varphi}}|$.

For the once punctured torus $S_{1,1}$, we only have one puncture $v$. Let $\alpha = K_2, \beta = K_1$ denote two elements in $\pi_1(S_{1,1})$, see Figure \ref{fg3.2}. It is well-known that
$\alpha,\beta$ freely generate $\pi_1(S_{1,1})$. Let $\alpha_v$ be the loop around $v$. Then $\alpha_v = \beta\alpha\beta^{-1}\alpha^{-1}$.
From Lemma \ref{lmm4.12},
we have
$${\rm{Trace}} \gamma_{\varphi}(\alpha_v) = {\rm{Trace}} \gamma_{\varphi}(\beta\alpha\beta^{-1}\alpha^{-1}) =
{\rm{Trace}} \gamma_{\varphi}(\beta) \gamma_{\varphi}(\alpha) \gamma_{\varphi}(\beta)^{-1} \gamma_{\varphi}(\alpha)^{-1}\neq 2$$
because $\gamma_{\varphi}$ is injective. Thus we must have ${\rm{Trace}} \gamma_{\varphi}(\alpha_v) = -2$, which means  $h_v = 1$.

\begin{lmm}\label{lmm4.15}
Let the surface be $S_{1,1}$, then Conjecture \ref{conj4.2} holds for $\varphi$ being $\begin{pmatrix}
  0 & 1\\
  -1 & -1
\end{pmatrix}$ \\or
$\begin{pmatrix}
  1 & 1\\
  -1 & 0
\end{pmatrix}$.
\end{lmm}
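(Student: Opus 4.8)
The plan is to compute the intertwiner $\Lambda^{q_n}_{\varphi,r_\varphi}$ essentially explicitly in the Chekhov--Fock model of $SK_{q^{1/2}}(S_{1,1})$ set up in Section~3 and to recognise its trace as a quadratic Gauss sum, which the Gauss sum lemma evaluates. Both matrices are periodic: $\bigl(\begin{smallmatrix}0&1\\-1&-1\end{smallmatrix}\bigr)=:A$ has order $3$ and $\bigl(\begin{smallmatrix}1&1\\-1&0\end{smallmatrix}\bigr)=:B$ has order $6$, with $B^2=A$; each fixes the unique ideal triangulation $\tau$ with three edges whose equianharmonic shear system is $a_1=a_2=a_3=1$, and rotates $\tau$ cyclically permuting the edges $e_1,e_2,e_3$ (for $B$ one composes with the hyperelliptic involution, which fixes each of $K_1,K_2,K_3$, so that $B_\sharp=A_\sharp^{2}$ on $SK_{q^{1/2}}(S_{1,1})$). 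Hence $\Phi^{q}_{\tau\varphi(\tau)}\circ\Psi^{q}_{\varphi(\tau)\tau}$ is, up to explicit powers of $q$, the cyclic permutation automorphism $\sigma:Y_1\mapsto Y_2\mapsto Y_3\mapsto Y_1$ of the algebra $\mathbb{C}_{q^4}[Y_1^{\pm1},Y_2^{\pm1},Y_3^{\pm1}]$ for which $\overline\rho$ was built. Since $h_v=1$ for $S_{1,1}$ (recorded just above), the relevant irreducible representation is $\overline\rho=\rho_{y,y,y}$ from Lemma~\ref{lmm2.9}.

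First I would solve the intertwining equations $\overline\Lambda\,\overline\rho(Y_i)=\overline\rho(\sigma(Y_i))\,\overline\Lambda$ for $i=1,2,3$ by a short recursion on the matrix entries, exactly as in Lemma~\ref{lmm4.9}; this gives $(\overline\Lambda)_{k,t}=c\,q^{\,\alpha(k,t)}$ for an explicit quadratic form $\alpha$, whose off-diagonal part is a diagonal unimodular matrix times the $n\times n$ matrix $(q^{-4kt})$, a discrete Fourier matrix since $\gcd(4,n)=1$. Thus $|\det\overline\Lambda|=|c|^{n}n^{n/2}$, so the normalisation $|\det\overline\Lambda|=1$ forces $|c|=n^{-1/2}$. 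The key point is that in the $Y_i$-model the linear terms of $\alpha(t,t)$ cancel, leaving $\alpha(t,t)=-6t^{2}$ (the sign depends only on the orientation of the $3$-cycle); hence $\mathrm{Trace}\,\overline\Lambda=c\sum_{t=0}^{n-1}q_n^{\,-6t^{2}}$, and the Gauss sum lemma gives $|\mathrm{Trace}\,\overline\Lambda|=|c|\sqrt{(6,n)\,n}=\sqrt{(3,n)}\in\{1,\sqrt3\}$ because $n$ is odd. By Proposition~15 and Theorem~16 of \cite{BW5} (recalled above), $|\mathrm{Trace}\,\Lambda^{q_n}_{\varphi,r_\varphi}|=|\mathrm{Trace}\,\overline\Lambda^{q_n}_{\varphi,\overline{r_\varphi}}|=\sqrt{(3,n)}$, so $\tfrac1n\log|\mathrm{Trace}\,\Lambda^{q_n}_{\varphi,r_\varphi}|=\tfrac1n\log\sqrt{(3,n)}\to0$. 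For $B$ one uses $B_\sharp=A_\sharp^{2}$, so $\Lambda^{q_n}_{B,r_B}=(\Lambda^{q_n}_{A,r_A})^{2}$ up to a unit scalar; as $\Lambda^{q_n}_{A,r_A}$ is diagonalisable with cube-root-of-unity eigenvalues (hence conjugate to a unitary), $|\mathrm{Trace}(\Lambda^{q_n}_{A,r_A})^{2}|=|\overline{\mathrm{Trace}\,\Lambda^{q_n}_{A,r_A}}|=\sqrt{(3,n)}$, giving the same conclusion.

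The main obstacle is the second step: pinning down $\Phi^{q}_{\tau\varphi(\tau)}\circ\Psi^{q}_{\varphi(\tau)\tau}$ with the correct bookkeeping of the $q^{1/4}$-powers (and of the hyperelliptic twist in the order-$6$ case), and then verifying that the resulting quadratic Gauss sum is genuinely \emph{linear-term-free}. This is precisely what keeps $\mathrm{Trace}\,\overline\Lambda$ from vanishing when $3\mid n$, and it is where the symmetry of the $\rho_{y,y,y}$ model is essential (the $X_i$-presentation of $SK_{q^{1/2}}(S_{1,1})$ is not cyclically symmetric, and a naive computation there does produce a vanishing sum on multiples of $3$). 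Once the trace is identified with $c\sum_{t}q_n^{-6t^{2}}$, the evaluation and the passage to the limit are immediate from the Gauss sum lemma, with the upper bound $0$ already guaranteed by Theorem~\ref{th4.1}.
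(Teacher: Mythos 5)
Your proposal is correct and follows essentially the same route as the paper: pass to the Chekhov--Fock algebra $\mathbb{C}_{q^4}[Y_1^{\pm1},Y_2^{\pm1},Y_3^{\pm1}]$ with the equianharmonic shear system $a_1=a_2=a_3=1$, realise $\Phi^{q}_{\tau\varphi(\tau)}\circ\Psi^{q}_{\varphi(\tau)\tau}$ as the cyclic permutation of the $Y_i$, solve for the intertwiner entrywise, and observe that the diagonal is the linear-term-free Gauss sum $\sum_t q_n^{\pm 6t^2}$, giving $|\mathrm{Trace}\,\Lambda^{q_n}_{\varphi,r_\varphi}|=\sqrt{(6,n)}=\sqrt{(3,n)}$ after the $n^{-1/2}$ normalisation. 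Your handling of the order-$6$ matrix via $B_\sharp=A_\sharp^{2}$ (using $B^3=-I$ acting trivially on skeins) is a slightly cleaner shortcut than the paper's "the other one is similar," but it is only a cosmetic difference.
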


\begin{proof}
We only prove the case when $\varphi = \begin{pmatrix}
  0 & 1\\
  -1 & -1
\end{pmatrix}$ (the proof for the other one is similar with this one).
Let $\tau$ be the ideal triangulation in Figure \ref{fgggg3}. Then $\varphi(\tau)$ is the following ideal triangulation in Figure \ref{fg55}.
\begin{figure}[htbp]
  \centering
  \includegraphics[scale=0.5]{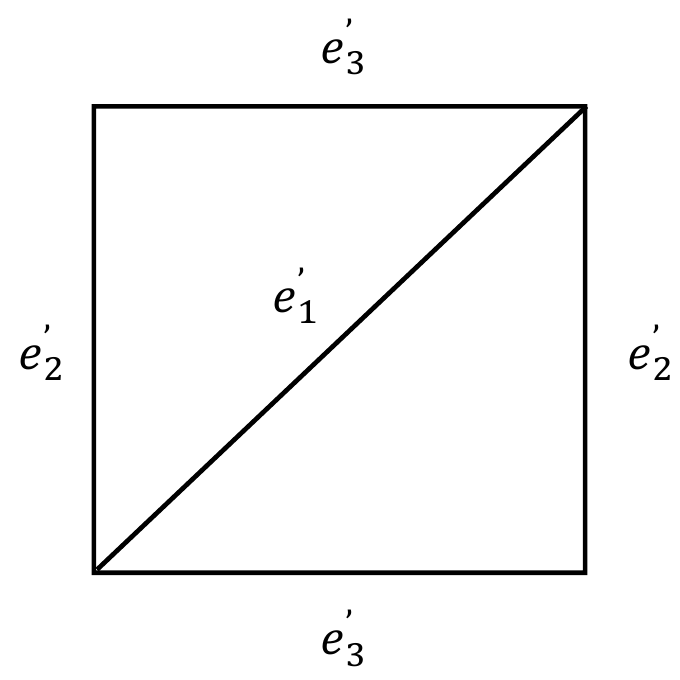}
\caption{}\label{fg55}
\end{figure}

Thus from $\tau$ to $\varphi(\tau)$ is relabeling. Suppose the shear parameter for $\tau$ is $a^{\tau} = (a_1, a_2, a_3)$, then
 the shear parameter for $\varphi(\tau)$ is $a^{\varphi(\tau)} = (a_3, a_1, a_2)$. From $a^{\tau} = a^{\varphi(\tau)}$, we get
 $a_1 = a_2 = a_3$. Since we also have $a_1^{2}a_2^{2}a_3^{2} = 1$ and $a_i\in\mathbb{R}_{>0}$, we have $a_1 = a_2 = a_3 = 1$.

  Recall that the Chekhov-Fock algebra associated to the ideal triangulation $\tau$ is
 $\mathbb{C}_{q^4}[Y_1^{\pm1}, Y_2^{\pm1}, Y_3^{\pm1}]$, where $Y_i$ corresponds to edge $e_i$ for $i=1,2,3$.
 The algebra $\mathbb{C}_{q^4}[Y_1^{\pm1}, Y_2^{\pm1}, Y_3^{\pm1}]$ is generated by $Y_1,Y_2,Y_3$ and subject to relations:
 $$Y_1Y_2 = q^4Y_2Y_1, Y_2Y_3 = q^4 Y_3Y_2, Y_3Y_1 = q^4Y_1Y_3, Y_iY_i^{-1} = Y_i^{-1}Y_i = 1.$$

 Define the irreducible representation $\rho$ of $\mathbb{C}_{q^4}[Y_1^{\pm1}, Y_2^{\pm1}, Y_3^{\pm1}]$
 as $\rho_{1,1,1}$ in Lemma \ref{lmm2.9}, that is, set $y_1 = y_2 = y_3 = 1$ in  equation \ref{eq2.17}.
 Then
 $$\rho(Y_1^{n}) = Id_V = a_1Id_V, \rho(Y_2^{n}) = Id_V = a_2Id_V, \rho(Y_3^{n}) = Id_V = a_3Id_V$$
 and
 $$\rho(H_v) = \rho([Y_1^{2}Y_2^{2}Y_3^{2}]) = Id_V = h_v Id_V.$$

It is easy to calculate that $\Phi_{\tau\varphi(\tau)}^{q_n}\Psi_{\varphi(\tau)\tau}^{q_n}$ is actually an isomorphism from $\mathbb{C}_{q^4}[Y_1^{\pm1}, Y_2^{\pm1}, Y_3^{\pm1}]$
to itself and
$$\Phi_{\tau\varphi(\tau)}^{q_n}\Psi_{\varphi(\tau)\tau}^{q_n}(Y_1) = Y_3,
\Phi_{\tau\varphi(\tau)}^{q_n}\Psi_{\varphi(\tau)\tau}^{q_n}(Y_2) = Y_1,
\Phi_{\tau\varphi(\tau)}^{q_n}\Psi_{\varphi(\tau)\tau}^{q_n}(Y_3) = Y_2.$$
We use $\rho^{'}$ to denote the irreducible representation $\rho\Phi_{\tau\varphi(\tau)}^{q_n}\Psi_{\varphi(\tau)\tau}$. Then $\rho$ is isomorphic to
$\rho^{'}$.

For each $0\leq k\leq n-1$, set $$v_k = \sum_{0\leq i\leq n-1}q_n^{k^2+i^2 + 4ik + i-k}w_i.$$ Then we have
$$
\rho^{'}(Y_1)(v_k) = q_n^{4k}v_k,
\rho^{'}(Y_2)(v_k) =  q_n^{-2k}v_{k+1},
\rho^{'}(Y_3)(v_k) = q_n^{-2k}v_{k-1}.
$$
Define invertible operator $\Lambda$ for $V$ such that $\Lambda(w_k) = v_k, \forall 0\leq k\leq n-1$.
Then, for all $0\leq k\leq n-1$, we have
$$\rho^{'}(Y_1)(\Lambda(w_k)) = \rho^{'}(Y_1)(v_k) = q_n^{4k}v_k = \Lambda(q_n^{4k}w_k) = \Lambda(\rho(Y_1)w_k).$$
Thus we get $\rho^{'}(Y_1) = \Lambda \circ\rho(Y_1)\circ \Lambda^{-1}$.
Similarly we can show $\rho^{'}(Y_2) = \Lambda \circ\rho(Y_2)\circ \Lambda^{-1}$ and $\rho^{'}(Y_3) = \Lambda \circ\rho(Y_3)\circ \Lambda^{-1}$. Thus $\Lambda$
is the intertwiner.
As a matrix, we have $\Lambda_{i,k} = q_n^{k^2+i^2 + 4ik + i-k}$.

From pure calculate, we get $|\det(\Lambda)| = n^{\frac{n}{2}}$. Thus we can set
$\overline{\Lambda}^{q_n}_{\varphi,\overline{r_{\varphi}}} = n^{-\frac{1}{2}}\Lambda$.
Then $$|{\rm{Trace}} \Lambda^{q_n}_{\varphi,r_{\varphi}}|
=|{\rm{Trace}}\overline{\Lambda}^{q_n}_{\varphi,\overline{r_{\varphi}}}| = n^{-\frac{1}{2}} \sum_{0\leq i\leq n-1} q_n^{6i^2} = n^{-\frac{1}{2}}\sqrt{(6,n)n} = \sqrt{(6,n)}.$$
Obviously we get $$ \lim_{n\text{\;} odd \rightarrow \infty} \frac{1}{n} \log |{\rm{Trace}} \Lambda^{q_n}_{\varphi,r_{\varphi}}| = 0.$$
\end{proof}

\begin{rem}
In the proof of Lemma \ref{lmm4.15}, when we try to find the periodic edge weight system for the triangulation sweep $\tau,\varphi(\tau)$, we require
$a_i\in \mathbb{R}_{>0}$ because we want to get the fixed character corresponding to a point in the Teichm{\"u}ller space. Actually we still get the same
intertwiner $\Lambda$ as in Lemma \ref{lmm4.15} without requiring $a_i\in \mathbb{R}_{>0}$, that is, for any periodic edge weight system, the intertwiner we get is
$\Lambda$. This means Lemma  \ref{lmm4.15} still holds when we choose any other $\varphi$-invariant smooth character (without the restriction for only choosing the one corresponding to a fixed point in  the Teichm{\"u}ller space). Readers can check the same arguments hold for Theorems \ref{thm5.16} and \ref{thm5.14}.
\end{rem}

Let $\phi$ be a pseudo-Anosov  map for $S$, and let $f$ be any  diffeomorphism for surface $S$. Then $f\phi f^{-1}$ is also a pseudo-Anosov  map.
Then we have the following conclusion:
\begin{lmm}\label{lmm4.16}
Let $\phi$ be any  pseudo-Anosov  map for $S$, and let $f$ be any  diffeomorphism for $S$. If Conjecture \ref{conj4.1} holds for $\phi$, then
it also holds for $f\phi f^{-1}$.
\end{lmm}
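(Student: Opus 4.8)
The plan is to transport the whole construction along the conjugating diffeomorphism $f$, writing $\varphi=f\phi f^{-1}$. First I would record the topological input: the map $S\times[0,1]\to S\times[0,1]$, $(x,t)\mapsto(f(x),t)$, descends to a homeomorphism $M_\phi\to M_{\varphi}$, because on $M_\phi$ the point $(x,1)$ is glued to $(\phi(x),0)$, whose image $(f\phi(x),0)$ is exactly what $(f(x),1)$ is glued to in $M_{\varphi}$. Since hyperbolic volume is a topological invariant (Mostow rigidity), $vol_{hyp}(M_\phi)=vol_{hyp}(M_{\varphi})$, and it remains only to identify the asymptotics of the intertwiner traces for $\varphi$ with those for $\phi$.

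Second, I would pass from $\varphi$-data to $\phi$-data. Given a $\varphi$-invariant smooth character $[\gamma]$ and $\varphi$-invariant puncture weights $p_v$ used to build $\Lambda^{q_n}_{\varphi,\gamma}$, set $[\gamma']:=f^*([\gamma])=[\gamma\circ f_*]$. Using $\varphi_*=f_*\circ\phi_*\circ f_*^{-1}$ one checks $\phi^*([\gamma'])=[\gamma']$; since $f^*$ restricts to a bijection of the smooth locus of $\mathcal{X}_{SL(2,\mathbb{C})}(S)$ carrying $\varphi$-invariant characters onto $\phi$-invariant ones, $[\gamma']$ is again smooth and $\phi$-invariant, and every $\phi$-invariant smooth character arises this way. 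As $f_*(\alpha_v)$ is conjugate to $\alpha_{f(v)}$, one has $\mathrm{Trace}\,\gamma'(\alpha_v)=\mathrm{Trace}\,\gamma(\alpha_{f(v)})$, so the choice $\theta'_v:=\theta_{f(v)}$ is $\phi$-invariant and yields $\phi$-invariant puncture weights $p'_v=p_{f(v)}$ of exactly the form the construction of $\Lambda^{q_n}_{\phi,\gamma'}$ requires.

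Third, I would compare the representations and intertwiners. If $\rho$ is the irreducible representation of $SK_{q_n^{1/2}}(S)$ attached to $([\gamma],p_v)$, then naturality of the Chebyshev homomorphism ($f_\sharp\circ T^{q^{1/2}}=T^{q^{1/2}}\circ f_\sharp$) gives that $\rho':=\rho\circ f_\sharp$ has classical shadow $[\gamma']$ and puncture weights $p'_v$; by the unicity theorem $\rho'$ is, up to isomorphism, the representation used to define $\Lambda^{q_n}_{\phi,\gamma'}$, and isomorphism does not change the trace after the normalization $|\det|=1$. Now from $\varphi_\sharp=f_\sharp\circ\phi_\sharp\circ f_\sharp^{-1}$ and the defining relation $\rho'\circ\phi_\sharp(Y)=\Lambda^{q_n}_{\phi,\gamma'}\circ\rho'(Y)\circ(\Lambda^{q_n}_{\phi,\gamma'})^{-1}$, substituting $Y=f_\sharp^{-1}(X)$ gives
\[
\rho\circ\varphi_\sharp(X)=\Lambda^{q_n}_{\phi,\gamma'}\circ\rho(X)\circ(\Lambda^{q_n}_{\phi,\gamma'})^{-1}\qquad\text{for all }X,
\]
so $\Lambda^{q_n}_{\phi,\gamma'}$ is an intertwiner for $\rho\circ\varphi_\sharp$ relative to $\rho$. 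Such an intertwiner is unique up to a scalar, and both are normalized to have modulus-one determinant, hence $|\mathrm{Trace}\,\Lambda^{q_n}_{\varphi,\gamma}|=|\mathrm{Trace}\,\Lambda^{q_n}_{\phi,\gamma'}|$. Combining the three steps: for every $\varphi$-invariant datum the limit equals the corresponding limit for the $\phi$-invariant datum $[\gamma']$, which by hypothesis equals $\tfrac{1}{4\pi}vol_{hyp}(M_\phi)=\tfrac{1}{4\pi}vol_{hyp}(M_{\varphi})$; this is Conjecture \ref{conj4.1} for $f\phi f^{-1}$.

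The main obstacle is not conceptual but the careful matching of auxiliary choices: one has to verify that the puncture weights $p_v$ for $\varphi$ and $p'_v$ for $\phi$ can be chosen to correspond under $f$ (using $f_*(\alpha_v)\sim\alpha_{f(v)}$), and that both the Chebyshev homomorphism and the classical-shadow/unicity machinery are natural with respect to the $Mod(S)$-action, so that $\rho\circ f_\sharp$ genuinely is the representation feeding into $\Lambda^{q_n}_{\phi,\gamma'}$. One should also make explicit that ``Conjecture \ref{conj4.1} holds for $\phi$'' is used in the form that it holds for every $\phi$-invariant smooth character and every admissible weight system, which is the natural reading and precisely what the transport needs.
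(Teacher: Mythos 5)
Your proposal is correct and follows essentially the same route as the paper: transport the invariant character and puncture weights along the conjugating diffeomorphism, observe via $\varphi_\sharp=f_\sharp\phi_\sharp f_\sharp^{-1}$ that the two intertwiners coincide up to the allowed normalization, and use the homeomorphism $M_\phi\cong M_{f\phi f^{-1}}$ to match the volumes. The only cosmetic difference is that you pull $\varphi$-data back to $\phi$-data via $f$ while the paper pushes $\phi$-data forward via $f^{-1}$, which also makes explicit (as the paper leaves implicit) that every $\varphi$-invariant smooth character is reached.
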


\begin{proof}
We will use the same notations as in Conjecture \ref{conj4.1}. Let $f^{-1}_{*}$ be the isomorphism from $\pi_1(S)$ to itself induced by $f^{-1}$. Then
 $[\gamma f^{-1}_{*}]$ is  a smooth $f\phi f^{-1}$-invariant character. Set $\theta_v^{'} = \theta_{f^{-1}(v)}$, then
  $\theta_v^{'}$ are invariant under the action of $f\phi f^{-1}$ and
  $${\rm{Trace}}  \gamma f^{-1}_{*}(\alpha_{v}) = {\rm{Trace}}  \gamma (\alpha_{f^{-1}(v)}) = -e^{\theta_{f^{-1}(v)}} - e^{-\theta_{f^{-1}(v)}}
  =-e^{\theta_v^{'}} - e^{-\theta_v^{'}}.$$
 Set $p_v^{'} = e^{\frac{\theta_v^{'}}{n}} + e^{-\frac{\theta_v^{'}}{n}} =
 e^{\frac{\theta_{f^{-1}(v)}}{n}} + e^{-\frac{\theta_{f^{-1}(v)}}{n}} = p_{f^{-1}(v)}$, then
 $$T_n(p_v^{'}) = -{\rm{Trace}}\gamma f^{-1}_{*}(\alpha_{v}).$$

Recall that we use $f_{\sharp}^{-1}$ to denote the isomorphism from $SK_{q^{1/2}}(S)$ to itself induced by $f^{-1}$.
 Let $\rho$ be the irreducible representation associated to $[\gamma]$ and puncture weights $p_v$. Then
 $\rho f_{\sharp}^{-1}$ is  an irreducible representation associated to character
 $[\gamma f^{-1}_{*}]$ and puncture weights $p_v^{'}$.

With the assumption for Conjecture \ref{conj4.1}, we have
 $$\rho\phi_{\sharp}(X) = \Lambda^{q_n}_{\phi,r} \circ \rho(X) \circ ( \Lambda^{q_n}_{\phi,r})^{-1} $$
 for any element $X\in SK_{q^{1/2}}(S)$ and $|\det(\Lambda^{q_n}_{\phi,r})| = 1$.
 Then we get
 \begin{equation*}
 \begin{split}
 \rho f_{\sharp}^{-1}(f\phi f^{-1})_{\sharp}(X) &= \rho f_{\sharp}^{-1}f_{\sharp}\phi_{\sharp} f^{-1}_{\sharp}(X)
 = \rho\phi_{\sharp} f^{-1}_{\sharp}(X)=\Lambda^{q_n}_{\phi,r} \circ \rho(f^{-1}_{\sharp}(X)) \circ ( \Lambda^{q_n}_{\phi,r})^{-1}.
 \end{split}
 \end{equation*}
 Thus we get $\Lambda^{q_n}_{f\phi f^{-1},rf^{-1}_{*}} = \Lambda^{q_n}_{\phi,r}$, and
 \begin{equation*}
 \begin{split}
 &\lim_{n\text{\;} odd \rightarrow \infty} \frac{1}{n} \log |{\rm{Trace}} \Lambda^{q_n}_{f\phi f^{-1},rf^{-1}_{*}}|
 = \lim_{n\text{\;} odd \rightarrow \infty} \frac{1}{n} \log |{\rm{Trace}} \Lambda^{q_n}_{\phi,r}| \\
 = &\frac{1}{4\pi}vol_{hyp}(M_\phi) = \frac{1}{4\pi}vol_{hyp}(M_{f\phi f^{-1}}).
  \end{split}
 \end{equation*}
\end{proof}

From \cite{BW6}, we know Conjecture \ref{conj4.1} holds for $\phi=\begin{pmatrix}
  2 & 1\\
  1 & 1
\end{pmatrix}$.

\begin{cor}
Conjecture \ref{conj4.1} holds for all $f\phi f^{-1}$ where $f$ is any element in $GL(2,\mathbb{Z})$.
\end{cor}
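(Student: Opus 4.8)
The plan is to obtain this as a formal consequence of Lemma \ref{lmm4.16} together with the base case supplied by \cite{BW6}. First I would record that $\phi=\begin{pmatrix}2&1\\1&1\end{pmatrix}$ has trace $3$, hence is a hyperbolic element of $SL(2,\mathbb{Z})$ and acts as a pseudo-Anosov diffeomorphism of the once-punctured torus $S_{1,1}$ — the surface for which the mapping torus of such a monodromy is genuinely hyperbolic (indeed $M_\phi$ is the figure-eight knot complement). By \cite{BW6}, Conjecture \ref{conj4.1} holds for this $\phi$.

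Next I would note that every $f\in GL(2,\mathbb{Z})$ is realized by a diffeomorphism of $S_{1,1}$, via its linear action on $\mathbb{R}^2/\mathbb{Z}^2$ preserving the marked point; together $SL(2,\mathbb{Z})$ and the orientation-reversing classes exhaust $GL(2,\mathbb{Z})$. Because $\phi$ is pseudo-Anosov, the conjugate $f\phi f^{-1}$ is again pseudo-Anosov (as observed just before Lemma \ref{lmm4.16}), so Conjecture \ref{conj4.1} is meaningful for it. Since Lemma \ref{lmm4.16} is stated for an arbitrary diffeomorphism $f$ and an arbitrary pseudo-Anosov $\phi$ for which the conjecture is known, applying it with our $\phi$ and a given $f$ yields the conjecture for $f\phi f^{-1}$; letting $f$ run over $GL(2,\mathbb{Z})$ gives the corollary.

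The only place that calls for a little care — and the closest thing here to an obstacle — is checking that the hypotheses of Lemma \ref{lmm4.16} are met for the orientation-reversing elements of $GL(2,\mathbb{Z})$ as well: one wants such an $f$ to induce an algebra isomorphism $f_\sharp$ of $SK_{q^{1/2}}(S_{1,1})$ compatible with its actions on $\mathcal{X}_{SL(2,\mathbb{C})}(S_{1,1})$ and on the puncture weights $p_v$, which is exactly the data used in the proof of the lemma, and one wants conjugate mapping classes to have homeomorphic mapping tori so that $vol_{hyp}(M_{f\phi f^{-1}})=vol_{hyp}(M_\phi)$ and the right-hand side of the volume formula is unchanged — this last identification is already carried out inside the proof of Lemma \ref{lmm4.16}. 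Beyond this bookkeeping no new computation is required.
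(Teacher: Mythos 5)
Your proposal is correct and is essentially the paper's own argument: the paper derives this corollary directly by combining the base case from \cite{BW6} for $\phi=\begin{pmatrix}2&1\\1&1\end{pmatrix}$ with Lemma \ref{lmm4.16}, exactly as you do. Your additional remark about verifying the hypotheses for orientation-reversing elements of $GL(2,\mathbb{Z})$ is a reasonable point of care that the paper itself passes over silently.
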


Let $\varphi$ be a periodic map for $S$, and let $g$ be any diffeomorphism for $S$. Then $g\varphi g^{-1}$ is also a periodic map.
The same discussion as in Lemma \ref{lmm4.16} implies the following conclusion.

\begin{lmm}\label{lmm4.17}
Let $\varphi$ be any  periodic  map for $S$, and let $g$ be any  diffeomorphism for $S$. If Conjecture \ref{conj4.2} holds for $\varphi$, then
it also holds for $g\varphi g^{-1}$.
\end{lmm}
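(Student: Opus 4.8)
The plan is to repeat, essentially verbatim, the argument proving Lemma~\ref{lmm4.16}, with the pseudo-Anosov $\phi$ replaced by the periodic $\varphi$, the character of Conjecture~\ref{conj4.1} replaced by $\gamma_{\varphi}$ from Conjecture~\ref{conj4.2}, and the target value $\frac{1}{4\pi}vol_{hyp}(M_{\phi})$ replaced by $0$. Write $g^{-1}_{*}$ for the automorphism of $\pi_1(S)$ induced by $g^{-1}$ and $g^{-1}_{\sharp}$ for the induced automorphism of $SK_{q^{1/2}}(S)$. First I would verify that $[\gamma_{\varphi}g^{-1}_{*}]$ is a smooth $g\varphi g^{-1}$-invariant character: invariance is the one-line computation
\[
(g\varphi g^{-1})^{*}[\gamma_{\varphi}g^{-1}_{*}] = [\gamma_{\varphi}g^{-1}_{*}g_{*}\varphi_{*}g^{-1}_{*}] = [\gamma_{\varphi}\varphi_{*}g^{-1}_{*}] = [\gamma_{\varphi}g^{-1}_{*}],
\]
the last equality using $\varphi$-invariance of $[\gamma_{\varphi}]$; and smoothness follows from Proposition~\ref{prop3.1}, since $\gamma_{\varphi}g^{-1}_{*}$ is injective (a composition of an automorphism with the injective $\gamma_{\varphi}$) and $S$ has negative Euler characteristic.

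Next I would transport the puncture data exactly as in Lemma~\ref{lmm4.16}: set $\theta_v' = \theta_{g^{-1}(v)}$ and $p_v' = p_{g^{-1}(v)}$, observe these are $g\varphi g^{-1}$-invariant and satisfy $T_n(p_v') = -{\rm Trace}\,\gamma_{\varphi}g^{-1}_{*}(\alpha_v)$, and note that $\rho\circ g^{-1}_{\sharp}$ is the irreducible representation with classical shadow $[\gamma_{\varphi}g^{-1}_{*}]$ and puncture weights $p_v'$, where $\rho$ is the irreducible representation attached to $\gamma_{\varphi}$ and $p_v$. Using $(g\varphi g^{-1})_{\sharp} = g_{\sharp}\varphi_{\sharp}g^{-1}_{\sharp}$ together with the defining relation of $\Lambda^{q_n}_{\varphi,\gamma_{\varphi}}$ one obtains
\[
\rho g^{-1}_{\sharp}(g\varphi g^{-1})_{\sharp}(X) = \rho\varphi_{\sharp}g^{-1}_{\sharp}(X) = \Lambda^{q_n}_{\varphi,\gamma_{\varphi}}\circ\rho g^{-1}_{\sharp}(X)\circ(\Lambda^{q_n}_{\varphi,\gamma_{\varphi}})^{-1},
\]
so that $\Lambda^{q_n}_{g\varphi g^{-1},\gamma_{\varphi}g^{-1}_{*}}$ may be taken equal to $\Lambda^{q_n}_{\varphi,\gamma_{\varphi}}$ and hence $|{\rm Trace}\,\Lambda^{q_n}_{g\varphi g^{-1},\gamma_{\varphi}g^{-1}_{*}}| = |{\rm Trace}\,\Lambda^{q_n}_{\varphi,\gamma_{\varphi}}|$ for every odd $n$; the limit for $g\varphi g^{-1}$ is therefore $0$ because it is $0$ for $\varphi$.

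The only point needing care — and what I regard as the main (mild) obstacle — is that Conjecture~\ref{conj4.2} is phrased for the distinguished character $\gamma_{g\varphi g^{-1}}$ coming from a fixed point of $g\varphi g^{-1}$ in Teichm\"uller space, not for an arbitrary smooth invariant character. I would close this gap by noting that if $x$ is a point fixed by $\varphi$ in Teichm\"uller space with holonomy $\overline{\gamma_{\varphi}}$, then $g\cdot x$ is fixed by $g\varphi g^{-1}$ and its holonomy is conjugate to $\overline{\gamma_{\varphi}}g^{-1}_{*}$; hence $[\gamma_{\varphi}g^{-1}_{*}]$ is precisely (a lift of) the character the construction assigns to $g\varphi g^{-1}$, and the weights $p_v'$ above agree with the assigned ones (both equal $-(q+q^{-1})$ or $1+1$ according to the sign of ${\rm Trace}\,\gamma(\alpha_v)$, which is unchanged under $\gamma\mapsto\gamma g^{-1}_{*}$). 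Alternatively one may invoke the remark following Lemma~\ref{lmm4.15}, to the effect that the statement is insensitive to the choice of smooth $\varphi$-invariant character. With this identification the computation above yields Conjecture~\ref{conj4.2} for $g\varphi g^{-1}$.
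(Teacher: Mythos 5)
Your proposal is correct and matches the paper's approach exactly: the paper proves Lemma \ref{lmm4.17} simply by stating ``By the same discussion as Lemma \ref{lmm4.16}'', i.e.\ by transporting the character, puncture weights, and intertwiner along $g^{-1}_{*}$ and $g^{-1}_{\sharp}$ just as you do. Your closing paragraph, identifying $[\gamma_{\varphi}g^{-1}_{*}]$ with the distinguished character attached to the fixed point $g\cdot x$ of $g\varphi g^{-1}$ in Teichm\"uller space, addresses a point the paper leaves implicit and is a welcome addition rather than a deviation.
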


The following Theorem shows Conjecture \ref{conj4.2} holds for the once punctured torus. This  confirms the relation between the intertwiner and  the simplicial volume of the corresponding mapping torus.

\begin{thm}\label{thm5.14}
Conjecture \ref{conj4.2} holds for the once punctured torus.
\end{thm}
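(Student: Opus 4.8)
The plan is to bootstrap from the results already proved in this section: Theorem~\ref{thm5.16} handles periodic maps whose order is a power of $2$, Lemma~\ref{lmm4.15} handles two specific mapping classes of orders $3$ and $6$, and Lemma~\ref{lmm4.17} says Conjecture~\ref{conj4.2} is invariant under conjugation of the mapping class. So the theorem follows once I show that every periodic mapping class of $S_{1,1}$ is conjugate to one of these, or to an inverse of one.

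First I would recall that $\mathrm{Mod}(S_{1,1}) = SL(2,\mathbb{Z})$ and that a mapping class is periodic exactly when it has finite order; since $SL(2,\mathbb{Z}) \cong \mathbb{Z}/4 *_{\mathbb{Z}/2} \mathbb{Z}/6$, every finite-order element has order $1,2,3,4$ or $6$ and is conjugate into one of the two maximal finite subgroups, and these maximal subgroups are unique up to conjugacy. Hence, up to conjugacy, the periodic mapping classes are represented by: the identity; $-I$ (order $2$); an order-$4$ element $S=\begin{pmatrix}0&-1\\1&0\end{pmatrix}$ together with $S^{-1}$; the order-$3$ element $\begin{pmatrix}0&1\\-1&-1\end{pmatrix}$ and its inverse; and the order-$6$ element $\begin{pmatrix}1&1\\-1&0\end{pmatrix}$ and its inverse. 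By Lemma~\ref{lmm4.17} it is enough to check Conjecture~\ref{conj4.2} for one element in each of these classes, and $\chi(S_{1,1})=-1<0$ so all the cited results apply.

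Then I would go through the classes. For the identity, $\varphi_{\sharp}=\mathrm{Id}$ forces the normalized intertwiner to be a scalar of modulus one, so $|\mathrm{Trace}\,\Lambda_n| = \dim V = n$ and $\frac{1}{n}\log n\to 0$. For $-I$ and for the order-$4$ classes the order is $2^1$ or $2^2$, so Theorem~\ref{thm5.16} gives the conclusion. For $\begin{pmatrix}0&1\\-1&-1\end{pmatrix}$ and $\begin{pmatrix}1&1\\-1&0\end{pmatrix}$ this is precisely Lemma~\ref{lmm4.15}. Finally, for the inverses of the latter two I would argue that if $\varphi$ is periodic then $\varphi^{-1}$ fixes the same point of Teichm\"uller space, so $\gamma_{\varphi^{-1}}=\gamma_{\varphi}$ and the puncture weights coincide; from $\rho\varphi_{\sharp}(X)=\Lambda_n\rho(X)\Lambda_n^{-1}$ one gets $\rho\varphi^{-1}_{\sharp}(X)=\Lambda_n^{-1}\rho(X)\Lambda_n$, so $\Lambda_n^{-1}$ is an intertwiner for $\varphi^{-1}$ with $|\det\Lambda_n^{-1}|=1$; and since, as in the proof of Theorem~\ref{th4.1}, $\Lambda_n$ is diagonalizable with root-of-unity eigenvalues, $|\mathrm{Trace}\,\Lambda_n^{-1}| = |\overline{\mathrm{Trace}\,\Lambda_n}| = |\mathrm{Trace}\,\Lambda_n|$. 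Hence the conjecture for $\varphi^{-1}$ follows from that for $\varphi$. (Alternatively, conjugating either of the two matrices by the orientation-reversing class $\begin{pmatrix}0&1\\1&0\end{pmatrix}$ yields its inverse, so Lemma~\ref{lmm4.17} already covers these cases.) Assembling the cases proves the theorem.

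The step I expect to need the most care is the conjugacy classification: one must be certain the list of representatives above is exhaustive — in particular that there are no further conjugacy classes of order-$3$ or order-$6$ mapping classes beyond $\begin{pmatrix}0&1\\-1&-1\end{pmatrix}^{\pm1}$ and $\begin{pmatrix}1&1\\-1&0\end{pmatrix}^{\pm1}$ — since these are the only periodic classes not already settled by Theorem~\ref{thm5.16}. Everything else is bookkeeping together with the elementary observation about $\mathrm{Trace}\,\Lambda_n^{-1}$.
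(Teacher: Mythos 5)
Your proposal is correct and follows essentially the same route as the paper: split by order, dispose of order $1$ trivially, invoke Theorem \ref{thm5.16} for orders $2$ and $4$, and reduce orders $3$ and $6$ to the two matrices of Lemma \ref{lmm4.15} via conjugation invariance (Lemma \ref{lmm4.17}). The paper handles the inverse classes implicitly by citing \cite{KO} for $GL(2,\mathbb{Z})$-conjugacy (an orientation-reversing conjugator sends each matrix to its inverse), which is exactly your parenthetical alternative, so your extra argument about $\mathrm{Trace}\,\Lambda_n^{-1}$ is sound but not needed.
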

\begin{proof}
Let $\varphi$ be any periodic map for $S_{1,1}$. Then the order of $\varphi$ could be 1,2,3,4 or 6.
According to Theorem \ref{thm5.16}, Conjecture \ref{conj4.2} holds if the order of $\varphi$ is 2 or 4.

If the order of $\varphi$ is 1, then $\varphi$ is just the identity map. In this case, we can just choose the intertwiner to be the identity operator. Then Conjecture \ref{conj4.2} holds trivially.

We look at the case when the order of $\varphi$ is 3 or 6. For these two cases, we have $|{\rm{Trace}}\varphi| = 1$.
According to \cite{KO}, we know there exists an element $g\in GL(2,\mathbb{Z})$ such that
 $\varphi = g\begin{pmatrix}
  0 & 1\\
  -1 & -1
\end{pmatrix} g^{-1}$ or
$\varphi = g\begin{pmatrix}
  1 & 1\\
  -1 & 0
\end{pmatrix}g^{-1}$. From Lemmas \ref{lmm4.15} and \ref{lmm4.17}, we get Conjecture \ref{conj4.2} holds for these two cases.

\end{proof}

\begin{rem}
From the proof of Theorem \ref{thm5.16}, we know if we can show ${\rm{Trace}} \Lambda^{q_n}_{\varphi,r}\neq 0$ after $n$
is big enough, then we can prove
$$\lim_{n\text{\;} odd \rightarrow \infty} \frac{1}{n} \log |{\rm{Trace}} \Lambda^{q_n}_{\varphi,r_{\varphi}}| = 0.$$
\end{rem}

\begin{rem}
From subsection \ref{newsec}, we know the periodic edge weight system $a = a^{(0)},a^{(1)},\dots, a^{(k)} = a$ for the ideal triangulation sweep $\tau = \tau^{(0)}, \tau^{(1)},\dots, \tau^{(k)} = \varphi(\tau)$ and $\varphi$-invariant puncture weights $h_v$ can give us
the intertwiner $\overline{\Lambda}^{q}_{\varphi,\overline{\gamma}}$ such that $$\overline{\rho}\circ \Phi^{q}_{\tau\varphi(\tau)}\circ \Psi_{\varphi(\tau)\tau}^{q} (X) =
  \overline{\Lambda}^{q}_{\varphi,\overline{\gamma}} \circ \overline{\rho}(X) \circ (\overline{\Lambda}^{q}_{\varphi,\overline{\gamma}})^{-1}$$
  for every $X\in \mathcal{T}^{q}_{\tau}$.

  It is easy to verify that $(\Phi^{q}_{\tau\varphi(\tau)}\circ \Psi_{\varphi(\tau)\tau}^{q})^m =
  (\Phi^{q}_{\tau\varphi^m(\tau)}\circ \Psi_{\varphi^m(\tau)\tau}^{q})$, and
   $$a = a^{(0)},a^{(1)},\dots, a^{(k)},\dots\dots, a^{(0)},a^{(1)},\dots, a^{(k)} = a$$ is the periodic edge weight system for the ideal triangulation sweep
  \begin{align*}
  \tau = \tau^{(0)}, \tau^{(1)},\dots, \tau^{(k)} = \varphi(\tau), \varphi(\tau^{(1)}),\dots, \varphi(\tau^{(k)}) = \varphi^2(\tau),\dots,\\ \varphi^{m-1}(\tau) = \varphi^{m-1}(\tau^{(0)}), \varphi^{m-1}(\tau^{(1)}),\dots, \varphi^{m-1}(\tau^{(k)}) = \varphi^m(\tau)
  \end{align*}
  and $h_{\varphi^{m}(v)} = h_v$.

  Suppose $\varphi$ is periodic with order $m$, then
  $$
  (\overline{\Lambda}^{q}_{\varphi,\overline{\gamma}})^m \circ \overline{\rho}(X) \circ (\overline{\Lambda}^{q}_{\varphi,\overline{\gamma}})^{-m}
  = \overline{\rho}\circ \Phi^{q}_{\tau\varphi^m(\tau)}\circ \Psi_{\varphi^m(\tau)\tau}^{q} (X) = \overline{\rho}(X)$$
  for every $X\in \mathcal{T}^{q}_{\tau}$. Then $(\overline{\Lambda}^{q}_{\varphi,\overline{\gamma}})^m$ is a scalar matrix since $\overline{\rho}$ is irreducible.
  Actually we can choose good $\overline{\Lambda}^{q}_{\varphi,\overline{\gamma}}$ such that $(\overline{\Lambda}^{q}_{\varphi,\overline{\gamma}})^m$ is the identity matrix.
  We have all the eigenvalues of $\overline{\Lambda}^{q}_{\varphi,\overline{\gamma}}$ are $m$-roots of unity, and
   $|\text{Trace}\overline{\Lambda}^{q}_{\varphi,\overline{\gamma}}| = 0$ or $|\text{Trace}\overline{\Lambda}^{q}_{\varphi,\overline{\gamma}}| \geq n^{-m}$.

   From lemma 11 in \cite{BW5}, we know the complex dimension of the space of all  periodic edge weight systems for the fixed ideal triangulation is more than or equal to 1. Thus this space is connected. In a local open subset of this space, we can choose $\varphi$-invariant puncture weights such that these puncture weights  smoothly vary according to periodic edge weight systems. Then we have $|\text{Trace}\overline{\Lambda}^{q}_{\varphi,\overline{\gamma}}|$ smoothly varies according to periodic edge weight systems in a local open subset by using the similar argument in complement 10 in \cite{BW4}.
   Since this space is connected and $0$ is an isolated point in the image, we have $|\text{Trace}\overline{\Lambda}^{q}_{\varphi,\overline{\gamma}}| = 0$ for all periodic edge weight systems with the chosen puncture weights, or $|\text{Trace}\overline{\Lambda}^{q}_{\varphi,\overline{\gamma}}| \geq n^{-m}$ for all periodic edge weight systems with the chosen puncture weights.

   If we can find one periodic edge weight system with the chosen puncture weights such that
   $$\lim_{n\text{\;} odd \rightarrow \infty} \frac{1}{n} \log |\text{Trace}\overline{\Lambda}^{q_n}_{\varphi,\overline{\gamma}}| = 0,$$
   we can conclude that the above equation is true for every periodic edge weight system with the chosen puncture weights.

\end{rem}

\bibliography{reference}

\end{document}